\newcommand{\R}{\mathbb{R}}
\newtheorem*{remark}{Remark}
\newtheorem{theorem}{Theorem}
\newtheorem{lemma}[theorem]{Lemma}
\title{Generalized Affine Scaling Algorithms for Linear Programming Problems}
\author{Md Sarowar Morshed and Md Noor-E-Alam \\ {\small Department of Mechanical \& Industrial Engineering} \\ {\small Northeastern University } \\
{\small 360 Huntington Avenue, Boston, MA 02115, USA} \\
{\small Email : mnalam@neu.edu}}
\date{}
\begin{document}
\maketitle

\section*{Abstract}

Interior Point Methods are widely used to solve Linear Programming problems.  In this work, we present two primal Affine Scaling algorithms to achieve faster convergence in solving Linear Programming problems. In the first algorithm, we integrate Nesterov's restarting strategy in the primal Affine Scaling method with an extra parameter, which in turn generalizes the original primal Affine Scaling method. We provide the proof of convergence for the proposed generalized algorithm considering long step size. We also provide the proof of convergence for the primal and dual sequence without the degeneracy assumption. This convergence result generalizes the original convergence result for the Affine Scaling methods and it gives us hints about the existence of a new family of methods. Then, we introduce a second algorithm to accelerate the convergence rate of the generalized algorithm by integrating a non-linear series transformation technique. Our numerical results show that the proposed algorithms outperform the original primal Affine Scaling method.



\textit{\textbf{Key words:}} Linear Programming, Affine Scaling, Nesterov Acceleration, Dikin Process, Shanks Series Transformation

\maketitle

%


\section{Introduction}
\label{sec:into}

The \textit{Affine Scaling} (AFS) algorithm was introduced by Dikin \cite{dikin:1967}, which remained unnoticed to the \textit{Operations Research} (OR) community until the seminal work of Karmarkar \cite{karmarkar:1984}. Karmarkar's work transformed the research in \textit{Interior Point Methods} (IPMs) and induced a significant development in the theory of IPMs. As a result, several variants of AFS have been studied over the years by researchers (see \cite{jansen:1996}, \cite{barnes:1986}). We refer to the books of Wright \cite{wright:1997}, Ye \cite{ye:1997}, Bertsimas \cite{bertsimas:1997} and Vanderbei \cite{vanderbei:1998} for more comprehensive discussion of these methods.

Apart from the simplicity, convergence analysis of the AFS methods for generalized degenerate setup are considered difficult to analyze. Dikin first published a convergence proof with a non-degeneracy assumption in 1974 \cite{vanderbei:1990}. Both Vanderbei \textit{et al.} \cite{vanderbei:1986} and Barnes \cite{barnes:1986} gave simpler proofs in their global convergence analysis but still assumed primal and dual non-degeneracy. First attempt to break out of the non-degeneracy assumption was made by Adler \textit{et al.} \cite{adler:1991}, who investigated the convergence of continuous trajectories of primal and dual AFS. Subsequently, assuming only dual non-degeneracy, Tsuchiya \cite{tsuchiya:19921} showed that under the condition of step size $\alpha < \frac{1}{8}$, the long-step version of AFS converges globally. In another work, Tsuchiya \cite{tsuchiya:1991} showed that the dual non-degeneracy condition is not a necessary condition for the convergence as assumed previously \cite{tsuchiya:19921}. Moreover, Tsuchiya \cite{tsuchiya:1991} introduced the idea of potential function, a slightly different function than the one provided by Karmarkar \cite{karmarkar:1984}, for the analysis of the local behavior of the AFS near the boundary of the feasible region. Finally, using that potential function \cite{tsuchiya:1991}, Dikin \cite{dikin:1991} and Tsuchiya \textit{et al.} \cite{tsuchiya:1992} provided proofs for the global convergence of degenerate \textit{Linear Programming} (LP) problems with $\alpha \footnote{$\alpha$ is the step size} < \frac{1}{2}$  and $\alpha \leqslant \frac{2}{3}$, respectively. Later, Hall \textit{et al.} \cite{hall:1993} showed that the sequence of dual estimates will not always converge for $\alpha > \frac{2}{3}$. 

As a self-contained paper for a global convergence analysis for AFS, Monteiro \textit{et al.} \cite{monteiro:1993} and Saigal \cite{saigal:1996} provided two simple proofs for the long-step AFS algorithms of degenerate LP's. Subsequently, Saigal \cite{saigal:1997} introduced two step predictor corrector based methods to fasten the convergence of AFS. Besides, the chaotic analysis of AFS was first addressed by Castillo \textit{et al.} \cite{barnes:2001}. Bruin \textit{et al.} \cite{ross:2014} provided a proper chaotic explanation of the so called \textit{Dikin Process} by showing the similarity of it with the logistic family in terms of chaotic behavior. In their work, they showed why the AFS algorithms behave differently when the step size $\alpha$ is close to $\frac{2}{3}$, which in general complies with the chaotic behavior of IPMs analyzed by several other researchers. There has been a significant development in applying the AFS techniques to solve various types of optimization problems: Semi Definite Programming \cite{vanderbei:1999}, Nonlinear Smooth Programming \cite{wang:2009}, Linear Convex Programming \cite{cunha:2011}, Support Vector Machine \cite{maria:2011}, Linear Box Constrained Optimization \cite{wang:2014}, Nonlinear Box Constrained Optimization \cite{huang:2017}. Recently, Kannan \textit{et al.} \cite{kannan:2012} applied the idea of AFS algorithm to generate a random walk for solving LP problems approximately.

In a seminal work, Nesterov \cite{nesterov:1983} proposed an acceleration technique for the \textit{Gradient Descent} that exhibits the worst-case convergence rate of $O(\frac{1}{k^2})$ for minimizing smooth convex functions compared to the original convergence rate of $O(\frac{1}{k})$. Since the inception of Nesterov's work, there has been a body of work done on the theoretical development of first-order accelerated methods (for a detailed discussion see \cite{nesterov:2005}, \cite{nesterov:2013} and \cite{nesterov:2014}). Furthermore, an unified summary of all of the methods of Nesterov can be found in \cite{tseng:2010}.  Recently, Su \textit{et al.} \cite{su:2016} carried out a theoretical analysis on the methods of Nesterov and showed that it can be interpreted as a finite difference approximation of a second-order \textit {Ordinary Differential Equation} (ODE).

\subsection{\textbf{Motivation \& Contribution}}
\label{subsec:motiv}

We have seen from the literature that Nesterov's restarting scheme is very successful in achieving faster convergence for \textit{Gradient Descent} algorithms. However, to the best of our knowledge, the potential opportunity of Nesterov's acceleration has not been yet explored to IPMs to solve LP problems. Motivating by the power of acceleration and to fill the research gap, as a first attempt, in this work we apply acceleration techniques in the Affine Scaling method. Affine Scaling method was chosen for the following three reasons:
\begin{itemize}
    \vspace{-0.2 cm} \item \textit{Historical Importance:} Affine Scaling method was the first discovered Interior Point algorithm discovered by Dikin in 1967 \cite{dikin:1967}. In 1984 Karmarkar introduced potential reduction method (Karmarkar method) based on Dikin's original idea \cite{karmarkar:1984}. After that a significant amount of research was done in the field of IPM algorithms. Dikin's original Affine Scaling method worked as a stepping stone for the development of further IPM algorithms for solving LP problems.
    \vspace{-0.2 cm} \item \textit{Algorithmic Simplicity:} Affine Scaling method uses the simplest primal update whereas Path following/Barrier method and Karmarkar method use penalty function driven update formulas \cite{bertsimas:1997} (i.e., both Barrier method and Karmarkar method have different objectives in the EAP problem shown in equation \ref{eq:n} of Section \ref{sec:conc}). Note that in the EAP problem (equation \eqref{eq:2} and \eqref{eq:n}) Affine Scaling method uses the objective function $c^Td$, whereas Karmarkar method uses the objective $ G(x+d,s)$  and Barrier method uses the objective $ B_\mu (x+d)$ ( where $ G(x,s)$ and $ B_\mu (x)$ are the potential function and barrier function respectively, see Section \ref{sec:conc} for details).
    \vspace{-0.2 cm} \item \textit{Generalized Dikin Process:} The chaotic behavior of Affine Scaling method can be explained by the so called \textit{Dikin Process} \cite{ross:2014}, which has a similarity to the logistic map in \textit{Chaos Theory}. From a \textit{Chaos Theory} perspective, our proposed algorithms may reveal a more generalized versions of \textit{Dikin Process}, which can be represented as dynamic systems (\cite{adler:1991}, \cite{barnes:2001}, \cite{ross:2014}).
\end{itemize}
Based on the above discussion in this work, we propose two algorithms: 1) Generalized AFS and 2) Accelerated AFS (see Section \ref{sec:afs}). In the Generalized AFS algorithm, we propose to use the acceleration scheme discovered by Nesterov \cite{nesterov:1983,nesterov:2013,nesterov:2014,nesterov:2005,nesterov:2012} in the AFS framework to achieve super linear convergence rate as compared to the linear rate shown in \cite{Tsuchiya1996}. Note that, Nesterov's accelerated scheme can be incorporated with Affine Scaling scheme by defining two new sequences $\{u_k\}$ and $\{v_k\} \ \footnote{Representation of this update formula is different than Algorithm 1 but they implies the same steps}$ as following:
\begin{align}
\label{eq:nes}
 & u_k  = \alpha_k v_k + (1-\alpha_k) x_k \quad U_k = \textbf{diag} \left[(u_k)_1, (u_k)_2, ... (u_k)_n\right] \nonumber \\
 & \bar{y_k} = \left(AU_k^2A^T\right)^{-1}AU_k^2c, \ \ \bar{s_k} = c- A^T\bar{y_k} \nonumber\\
 & x_{k+1}  = u_k - \theta_k  \frac{U_k^2\bar{s_k}}{\|U_k\bar{s_k}\|}\\
 & v_{k+1} = \beta_k v_k + (1- \beta_k) u_k - \gamma_k \frac{U_k^2\bar{s_k}}{\|U_k\bar{s_k}\|} \nonumber
\end{align}
In equation (\ref{eq:nes}), instead of using $\nabla f$ as in standard \textit{Gradient Descent} we use $\frac{U_k^2\bar{s_k}}{\|U_k\bar{s_k}\|}$ and $\theta_k$ is the step-size. The main contribution for the above scheme is that it uses appropriate values for the parameters $\alpha_k, \beta_k$ and $\gamma_k$ \footnote{Used as a standard notation for Nesterov's method, throughout the paper we used the same notations with new definition}, which in turn yield better convergence in the context of standard Affine Scaling methods. Surprisingly, the generalized AFS algorithm also follows the same types of convergence scheme as the original AFS method (i.e., feasible step size for convergence is $\alpha \leq \frac{2}{3}$ for AFS, generalized AFS shows the same types of convergence $\alpha + \beta \leq \frac{2}{3}$), which leads to a more generalized method. We then provide a generalized proof of convergence of the proposed generalized AFS under sufficient conditions. To gain further acceleration, in the generalized AFS algorithm, we propose to exploit the entry-wise \textit{Shanks Series Transformation} (SST) to the generalized update of the generalized AFS algorithm. We then carry out rigorous convergence analysis to provide guarantee for  acceleration and show the effectiveness of the proposed algorithms through numerical experiments.

This is the first time, Nesterov's momentum method and SST are used to design better algorithms in the context of general IPMs. We believe our proposed acceleration scheme will facilitate the application of such acceleration techniques in Barrier method/Path following method and Karmarkar method. This scheme will also serve as a foundation for developing accelerated techniques for other more efficient but complex methods in the IPM family. In terms of theoretical contribution, our proposed algorithms reveal interesting properties about the convergence characteristics of Affine Scaling method. Based on our analysis, it is evident that the convergence criterion for AFS '$\alpha \leq 2/3$' is a universal bound as the proposed algorithms satisfy a more generalized bound '$\alpha + \beta \leq 2/3$'. Finally, the proposed algorithms suggest availability of a more general family of numerically efficient and theoretically interesting AFS methods.

The paper is organized as follows. In Section 2, we provide a preliminary idea of the original AFS algorithm, then we describe the proposed variant AFS algorithms. In Section 3, we show the convergence of primal sequence for the proposed algorithms. In section 4, we exploit the convergence rate of the accelerated AFS algorithm. In section 5, we present convergence of the dual sequence under sufficient conditions for both algorithms. In section 6, we present numerical results to demonstrate the effectiveness of the proposed algorithms. Finally, in Section 7, we conclude the paper with future research directions.

\section{Generalization of AFS}
\label{sec:afs}

Affine Scaling method uses the simple idea of reformulation, instead of minimizing over the whole interior, it generates a series of ellipsoids inside the interior of feasible region and moves according to \textit{min.} cost direction. Consider the following standard LP and its dual,
\begin{align}
\label{eq:1}
\textbf{(P):} \quad \min \ \ & c^Tx          &  \textbf{(D):} \quad  \max \ \ & y^Tb             \nonumber\\
 & Ax= b         &  & A^Ty +s = c  \\
& x \geqslant 0   &  & s \geqslant 0     \nonumber    
\end{align}

Let $P = \left\{x \ | \ Ax = b, x \geqslant 0\right\}$ be the primal feasible set, then we call the set $\left\{x \in P \ | \ x > 0\right\}$ as the interior of $P$ and its elements as interior points. The basic idea of Affine Scaling method is that instead of minimizing over $P$, we solve a series of optimization problems over ellipsoids. Starting from an initial strictly feasible solution $x_0 >0$, we form an ellipsoid $S_0$ centered at $x_0$, which is contained in the interior of $P$. Then by minimizing $c^Tx$ over all $x \in S_0$, we find a new interior point $x_1$ and proceed in a similar way until stopping restrictions are satisfied. The AFS method can be easily formulated as the following problem: given a strictly feasible solution $x \in \R^n$, we need to find direction vector $d$ such that $\bar{x} = x+ \alpha d$ for some $\alpha \in (0,1)$ \footnote{$\alpha$ is the step size} and it holds $\bar{x} \in P, \ c^T\bar{x} \leqslant c^Tx$. 

\noindent To integrate acceleration and generalization in the AFS, we have proposed the following two algorithms which are variant of original AFS algorithm:
\begin{enumerate}
\item \textbf{Generalized AFS algorithm (\textbf{GAFS})} 
\item \textbf{Accelerated AFS algorithm (\textbf{AAFS})} 
\end{enumerate}

In \textbf{GAFS}, we propose to use Nesterov's restarting strategy with the original AFS to generalize AFS. To facilitate the convergence process of \textbf{GAFS}, we propose to use entry-wise \textit{Shanks series transformation} (SST) introduced by Shanks \cite{shanks:1965} to \textbf{GAFS}. This integrated algorithm is referred as \textbf{AAFS} in this work. We explain details about these two algorithms below:

\textbf{GAFS:} We followed the Nesterov's restarting strategy and introduced a generalized version of AFS in a way that it will give us the original AFS in the absence of the other parameter. For doing so, we integrate an extra term from the original idea of Nesterov \cite{nesterov:1983}. Here this variant of AFS is refereed as GAFS. In the GAFS, we consider two strict feasible points $x,z$ with $ \ c^Tx < c^Tz $, instead of one point $x$ to find a direction vector $d \in \R^n$ such that $\bar{x} = x+ \alpha d+ \bar{\beta} (x-z)$ for some $\alpha, \beta \in (0,1), \ \bar{\beta} = \frac{\beta}{\|1-X^{-1}z\|_{\infty}}, \ X = \text{diag}(x) $, where $\beta$ is the generalization parameter. It allowed us to reformulate the problem as below:
\begin{align}
\label{eq:2}
\textbf{min} \quad & w = c^Td \nonumber\\
\textbf{s.t} \ & Ad =0 \\
& \| X^{-1}d \| \leqslant \alpha  \nonumber
\end{align}
The above problem is known as \textit{Ellipsoidal Approximating Problem} (EAP), see \cite{saigal:1996} and \cite{tsuchiya:1995} for more detailed information. 

\begin{algorithm*}
\caption{Generalized Affine Scaling Algorithm (\textbf{GAFS})}\label{alg:acc AFS}
\begin{algorithmic}[1]
\State Start with $A,\ b,\ c,\ \epsilon > 0, \ \alpha ,\beta \in (0,1), \ x_0 > 0,  \ k =0$
\Repeat
\begin{align*}
& X_k = \textbf{diag} \left[(x_k)_1, (x_k)_2, ... (x_k)_n\right] \\
& y_k = \left(AX_k^2A^T\right)^{-1}AX_k^2c, \ \ s_k = c- A^Ty_k \\ 
& z_k =
\begin{cases}
\ x_k, & k =0\\
\ x_k + \beta \frac{\delta(x_k)}{\|X_k^{-1}\delta(x_k)\|_{\infty}}, & k > 0
\end{cases}
\end{align*}
\If{$s_k \geq 0$ and $e^T X_k s_k < \epsilon$}
\State \textbf{Stop}; the current $(x_k, y_k)$ are primal and dual $\epsilon$ optimal.
\ElsIf{$ X_k^2s_k \leq 0$}
\State \textbf{Stop}; the problem is unbounded
\Else
\begin{align*}
x_{k+1} = z_k - \alpha \frac{X_k^2s_k}{\|X_ks_k\|}
\end{align*}
\EndIf \Until{\textbf{repeat}}
\end{algorithmic}
\end{algorithm*}

\begin{algorithm*}
\caption{Accelerated Affine Scaling Algorithm (\textbf{AAFS})}\label{alg:shank AFS}
\begin{algorithmic}[1]
\State Start with $A,\ b,\ c,\ \epsilon > 0, \ \alpha \in (0,1), \ x_0 > 0,  \ k =0$
\Repeat
\begin{align*}
& X_k = \textbf{diag} \left[(x_k)_1, (x_k)_2, ... (x_k)_n\right] \\
& y_k = \left(AX_k^2A^T\right)^{-1}AX_k^2c, \ \ s_k = c- A^Ty_k \\
& z_k =
\begin{cases}
\ x_k, & k =0\\
\ x_k + \beta \frac{\delta(x_k)}{\|X_k^{-1}\delta(x_k)\|_{\infty}}, & k > 0
\end{cases} \\
& (B(x_k))_j = \begin{cases}
\ (x_k)_j, & k = 0, 1 \ j = 1,2,...n \\
\  (x_k)_j - \frac{((x_k)_j-(x_{k+1})_j)^2}{(x_k)_j-2(x_{k+1})_j+(x_{k+2})_j} , & k > 1, \  j = 1,2,...n
\end{cases} \\ 
& B_k = \textbf{diag} \left[B(x_k)_1, B(x_k)_2, ... B(x_k)_n\right]
\end{align*}
\If{$s_k \geq 0$ and $e^T B_ks_k < \epsilon$}
\State \textbf{Stop}; the current $(B(x_k), y_k)$ are primal and dual $\epsilon$ optimal.
\ElsIf{$ X_k^2s_k \leq 0$}
\State \textbf{Stop}; the problem is unbounded
\Else
\begin{align*}
x_{k+1} = z_k - \alpha \frac{X_k^2s_k}{\|X_ks_k\|}
\end{align*}
\EndIf \Until{\textbf{repeat}}
\end{algorithmic}
\end{algorithm*}

Since the generalization parameters do not affect the EAP problem, the main properties discussed in \cite{dikin:1991}, \cite{saigal:1996} and \cite{tsuchiya:1995} are valid for \eqref{eq:2} (see \ref{appendix-sec2} for more details). From now on, we denote $\delta(x_k) = x_k-x_{k-1}$. For all $ k \geqslant 0$, we constructed the sequence $z_k$ using the following update with $\alpha, \beta \in (0,1)$ and a strictly feasible point $x_0 > 0$ :
\begin{align}
\label{eq:3a}
z_k =
\begin{cases}
\ x_k, & k =0 \\
\ x_k + \beta \frac{\delta(x_k)}{\|X_k^{-1}\delta(x_k)\|_{\infty}}, & k > 0
\end{cases}
\end{align}
When the stopping criteria is not satisfied ($k \geqslant 0$), $x_{k+1}$ can be calculated using the following formula:
\vspace{-0.4 cm}
\begin{align}
\label{eq:3}
x_{k+1} = z_k - \alpha \frac{X_k^2s_k}{\|X_ks_k\|}
\end{align}
\textbf{AAFS:} In the AAFS, we integrated the SST with the GAFS to gain acceleration. Since the primal sequence generated by the GAFS converges as $ i $ goes to infinity (i.e.,  $\lim_{i \rightarrow \infty} x_i = x^*$, see Section \ref{sec:primal}), it allowed us to  write the following equation:
\begin{align*}
x^*-x_0 = \sum\limits_{i = 0}^{\infty} (x_{i+1}-x_i)
\end{align*}
We denoted the entry-wise partial sum of the right hand side of above equation as $C_{k,j}$ as follows: 
\begin{align*}
C_{k,j} = \sum\limits_{i =0}^{k} (x_{i+1}-x_i)_j
\end{align*}
We see that $C_{k,j}+ (x_0)_j$ converges to $(x^*)_j$ as $k$ goes to infinity for all $j = 1,2,..,n$. This setup allowed us to introduce the entry-wise SST to the sequence $x_k$ generated by the GAFS. In the above algorithm, we define $(B(x_k))_j$ for all $k \geqslant 0$ and $j=1,2...,n$ as follows:
\begin{align}
\label{eq:4}
(B(x_k))_j \overset{\underset{\mathrm{def}}{}}{=}
\begin{cases}
\ (x_k)_j, & k = 0, 1 \\
\ \Big | (x_k)_j - \frac{((x_k)_j-(x_{k+1})_j)^2}{(x_k)_j-2(x_{k+1})_j-(x_{k+2})_j} \Big |, & k > 1
\end{cases}
\end{align}
As $(x_k)_j$ is approximated by $(B(x_k))_j$ for all $ j = 1,2,..,n$, we can modify the stopping criteria of AAFS algorithm with $e^T B_ks_k$.

\section{Convergence of the primal sequence}
\label{sec:primal}
In this section, we provided proof of convergence for the primal sequence $\{x_k\}$ generated by the GAFS and AAFS algorithms discussed in Section 2. We used some Lemmas related to the properties of the sequences $\{x_k\}, \{y_k\}, \{s_k\}, \{d_k \} =\{X_k^2s_k\}$ generated by the GAFS provided in the \ref{appendix-sec2} for proving the next few Theorems of this section. 

We made the following assumptions before providing the proof of convergence of the primal sequence $\{x_k\}$ and the cost function sequence $\{c^Tx_k\}$:
\begin{itemize}
     \vspace{-0.15 cm} \item The \textit{Linear Program} \eqref{eq:1} has at least one interior point feasible solution.
    \vspace{-0.2 cm} \item The objective function $c^Tx$ is not constant over the feasible region of \eqref{eq:1}.
    \vspace{-0.2 cm} \item The matrix $A$ has rank $m$.
    \vspace{-0.2 cm} \item The \textit{Linear Program} has an optimal solution. 
\end{itemize}
\begin{remark}
\label{rem-1}
Note that, here we didn't assume primal and dual non-degeneracy of the LP problem \eqref{eq:1}.
\end{remark}
For step size selection, we considered three well-known function defined for a vector $u$ as
\[\gamma(u) = \max \left\{ u_i \ | \ u_i > 0\right\}, \quad \|u\|_{\infty} = \max_i |u_i|, \quad \|u\|_2 = \sqrt{\sum u_i^2}\]
Whereas the second and third terms are $l_{\infty}$ and $l_2$ norm, respectively. The 1st function is not a norm and not well defined as $\gamma(u)$, is undefined for a non-positive vector $u \leqslant 0$. The following relationship holds:
\[\gamma(u) \leqslant \|u\|_{\infty} \leqslant \|u\|_2\]
For the generalization term, we considered only $l_{\infty}$ and $l_2$ norm as the first function is undefined for some cases, since there is no guarantee that $X_k^{-1}\delta(x_k) \geqslant 0$ will always hold for all $k \geqslant 1$. For our analysis, we select long-step size and long generalization parameter, i.e., we redefine the update formula \eqref{eq:3} for $k \geqslant 1$ as follows:
\begin{align}
\label{eq:5}
x_{k+1} \overset{\underset{\mathrm{def}}{}}{=} x_k - \alpha \frac{X_k^2s_k}{\gamma(X_ks_k)} + \beta \frac{\delta(x_k)}{\|X_k^{-1}\delta(x_k)\|_{\infty}}; \ \ x_1 \overset{\underset{\mathrm{def}}{}}{=} x_0- \alpha \frac{X_0^2s_0}{\gamma(X_0s_0)}
\end{align}
Now, by defining $\alpha_k \overset{\underset{\mathrm{def}}{}}{=} \frac{\alpha}{\gamma(X_ks_k)}, \ \beta_k \overset{\underset{\mathrm{def}}{}}{=} \frac{\beta}{\|X_k^{-1}\delta(x_k)\|_{\infty}}$, we get the modified update formula as follows:
\begin{align}
\label{eq:6}
x_{k+1} = x_k - \alpha_k X_k^2s_k + \beta_k \delta(x_k); \ \ x_1 = x_0- \alpha_0 X_0^2s_0
\end{align}
Let us assume $\lim_{k \to \infty} x_k = x^*$, then define sequences $\{u_k\}, \{\gamma_k\},  \{r_k\}$ and  $\{p_k\}$ as follows:
\begin{align}
\label{eq:7}
u_k \overset{\underset{\mathrm{def}}{}}{=} \frac{X_ks_k}{c^Tx_k-c^Tx^*} , \ \ \gamma_k \overset{\underset{\mathrm{def}}{}}{=} \prod_{j=1}^{k} \beta_k, \ \ r_k \overset{\underset{\mathrm{def}}{}}{=} \frac{\gamma_k}{c^Tx_k-c^Tx^*}, \ \ p_k \overset{\underset{\mathrm{def}}{}}{=} \gamma_k X_k^{-1}e
\end{align}

\begin{theorem}
\label{theorem-1} 
The sequences $\{x_{k+1}\}$ and $ \{x_k\}$ generated by the GAFS algorithm satisfy the following two identities for all $k \geqslant 0$:
\begin{align*}
& \frac{c^Tx_{k+1}- c^Tx^*}{c^Tx_{k}- c^Tx^*} = 1- \alpha \sum\limits_{j=0}^{k} \frac{\|u_j\|^2}{\gamma(u_j)}\frac{r_k}{r_j} \\
 & \left(X_k^{-1}x_{k+1}\right)_j \ = \   \frac{(x_{k+1})_j}{(x_{k})_j} \ = \ 1- \alpha \sum\limits_{i=0}^{k} \frac{(u_i)_j}{\gamma(u_i)}\frac{(p_k)_j}{(p_i)_j} 
\end{align*}
\end{theorem}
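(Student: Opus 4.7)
The plan is to reduce both identities to the same first-order linear recurrence driven by the GAFS update \eqref{eq:6} and then solve it in closed form by induction. First I would apply $c^T$ to \eqref{eq:6}; since $AX_k^2 s_k = 0$ (the projection identity for the affine scaling direction), one gets $c^T X_k^2 s_k = s_k^T X_k^2 s_k = \|X_k s_k\|^2$, and subtracting $c^Tx^*$ on both sides produces
\begin{align*}
f_{k+1} - f_k \;=\; \beta_k(f_k - f_{k-1}) \;-\; \alpha_k \|X_k s_k\|^2, \qquad f_k \overset{\mathrm{def}}{=} c^T x_k - c^T x^*,
\end{align*}
for $k\geq 1$, together with the base case $f_1 - f_0 = -\alpha_0\|X_0 s_0\|^2$ coming from the separate definition of $x_1$.

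Next I would set $\Delta_k = f_{k+1}-f_k$ and unroll the recurrence by induction on $k$, obtaining
\begin{align*}
\Delta_k \;=\; -\sum_{j=0}^{k} \Bigl(\prod_{i=j+1}^{k}\beta_i\Bigr)\, \alpha_j \|X_j s_j\|^2 \;=\; -\gamma_k\sum_{j=0}^{k} \frac{\alpha_j\|X_j s_j\|^2}{\gamma_j},
\end{align*}
using $\prod_{i=j+1}^{k}\beta_i = \gamma_k/\gamma_j$ with the convention $\gamma_0=1$. Substituting $\alpha_j = \alpha/\gamma(X_j s_j)$ and $X_j s_j = f_j u_j$ from \eqref{eq:7} gives $\alpha_j\|X_j s_j\|^2 = \alpha f_j \|u_j\|^2/\gamma(u_j)$; dividing $\Delta_k$ by $f_k$ and recognising $\gamma_k f_j/(\gamma_j f_k) = r_k/r_j$ yields the first identity $f_{k+1}/f_k = 1 - \alpha\sum_{j=0}^{k}\frac{\|u_j\|^2}{\gamma(u_j)}\frac{r_k}{r_j}$.

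For the second identity, I would read off the $j$-th coordinate of \eqref{eq:6}, which has exactly the same structure as the scalar recurrence above but with $\|X_k s_k\|^2$ replaced by $(x_k)_j^2 (s_k)_j$. The same induction gives
\begin{align*}
(x_{k+1})_j - (x_k)_j \;=\; -\gamma_k\sum_{i=0}^{k} \frac{\alpha_i (x_i)_j^2 (s_i)_j}{\gamma_i}.
\end{align*}
Writing $(x_i)_j^2 (s_i)_j = (x_i)_j (u_i)_j f_i$ and $\alpha_i = \alpha/(\gamma(u_i)f_i)$ collapses the summand to $\alpha (x_i)_j (u_i)_j/\gamma(u_i)$; dividing by $(x_k)_j$ and using $(p_k)_j = \gamma_k/(x_k)_j$, so that $\gamma_k(x_i)_j/(\gamma_i (x_k)_j) = (p_k)_j/(p_i)_j$, delivers the claim.

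The only non-routine step is the very first one, namely recognising that the affine scaling direction satisfies $AX_k^2 s_k = 0$ so that the inertial term $\beta_k\delta(x_k)$ decouples cleanly from the projection; once that is in hand, everything reduces to solving one inhomogeneous linear recurrence with variable multiplier $\beta_k$ and repackaging the telescoping product through the definitions of $\gamma_k$, $r_k$, $p_k$. I do not expect any convergence or sign issue, since Theorem \ref{theorem-1} is a purely algebraic identity that holds term-by-term along the sequence.
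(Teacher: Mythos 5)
Your proposal is correct and follows essentially the same route as the paper: both apply $c^T$ (resp.\ read off coordinates) in the update \eqref{eq:6}, use $c^TX_k^2s_k=\|X_ks_k\|^2$, unroll the two-term recurrence so the products of the $\beta_i$ telescope into $\gamma_k/\gamma_j$, and then repackage via the definitions of $r_k$ and $p_k$. The only cosmetic difference is that you solve for $\Delta_k=f_{k+1}-f_k$ in one pass, whereas the paper first derives the closed form for $f_k-f_{k-1}$ and then applies the one-step recurrence once more; the computations are identical.
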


\begin{proof}
Taking inner product with $c$ in both sides of \eqref{eq:6} and using the definitions from \eqref{eq:7}, we can find the following relationship:
\begin{align}
c^Tx_{k}-c^Tx_{k-1} & = -\alpha_{k-1} \|X_{k-1}s_{k-1}\|^2 + \beta_{k-1} (c^Tx_{k-1}-c^Tx_{k-2}) \nonumber \\
& = -\alpha_{k-1} \|X_{k-1}s_{k-1}\|^2 - \beta_{k-1} \alpha_{k-2} \|X_{k-2}s_{k-2}\|^2 \nonumber \\  
& \ \ \ + \beta_{k-1} \beta_{k-2} (c^Tx_{k-2}-c^Tx_{k-3}) \nonumber \\
& \vdots \nonumber \\
& = \gamma_{k-1} (c^Tx_1-c^Tx_0) - \sum\limits_{j=1}^{k-1} \frac{\gamma_{k-1}}{\gamma_j} \alpha_j \|X_js_j\|^2  \nonumber \\
& = -\alpha_0 \gamma_{k-1} \|X_0s_0\|^2 - \sum\limits_{j=1}^{k-1} \frac{\gamma_{k-1}}{\gamma_j} \alpha_j \|X_js_j\|^2 \nonumber \\
& =  - \alpha \sum\limits_{j=0}^{k-1} \frac{\|u_j\|^2}{\gamma(u_j)}\frac{\gamma_{k-1}}{\gamma_j} (c^Tx_j-c^Tx^*) \label{eq:10}
\end{align}
Now using the update formula \eqref{eq:6}, the definition \eqref{eq:7} and equation \eqref{eq:10}, we find the following equation,
\begin{align}
\frac{c^Tx_{k+1}- c^Tx^*}{c^Tx_{k}- c^Tx^*} & = 1 - \alpha_k \ \frac{c^TX_k^2s_k}{c^Tx_k-c^Tx^*} - \beta_k \frac{c^Tx_{k-1}-c^Tx_{k}}{c^Tx_{k}- c^Tx^*} \nonumber \\
& = 1 -\alpha \frac{\|u_k\|^2}{\gamma(u_k)} - \alpha \sum\limits_{j=0}^{k-1} \frac{\|u_j\|^2}{\gamma(u_j)}\frac{\beta_k\gamma_{k-1}}{\gamma_j} \frac{c^Tx_j-c^Tx^*}{c^Tx_k-c^Tx^*} \\
& = 1- \alpha \sum\limits_{j=0}^{k} \frac{\|u_j\|^2}{\gamma(u_j)}\frac{r_k}{r_j} \label{eq:11}
\end{align}
The above equation \eqref{eq:11} proves part (1) of Theorem \ref{theorem-1}. Similarly, using equation \eqref{eq:6} and \eqref{eq:7}, we have,
\begin{align}
x_{k+1}-x_k & = -\alpha_k X_k^2s_k + \beta_k \delta(x_k) \nonumber\\
& = -\alpha_k X_k^2s_k - \beta_k \alpha_{k-1} X_{k-1}^2s_{k-1} + \beta_k \beta_{k-1} \delta(x_{k-1}) \nonumber \\
& \vdots \nonumber \\
& = \gamma_k \delta(x_1) - \sum\limits_{j=1}^{k} \frac{\gamma_k}{\gamma_j} \alpha_j X_j^2s_j \ = - \alpha \sum\limits_{j=0}^{k}\frac{\gamma_k}{\gamma_j} \frac{X_ju_j}{\gamma(u_j)} \label{eq:12}
\end{align}
Then, multiplying both sides of \eqref{eq:12} by $X_k^{-1}$ and after simplification, we have for all $j =1,2,...,n.$,
\begin{align}
\left(X_k^{-1}x_{k+1}\right)_j \ = \   \frac{(x_{k+1})_j}{(x_{k})_j} \ &= \ 1 - \alpha \sum\limits_{i =0}^{k}\frac{(u_i)_j}{\gamma(u_i)}\frac{\gamma_k(x_i)_j}{\gamma_i (x_k)_j} \nonumber \\
& = \ 1- \alpha \sum\limits_{i=0}^{k} \frac{(u_i)_j}{\gamma(u_i)}\frac{(p_k)_j}{(p_i)_j} \label{eq:13}
\end{align}
The above equations \eqref{eq:11} and \eqref{eq:13} prove part (1) and part (2) of Theorem \ref{theorem-1}, respectively.
\end{proof}
\noindent Now, for the rest of our analysis let us define the set $Q$ as below:
\[Q \overset{\underset{\mathrm{def}}{}}{=} \{(\alpha, \beta) | \ 0 < \alpha < 1 ,\  0 \leqslant \beta < \frac{1}{\phi}, \ \alpha +\beta \leqslant \frac{2}{3} \}\]
Where $\phi = 1.618...$ is the so called golden ratio.
\begin{theorem}
\label{theorem-2}
For $\alpha, \beta \in Q$, starting from a strictly feasible point $x_0$, the sequence $x_{k+1}$ generated by the update formula \eqref{eq:6} has the following three properties for all $k \geqslant 0$:
\begin{enumerate}
\item $Ax_{k+1} =b$ 
\item $x_{k+1} > 0$ 
\item $c^Tx_{k+1} < c^Tx_k$ 
\end{enumerate}
\end{theorem}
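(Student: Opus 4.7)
My plan is to prove all three properties simultaneously by induction on $k$. The base case $k=0$ reduces to the standard long-step AFS step $x_1 = x_0 - \alpha_0 X_0^2 s_0$ from \eqref{eq:5}, for which (1) holds because $A X_0^2 s_0 = 0$ (by the definition of $y_0$), (2) holds because $(X_0^{-1} x_1)_j = 1 - \alpha (X_0 s_0)_j / \gamma(X_0 s_0) \geq 1 - \alpha > 0$, and (3) holds because $c^T x_1 - c^T x_0 = -\alpha_0 \|X_0 s_0\|^2 < 0$ using $c^T X_0^2 s_0 = \|X_0 s_0\|^2$. For the inductive step (for $k \geq 1$), I assume (1)--(3) at indices $k-1$ and $k$ and establish them at $k+1$.

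Parts (1) and (3) are the easy half. Applying $A$ to \eqref{eq:6} yields $A x_{k+1} = b - \alpha_k A X_k^2 s_k + \beta_k (Ax_k - A x_{k-1}) = b$, using the identity $A X_k^2 s_k = A X_k^2 c - A X_k^2 A^T y_k = 0$ (which follows directly from $y_k = (AX_k^2 A^T)^{-1} A X_k^2 c$) together with the inductive hypothesis $Ax_k = Ax_{k-1} = b$. Taking the inner product of \eqref{eq:6} with $c$, and using $c = s_k + A^T y_k$ together with $A X_k^2 s_k = 0$ to get $c^T X_k^2 s_k = \|X_k s_k\|^2$, gives
\[
c^T x_{k+1} - c^T x_k = -\alpha_k \|X_k s_k\|^2 + \beta_k (c^T x_k - c^T x_{k-1}).
\]
The first summand is strictly negative since $X_k s_k \ne 0$ (otherwise $c = A^T y_k$ would make $c^T x$ constant on the feasible set, contradicting a standing assumption), and the second is strictly negative by the inductive hypothesis, establishing the strict descent.

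The main obstacle is the positivity claim (2), which is where the two separate corrections in \eqref{eq:6} can interact destructively. The clean way to handle it is to rescale by the positive diagonal $X_k^{-1}$ and argue coordinate-by-coordinate: for each $j$,
\[
(X_k^{-1} x_{k+1})_j = 1 - \alpha \frac{(X_k s_k)_j}{\gamma(X_k s_k)} + \beta \frac{(X_k^{-1} \delta(x_k))_j}{\|X_k^{-1} \delta(x_k)\|_\infty}.
\]
The definition of $\gamma(\cdot)$ forces $(X_k s_k)_j / \gamma(X_k s_k) \leq 1$ (it lies in $(0,1]$ on the coordinates where $(X_k s_k)_j > 0$ and is nonpositive otherwise), so the middle summand is at least $-\alpha$; the $\ell_\infty$-normalization in $\beta_k$ makes the last summand lie in $[-\beta,\beta]$. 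Combining these worst-case bounds gives $(X_k^{-1} x_{k+1})_j \geq 1 - \alpha - \beta > 0$, since $\alpha + \beta \leq 2/3 < 1$ on $Q$; together with $x_k > 0$ this yields $x_{k+1} > 0$. The subtle point, and the reason the bound defining $Q$ is the natural one, is that the scalings $\alpha_k$ and $\beta_k$ are calibrated precisely so that each correction contributes worst-case displacement $\alpha$ and $\beta$ respectively in the $X_k^{-1}$ coordinate frame, so no tighter bound than $\alpha + \beta < 1$ is needed to keep all coordinates strictly positive.
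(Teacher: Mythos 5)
Your proof is correct, and the substance is the same as the paper's: part (1) rests on $AX_k^2s_k=0$, part (3) on $c^TX_k^2s_k=\|X_ks_k\|^2>0$, and part (2) on the fact that the relative displacement of each coordinate is at most $\alpha+\beta<1$. The organizational difference is that you run a clean one-step induction, whereas the paper unrolls everything into the telescoped expansions of Theorem \ref{theorem-1} (its equations \eqref{eq:10} and \eqref{eq:12}) and, for part (3), telescopes all the way down to $\gamma_k(c^Tx_1-c^Tx_0)$; your induction is more self-contained and avoids importing that machinery. One genuine (if small) improvement on your side: for positivity the paper bounds the two-sided quantity $\|X_k^{-1}\delta(x_{k+1})\|_\infty$ by $\alpha\,\|X_ks_k\|_\infty/\gamma(X_ks_k)+\beta\leqslant\alpha+\beta$, which tacitly uses $\|X_ks_k\|_\infty\leqslant\gamma(X_ks_k)$ --- an inequality that can fail when $X_ks_k$ has a negative entry of large magnitude (the paper itself records only $\gamma(u)\leqslant\|u\|_\infty$). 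Your coordinate-wise lower bound $(X_k^{-1}x_{k+1})_j\geqslant 1-\alpha-\beta$ uses only $(X_ks_k)_j/\gamma(X_ks_k)\leqslant 1$, which always holds, and is exactly the one-sided estimate that positivity actually requires; coordinates where $(X_ks_k)_j<0$ can only push $x_{k+1}$ up. You also correctly flag why $X_ks_k\neq 0$ (else $c^Tx$ would be constant on the feasible set), a point the paper leaves implicit.
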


\begin{proof}
Since the sequence $\bar{v_j} = \frac{X_j^2s_j}{\gamma(X_js_j)}$ solves the EAP problem \eqref{eq:2} for all $j \geqslant 0$, we have $A\bar{v_j} = 0$ for all $j \geqslant 0$. As $Ax_0 = b$, using equation \eqref{eq:12}, we have,
\begin{align*}
Ax_{k+1} & = Ax_0 + \sum\limits_{l=0}^{k} A \delta(x_{l+1}) \\
& = Ax_0 - \alpha \sum\limits_{l =0}^{k}\sum\limits_{j=0}^{l}\frac{\gamma_l}{\gamma_j} \frac{AX_ju_j}{\gamma(u_j)} \\
& = b - \alpha \sum\limits_{l =0}^{k}\sum\limits_{j=0}^{l}\frac{\gamma_l}{\gamma_j} \frac{AX_j^2s_j}{\gamma(X_js_j)} = b - \alpha *0 = b
\end{align*}
This proves part (1) of Theorem \ref{theorem-2}. For the second part, let us evaluate the upper bound of $\|X_k^{-1}\delta(x_{k+1})\|_{\infty}$,
\begin{align*}
\|X_k^{-1}\delta(x_{k+1})\|_{\infty} & = \Big\| - \alpha \frac{X_ks_k}{\gamma(X_ks_k)} +\beta  \frac{X_k^{-1}\delta(x_k)}{\|X_k^{-1}\delta(x_k)\|_{\infty}} \Big\|_{\infty} \\
& \leqslant \ \alpha \ \frac{\|X_ks_k\|_{\infty}}{\gamma(X_ks_k)} + \beta \ \frac{ \|X_k^{-1}\delta(x_k)\|_{\infty}}{\|X_k^{-1}\delta(x_k)\|_{\infty}} \\
& \leqslant \ \alpha + \beta \ \leqslant \ \frac{2}{3} \ < \ 1
\end{align*}
In particular, for all $j = 1,2,...,n$, we have,
\begin{align}
\label{eq:14}
\frac{|x_{k+1}^{j}-x_{k}^j|}{x_k^{j}} \ \leqslant \ \|X_k^{-1}\delta(x_{k+1})\|_{\infty} \ \leqslant \ \alpha + \beta \ \leqslant \ \frac{2}{3} \ < \ 1
\end{align}
Which implies $x_{k+1}^j > 0$ for all $j$. Therefore, $x_{k+1} > 0$ for all $k \geqslant 0$. Now equation using \eqref{eq:5} we have,
\begin{align}
c^Tx_{k+1} & = c^Tx_k- \alpha \frac{c^TX_k^2s_k}{\gamma(X_ks_k)} + \beta  \frac{c^T\delta(x_k)}{\|X_k^{-1}\delta(x_k)\|_{\infty}} \nonumber \\
& \leqslant \  c^Tx_k- \alpha \|X_ks_k\|+ \beta  \frac{c^T\delta(x_k)}{\|X_k^{-1}\delta(x_k)\|_{\infty}} \nonumber \\
& < \ c^Tx_k +\beta_k \left(c^Tx_k-c^Tx_{k-1}\right) \nonumber \\
& < \ c^Tx_k + \gamma_k \left(c^Tx_1-c^Tx_0\right) \ = \ c^Tx_k - \alpha \gamma_k \frac{\|X_0s_0\|^2}{\gamma(X_0s_0)}  \ < \ c^Tx_k \label{eq:15}
\end{align}
Therefore, $c^Tx_{k+1} < c^Tx_k$ for all $k \geqslant 0$.
\end{proof}

\begin{theorem}
\label{theorem-3}
The following statements hold for the GAFS algorithm:
\begin{enumerate}
\item The sequence of objective function values $\{c^Tx_k\}$ generated by the GAFS strictly decreases and converges to a finite value.
\item $X_ks_k \rightarrow \textbf{0}$ as $k \rightarrow \infty$.
\end{enumerate}
\end{theorem}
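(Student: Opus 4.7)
My plan is to obtain both parts of Theorem \ref{theorem-3} almost directly from Theorem \ref{theorem-2}, with the only new ingredient being a telescoping sum of the per-step decrement already extracted in the proof of Theorem \ref{theorem-2}.

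For part (1), Theorem \ref{theorem-2}(3) gives the strict monotone decrease $c^T x_{k+1} < c^T x_k$, so it remains only to produce a lower bound. Theorem \ref{theorem-2}(1)--(2) tell us that each $x_k$ is primal feasible, and by assumption the LP admits an optimal solution $x^*$, so $c^T x_k \geq c^T x^*$ for every $k$. A bounded, monotone decreasing real sequence converges, which settles part (1).

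For part (2), the starting point is the penultimate bound already obtained inside the proof of Theorem \ref{theorem-2} (the chain \eqref{eq:15}). Using $c^T X_k^2 s_k = \|X_k s_k\|^2$ (which follows from the normal equations for $y_k$, i.e.\ $A X_k^2 s_k = 0$) together with $\|X_k s_k\|^2 / \gamma(X_k s_k) \geq \|X_k s_k\|_2$ (since $\gamma(u) \leq \|u\|_\infty \leq \|u\|_2$), that line reads
\begin{align*}
c^T x_{k+1} \;\leq\; c^T x_k - \alpha \|X_k s_k\| + \beta_k \bigl(c^T x_k - c^T x_{k-1}\bigr), \qquad k \geq 1,
\end{align*}
with the base case $c^T x_1 = c^T x_0 - \alpha\|X_0 s_0\|^2/\gamma(X_0 s_0) \leq c^T x_0 - \alpha \|X_0 s_0\|$. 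Rearranging and using $c^T x_{k-1} - c^T x_k \geq 0$ together with $\beta_k \geq 0$ yields
\begin{align*}
c^T x_k - c^T x_{k+1} \;\geq\; \alpha \|X_k s_k\| + \beta_k\bigl(c^T x_{k-1} - c^T x_k\bigr) \;\geq\; \alpha \|X_k s_k\|.
\end{align*}
Summing over $k = 0, \dots, N$ gives the telescoping estimate
\begin{align*}
\alpha \sum_{k=0}^{N} \|X_k s_k\| \;\leq\; c^T x_0 - c^T x_{N+1} \;\leq\; c^T x_0 - c^T x^*,
\end{align*}
and letting $N \to \infty$ shows that the series $\sum_k \|X_k s_k\|$ converges, so $\|X_k s_k\| \to 0$ and therefore $X_k s_k \to \mathbf{0}$ componentwise.

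I do not anticipate any serious obstacle here: the entire argument is a routine monotone-convergence plus telescoping calculation once Theorem \ref{theorem-2} is in hand. The only place where a bit of care is required is the algebraic simplification $c^T X_k^2 s_k = \|X_k s_k\|^2$ via the normal equations, and the conversion of the $\|\cdot\|^2/\gamma(\cdot)$ term into the $\ell_2$-norm $\|X_k s_k\|$ that one actually wants to sum; both are immediate from the definitions.
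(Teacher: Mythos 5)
Your proposal is correct, and part (1) is essentially identical to the paper's argument (monotone decrease from Theorem \ref{theorem-2} plus the lower bound $c^Tx_k \geq c^Tx^*$ and monotone convergence). For part (2) you take a genuinely simpler route than the paper. Both start from the same per-step inequality $c^Tx_{k+1} \leq c^Tx_k - \alpha\|X_ks_k\| + \beta_k\,(c^Tx_k - c^Tx_{k-1})$ coming from \eqref{eq:15} and the identity $c^TX_k^2s_k = \|X_ks_k\|^2$ with $\gamma(X_ks_k) \leq \|X_ks_k\|_2$. The paper keeps the momentum term, bounds it above by $\gamma_k\,(c^Tx_1 - c^Tx_0)$ via the unrolled recursion \eqref{eq:10}, and then needs the summability $\sum_k \gamma_k = G < \infty$ from Lemma \ref{lemma-6} to conclude in \eqref{eq:16}--\eqref{eq:17}. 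You instead observe that $\beta_k\,(c^Tx_k - c^Tx_{k-1}) \leq 0$ once the strict decrease of $\{c^Tx_k\}$ is known, so the term can simply be dropped, giving $\alpha\|X_ks_k\| \leq c^Tx_k - c^Tx_{k+1}$ and a clean telescoping bound $\alpha\sum_k\|X_ks_k\| \leq c^Tx_0 - c^Tx^*$. This avoids any appeal to Lemma \ref{lemma-6} and yields a slightly sharper constant; the paper's version buys nothing extra here, though its $\gamma_k$ machinery is reused elsewhere (e.g.\ in Theorem \ref{theorem-4}). The only points needing care in your write-up — the normal-equation identity $AX_k^2s_k = 0$ and the positivity of $\gamma(X_ks_k)$ when the unboundedness test fails — are both handled correctly.
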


\begin{proof}
As a consequence of Theorem \ref{theorem-2}, we know that the sequence $\{c^Tx_k\}$ is a decreasing sequence. For the part (1) of Theorem \ref{theorem-3}, we just need to show that the sequence $\{c^Tx_k\}$ is bounded. As per our assumption, $x^*$ is the optimal solution of the primal problem (P) in \eqref{eq:1}. However, it implies the following,
\[ c^Tx^* \leqslant \dots < c^Tx_{k+1} < c^Tx_k < ...< c^Tx_0\] 
It means that the sequence $\{c^Tx_k\}$ is bounded. Therefore, using the \textit{Monotone Convergence Theorem} we can conclude that the sequence $\{c^Tx_k\}$ is convergent and $\lim_{k \rightarrow \infty} c^Tx_k = c^Tx^*$. For the second part, we see that,
\begin{align}
0 \ < \ \|X_ks_k\| & \ \leqslant \ \frac{1}{\alpha} \left[(c^Tx_{k}-c^Tx_{k+1})+ \beta_k (c^Tx_k-c^Tx_{k-1})\right] \nonumber \\
& < \ \frac{1}{\alpha}  \left[(c^Tx_{k}-c^Tx_{k+1})+ \gamma_k (c^Tx_1-c^Tx_{0})\right] \label{eq:16}
\end{align}
Now, by the properties of $\{c^Tx_k\}$, we have $ c^Tx_0 -c^Tx^* < \infty $ and $\bar{c} =  c^Tx_0-c^Tx_1 < \infty $, also as a consequence of Lemma \ref{lemma-6}, we have $G = \sum\limits_{k =0}^{\infty} \gamma_k < \infty$. Combining these facts and equation \eqref{eq:16}, we can write the following equation,
\begin{align}
 \sum\limits_{k =0}^{\infty}\|X_ks_k\| \ & < \ \frac{1}{\alpha} \left[\sum\limits_{k =0}^{\infty} (c^Tx_{k}-c^Tx_{k+1}) + \bar{c} \sum\limits_{k =0}^{\infty} \gamma_k \right] \nonumber \\
 & = \ \frac{1}{\alpha} \left[c^Tx_0 -c^Tx^* + \bar{c} \ G\right] \ < \ \infty \label{eq:17}
\end{align}
Now, equation \eqref{eq:17} allows us to write $ X_ks_k \rightarrow \textbf{0} \ \text{as} \ k \rightarrow \infty $. This proves second part of Theorem \ref{theorem-3}.
\end{proof}

\begin{remark}
\label{rem-2}
By using Theorem \ref{theorem-3}, we see that the complementary slackness condition holds in the limit since, $\lim_{k \rightarrow \infty} (x_k)_j(s_k)_j = 0$ for all $j = 1,2,3,...,n$.
\end{remark}

\begin{theorem}
\label{theorem-4}
The following statements hold for the GAFS algorithm:
\begin{enumerate}
\item The sequence $\{x_k\}$ converges to a point $x^*$, belongs to interior of the primal feasible region.
\item For all $k \geqslant 0$ there exists a $N = N(x,A) >0$ such that,
\[\|x_k-x^*\| \ \leqslant \ M \left(c^Tx_k-c^Tx^*\right) +\frac{\|x_{1}-x_0\| }{\gamma_1} G(k) \ \leqslant \ (M+N) \left(c^Tx_k-c^Tx^*\right) \]
\end{enumerate}
\end{theorem}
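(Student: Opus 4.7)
The plan is to prove part~(1) by establishing that $\{x_k\}$ is Cauchy via summability of $\sum_k\|x_{k+1}-x_k\|$, and then to refine the resulting estimate into the explicit decomposition claimed in part~(2). The starting point is identity~\eqref{eq:12} from the proof of Theorem~\ref{theorem-1}, which, adopting the convention $\gamma_0=1$ and using $\delta(x_1)=-\alpha_0 X_0^{2}s_0$, can be rewritten as
\[
 x_{l+1}-x_l=-\sum_{j=0}^{l}\frac{\gamma_l}{\gamma_j}\alpha_j X_j^{2}s_j.
\]
Summing from $l=k$ to $l=m-1$ and swapping the order of summation produces a closed-form expression for $x_m-x_k$ as a double sum, whose two natural pieces — the $j<k$ block and the $j\geqslant k$ block — correspond respectively to the $\|x_1-x_0\|G(k)/\gamma_1$ term and the $M(c^Tx_k-c^Tx^*)$ term of the claimed inequality.

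For part~(1), I would show absolute convergence of the double sum as $m\to\infty$. Lemma~\ref{lemma-6} (invoked in the proof of Theorem~\ref{theorem-3}) gives $\sum_l\gamma_l<\infty$, which controls the outer $l$-sum; equation~\eqref{eq:17} supplies $\sum_j\|X_js_j\|<\infty$; and an appendix bound of the form $\|X_j^{2}s_j\|\leqslant \|X_j\|\cdot\|X_js_j\|$ combined with boundedness of $\{X_k\}$ on the level set $\{x\in P:c^Tx\leqslant c^Tx_0\}$ (finite under the standing assumptions that an optimum exists and $A$ has rank $m$) converts the above into $\sum_j\alpha_j\|X_j^{2}s_j\|<\infty$. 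A uniform bound on the ratios $\gamma_l/\gamma_j$ through (essentially) geometric decay of $\gamma_k$ — itself a consequence of the admissibility constraint $\alpha+\beta\leqslant 2/3$ with $\beta<1/\phi$ built into $Q$ — then yields $\sum_k\|x_{k+1}-x_k\|<\infty$. Hence $\{x_k\}$ is Cauchy and converges to some $x^*\geqslant 0$ satisfying $Ax^*=b$.

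For part~(2) I would let $m\to\infty$ in the telescoped identity and split the double sum by whether $j<k$ or $j\geqslant k$. The low-$j$ block factors out the tail $G(k)=\sum_{l\geqslant k}\gamma_l$, while the remaining inner sum $\sum_{j=0}^{k-1}\alpha_j X_j^{2}s_j/\gamma_j$ telescopes under update~\eqref{eq:6} — using $\alpha_j X_j^{2}s_j=\beta_j\delta(x_j)-\delta(x_{j+1})$ together with $\beta_j/\gamma_j=1/\gamma_{j-1}$ — to a single residual of order $\|x_1-x_0\|/\gamma_1$, giving the second term of the claimed bound. The high-$j$ block is dominated term by term via~\eqref{eq:16}, after which the telescope $\sum_{j\geqslant k}(c^Tx_j-c^Tx_{j+1})\leqslant c^Tx_k-c^Tx^*$ produces the $M(c^Tx_k-c^Tx^*)$ factor, with $M$ the constant carried over from the appendix of Section~\ref{sec:afs}. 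Finally, the composite bound $\|x_k-x^*\|\leqslant(M+N)(c^Tx_k-c^Tx^*)$ is obtained by further estimating $G(k)\leqslant N'(c^Tx_k-c^Tx^*)$, which can be read off from~\eqref{eq:10} by telescoping the cost-decrease representation and invoking summability of $\gamma_k$ once more.

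The main obstacle is the combinatorial bookkeeping in the order-swap and in identifying exactly which residuals survive the telescoping collapse inside each block; in particular, one must verify that the contribution of the low-$j$ block genuinely reduces to a multiple of $\|x_1-x_0\|/\gamma_1$ rather than to a $k$-dependent residual. A second delicate point is the control of $\|X_j^{2}s_j\|/\gamma(X_js_j)$: since non-degeneracy is not being assumed (see Remark~\ref{rem-1}), this cannot be bounded by direct appeal to the simpler arguments available to earlier authors, and one must instead rely on the bounded sublevel-set argument together with the appendix lemmas.
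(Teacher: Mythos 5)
Your high-level strategy matches the paper's: show $\sum_k\|x_{k+1}-x_k\|<\infty$, conclude that $\{x_k\}$ is Cauchy, then extract the tail bound involving $G(k)$ and finish with Lemma~\ref{lemma-8}. But the mechanism you propose for the summability step has a genuine gap. The paper's engine is Lemma~\ref{lemma-4}, the EAP bound $\|\bar d\|\leqslant M\,c^T\bar d$ with $M=p(A,c)$ depending only on the data: applied to $\bar d_k=X_k^2s_k/\gamma(X_ks_k)$ it converts the step length directly into the cost decrease $c^Tx_k-c^Tx_{k+1}$ (plus the $\beta$-correction), with no reference to the size of the iterates. You replace this by $\|X_j^2s_j\|\leqslant\|X_j\|\,\|X_js_j\|$ together with ``boundedness of $\{X_k\}$ on the sublevel set.'' Two problems: (i) the standing assumptions do not include boundedness of the feasible region, and a sublevel set $\{x\in P: c^Tx\leqslant c^Tx_0\}$ of an LP that merely \emph{has} an optimal solution can be unbounded, so boundedness of $\{x_k\}$ cannot be presupposed here without circularity (it is part of what convergence is supposed to deliver); (ii) even granting bounded iterates, $\alpha_j=\alpha/\gamma(X_js_j)$ divides by $\gamma(X_js_j)$, and your estimate leaves you with the ratio $\|X_js_j\|/\gamma(X_js_j)$, which is not controlled by $\sum_j\|X_js_j\|<\infty$; bounding it requires Lemma~\ref{lemma-9} (boundedness of $u_k$ together with $\gamma(u_{k,N})\geqslant 1/(2p)$), which your outline never invokes for this purpose. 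So the chain ``equation~\eqref{eq:17} plus level-set boundedness $\Rightarrow\sum_j\alpha_j\|X_j^2s_j\|<\infty$'' does not close as written.

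The second issue is the one you flag yourself, and it is real rather than bookkeeping: telescoping $\sum_{j=1}^{k-1}\alpha_jX_j^2s_j/\gamma_j$ via $\alpha_jX_j^2s_j=\beta_j\delta(x_j)-\delta(x_{j+1})$ and $\gamma_j=\gamma_{j-1}\beta_j$ leaves the residual $\delta(x_1)-\delta(x_k)/\gamma_{k-1}$, so the low-$j$ block contributes a term proportional to $\|\delta(x_k)\|/\gamma_{k-1}\cdot G(k)$, not $\|x_1-x_0\|/\gamma_1\cdot G(k)$. Getting from the former to the latter needs precisely the recursive inequality the paper establishes as equation~\eqref{eq:19} (i.e., an inductive bound showing $\|\delta(x_{k+1})\|$ is dominated by $\gamma_k/\gamma_1$ times initial quantities plus a cost decrease); your proposal identifies the obstacle but supplies no argument that removes it. In short: the decomposition into a $j<k$ block and a $j\geqslant k$ block is a legitimate alternative organization of the same computation, but to make it rigorous you still need Lemma~\ref{lemma-4} (or an equivalent data-dependent bound on the EAP direction) in place of the level-set argument, and an inductive control of the residual $\delta(x_k)/\gamma_{k-1}$.
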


\begin{proof} Denoting $t_k = (c^Tx_{k}-c^Tx_{k-1})$, as a direct consequence of equation \eqref{eq:15} and Lemma \ref{lemma-4} we have,
\begin{align}
\|x_{k+1}-x_k\| \ & \leqslant \ \alpha M \frac{c^Td_k}{\|X_ks_k\|}+\frac{\beta}{\|X_k^{-1}\delta(x_k)\|_{\infty}} \|x_{k}-x_{k+1}\| \nonumber \\
& =  M \left(c^Tx_{k}-c^Tx_{k+1}\right) + \frac{M \beta \left(c^Tx_{k}-c^Tx_{k-1}\right)}{\|X_k^{-1}\delta(x_k)\|_{\infty}} + \frac{\beta \|x_{k}-x_{k+1}\|}{\|X_k^{-1}\delta(x_k)\|_{\infty}} \nonumber \\
& = M \beta_k t_k - M t_{k+1}+ \beta_k \delta(x_{k+1}) \nonumber \\
& = M \sum \limits _{j= 1}^{k-1} \frac{\gamma_{k}}{\gamma_j} t_{j+1}- M \sum \limits _{j= 2}^{k} \frac{\gamma_{k}}{\gamma_j} t_{j+1} + \frac{\gamma_k}{\gamma_1} \delta(x_{1}) \nonumber \\
& = M \frac{\gamma_k}{\gamma_1} t_2 - M \frac{\gamma_k}{\gamma_k} t_{k+1} + \frac{\gamma_k}{\gamma_1} \delta(x_{1}) \nonumber \\
& = M \frac{\gamma_k}{\gamma_1} \left(c^Tx_{2}-c^Tx_{1}\right) + M \left(c^Tx_{k}-c^Tx_{k+1}\right) + \frac{\gamma_k}{\gamma_1}  \|x_1-x_0\| \label{eq:19}
\end{align}
Furthermore,  from Lemma \ref{lemma-6}, we know that the sequence $\gamma_k$ converges to $0$ as $k \rightarrow \infty$, so we can assume that the sequence $\left\{\sum\limits_{k=1}^{m} \gamma_k \right\} $ converges to some finite value $G$ as $m$ goes to infinity, i.e.,
\[ \sum\limits_{k=1}^{\infty} \gamma_k = G < \infty\]
Then from equation \eqref{eq:19} we have,
\begin{align*}
\sum\limits_{k =0}^{\infty} & \|x_{k+1}-x_k\|  = \|x_{1}-x_0\| + \sum\limits_{k =1}^{\infty} \|x_{k+1}-x_k\| \\
& \leqslant \|x_{1}-x_0\| + M \sum\limits_{k =1}^{\infty} \left(c^Tx_{k}-c^Tx_{k+1}\right) + \frac{M}{\gamma_1}\left(c^Tx_{2}-c^Tx_{1}\right) \\
& \ \ + \frac{\|x_{1}-x_0\|}{\gamma_1} \sum\limits_{k =1}^{\infty} \gamma_k \\
& = \|x_{1}-x_0\| + M \left(c^Tx_{1}-c^Tx^*\right) + \frac{G}{\gamma_1} \left[M \left(c^Tx_{2}-c^Tx_{1}\right) + \|x_{1}-x_0\| \right] \\
& < \infty
\end{align*}
The above identity shows that, $\{x_k\}$ is a Cauchy sequence, and therefore, it is a convergence sequence (i.e., every real Cauchy sequence is convergent). Now, for all $0 \leqslant k \leqslant l$, using equation \eqref{eq:19} we have,
\begin{align}
\|x_l-x_k\| & \ \leqslant \ \big \|\sum \limits_{j =k}^{l-1} \left(x_{j+1}-x_{j}\right)\big \| \ \leqslant \ \sum \limits_{j =k}^{l-1} \|x_{j+1}-x_{j}\| \nonumber \\
& \leqslant  M \sum \limits _{j=k}^{l-1} \left(c^Tx_j-c^Tx_{k+1}\right)+ \frac{1}{\gamma_1} \left[M \left(c^Tx_{2}-c^Tx_{1}\right) + \|x_{1}-x_0\| \right]\sum\limits_{j=k}^{l-1} \gamma_j \nonumber \\
& \ \leqslant \ M \sum \limits _{j=k}^{l-1} \left(c^Tx_j-c^Tx_{k+1}\right)+  \frac{\|x_{1}-x_0\| }{\gamma_1}\sum\limits_{j=k}^{l-1} \gamma_j \label{eq:20}
\end{align}
Now, letting $l \rightarrow \infty$ in \eqref{eq:20} and defining $G(k) \overset{\underset{\mathrm{def}}{}}{=} \sum\limits_{j = k}^{\infty} \gamma_j $, we have,
\begin{align}
\|x_k-x^*\| \ &\leqslant \ M \left(c^Tx_k-v^*\right) +\frac{\|x_{1}-x_0\| }{\gamma_1} G(k) \nonumber\\ 
& \leqslant M \left(c^Tx_k-c^Tx^*\right) + \frac{\|x_{1}-x_0\| }{\gamma_1} \bar{N} \left(c^Tx_k-c^Tx^*\right) \nonumber \\
 & = (M+ N) \left(c^Tx_k-c^Tx^*\right) \label{eq:21}
\end{align}
This is the required bound. In the last line, we used Lemma \ref{lemma-8} with $N = \frac{\|x_{1}-x_0\| }{\gamma_1} \bar{N}$.
\end{proof}

\begin{theorem}
\label{theorem-x}
The sequence $\{B(x_k)\}$ generated by the AAFS algorithm converges to the same point $x^*$ and belongs to the interior of the primal feasible region.
\end{theorem}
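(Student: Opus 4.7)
My plan is to reduce the claim to a coordinate-wise statement and exploit the linear-type decay of the GAFS iterates, already built into Theorem \ref{theorem-4}, to argue that the entry-wise Shanks transformation preserves the limit $x^*$.

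Fix a coordinate $j$ and set $a_k = (x_k)_j$, $L = (x^*)_j$, $e_k = a_k - L$. Theorem \ref{theorem-4} gives $|e_k| \le \|x_k - x^*\| \le (M+N)(c^Tx_k - c^Tx^*)$, and Theorem \ref{theorem-3} gives $c^Tx_k \to c^Tx^*$, so $e_k \to 0$. Moreover $L > 0$ because Theorem \ref{theorem-4} places $x^*$ in the interior of the feasible region. Rewriting in terms of $e_k$,
\[ a_k - \frac{(a_{k+1}-a_k)^2}{a_{k+2}-2a_{k+1}+a_k} \;=\; L + e_k - \frac{(e_{k+1}-e_k)^2}{e_{k+2}-2e_{k+1}+e_k}, \]
so it suffices to prove the right-hand fraction tends to $0$. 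If this holds, the whole bracketed expression converges to $L > 0$, so the absolute value in the definition of $(B(x_k))_j$ becomes inactive for all large $k$, and letting $j$ range over all coordinates gives $B(x_k) \to x^*$; the ``interior'' part of the claim is then inherited verbatim from Theorem \ref{theorem-4}.

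To show the fraction vanishes I would extract geometric decay from Theorem \ref{theorem-1}: under the four stated assumptions the contraction factor appearing in that identity is bounded strictly below $1$ by a positive constant, so $c^Tx_k - c^Tx^* = O(\rho^k)$ for some $\rho \in (0,1)$, and hence $|e_k| = O(\rho^k)$ via Theorem \ref{theorem-4}. For any sequence asymptotic to $C\rho^k$ a direct calculation gives $(e_{k+1} - e_k)^2 = O(\rho^{2k})$ while $e_{k+2} - 2e_{k+1} + e_k \sim C(\rho - 1)^2 \rho^k$, so the quotient is of order $\rho^k \to 0$. This is exactly the cancellation underlying the classical statement that the Shanks transformation preserves (and accelerates) the limit of a linearly convergent sequence.

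The hard part will be upgrading the asymptotic $\sim$ to a rigorous two-sided bound, that is, producing a \emph{lower} estimate on $|e_{k+2} - 2e_{k+1} + e_k|$ that matches the $(e_{k+1}-e_k)^2$ numerator so that the denominator cannot vanish faster. I plan to attack this by substituting the update \eqref{eq:6} into three consecutive iterates and tracking the leading-order $\rho^k$ term; on coordinates where transient cancellation in the second difference might occur, I would argue that the numerator vanishes at least as fast there, so the Shanks correction is negligible and $(B(x_k))_j$ essentially reduces to $(x_k)_j$, which already converges to $L$. Once the fraction is shown to tend to zero, the rest of the argument is immediate.
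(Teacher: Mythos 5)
Your proposal is considerably more ambitious than the paper's own proof, which is a three-line appeal to Theorem \ref{theorem-4} plus ``the basic idea of SST'': the authors simply assert that since $x_k \to x^*$ coordinate-wise, the entry-wise Shanks transform converges to the same limit, and they never examine the denominator $(x_k)_j - 2(x_{k+1})_j + (x_{k+2})_j$ at all. You correctly identify that this is exactly where the argument is fragile, and your reduction to showing $\frac{(e_{k+1}-e_k)^2}{e_{k+2}-2e_{k+1}+e_k} \to 0$ is the right formalization; the upper bound $|e_k| = O(\rho^k)$ that you extract from Theorems \ref{theorem-7} and \ref{theorem-4} is legitimate.

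However, the proposal does not close the one step that matters. Geometric decay $|e_k| = O(\rho^k)$ is an upper bound only; the Shanks/Aitken limit-preservation theorem needs the two-sided condition $e_{k+1}/e_k \to \sigma$ with $\sigma \ne 1$ (equivalently, a lower bound on the second difference matching the squared first difference), and nothing in Theorems \ref{theorem-1}--\ref{theorem-7} delivers a coordinate-wise ratio limit: Theorem \ref{theorem-7} controls only the aggregate quantity $c^Tx_k - c^Tx^*$. Your fallback --- that on coordinates where the second difference suffers transient cancellation the numerator vanishes at least as fast --- is precisely the claim that needs proof, and it does not follow from anything established; one can have $|e_{k+1}-e_k|$ of order $\rho^k$ while $e_{k+2}-2e_{k+1}+e_k$ vanishes or changes sign infinitely often, in which case $(B(x_k))_j$ is undefined or unbounded along a subsequence. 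So the step you yourself flag as ``the hard part'' remains open. A smaller misreading: despite the wording of Theorem \ref{theorem-4}, the paper's $x^*$ is not strictly positive (the set $N=\{j : (x^*)_j = 0\}$ is treated as nonempty throughout Sections 3--5), so your claim that $L=(x^*)_j>0$ fails for $j\in N$; this does not break your argument, since the absolute value is continuous at $0$, but ``interior'' must be read as the relative interior of the optimal primal face, as the paper itself does in Theorem \ref{theorem-10}. In fairness, the paper's proof silently assumes the same unproved ratio condition --- it surfaces only later as the unjustified ansatz \eqref{eq:31} in Theorem \ref{theorem-ac1} --- so your attempt is more honest about where the difficulty lies, but it does not resolve it.
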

\begin{proof}
From Theorem \ref{theorem-4}, we know that the sequence $\{x_k\}$ generated by GAFS converges to $x^*$. Then using the definition \eqref{eq:4} and the basic idea of SST, we can immediately conclude that for all $j =1,2,...,n$, the following relation holds:
\begin{align*}
\lim_{k \to \infty} (B(x_k))_j = (x^*)_j
\end{align*}
Since, this holds for all $j =1,2,...,n$, we can prove $\lim_{k \to \infty} B(x_k) = x^*$. The last part of Theorem \ref{theorem-x} follows from the fact that $(B(x_k))_j > 0$, for all $k \geqslant 1$ and $j =1,2,...,n$.
\end{proof}

\noindent With $\lim_{k \rightarrow \infty}x_k = x^*$, let us define the sets $N$ and $B$ as follows,
\[N \overset{\underset{\mathrm{def}}{}}{=} \{i \ | \ x_i^* = 0\}, \ \ B = \{i \ | \ x_i^* > 0\}, \ \ |N|= p \]
Now, we provide proof for an important property of the sequence $\{x_k\}$, which  subsequently holds for original AFS algorithm. We showed that it holds for GAFS too with different constant. For the original AFS algorithm, the Theorem was proven by several authors in their work \cite{saigal:1996} and \cite{tsuchiya:1995}.
\begin{theorem}
\label{theorem-5}
There exists a $\delta > 0$ and a $R > 0$ such that for each $k \geqslant 0$
\[\frac{c^Tx_k-c^Tx^*}{\|x_k-x^*\|} \ \geqslant \ \frac{1}{R}, \quad  \frac{c^Tx_k-c^Tx^*}{\sum_{i \in N}(x_k)_i} \geqslant \delta, \quad \frac{c^Tx_k-c^Tx^*}{\sum_{i \in B}\big | (x_k)_i-(x^*)_i \big |} \geqslant \delta\]
\end{theorem}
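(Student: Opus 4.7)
The plan is to obtain all three inequalities essentially as corollaries of Theorem \ref{theorem-4}, combined with a routine comparison between the $\ell_1$ and $\ell_2$ norms. The first inequality is really just a rearrangement of the linear estimate already proved, and the other two follow because on $N$ the optimal coordinates vanish.

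First I would handle the bound on $\|x_k - x^*\|$. Theorem \ref{theorem-4}(2) states that
\[\|x_k - x^*\| \leqslant (M+N)\,(c^Tx_k - c^Tx^*)\]
for every $k \geqslant 0$. Setting $R \overset{\underset{\mathrm{def}}{}}{=} M+N$ and assuming $x_k \neq x^*$ (otherwise both sides of the claimed inequality are zero and there is nothing to prove), division gives
\[\frac{c^Tx_k - c^Tx^*}{\|x_k - x^*\|} \geqslant \frac{1}{R},\]
which is exactly the first desired inequality.

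Next I would derive the bounds involving the index sets $N$ and $B$ by comparing norms. Since $(x^*)_i = 0$ for $i \in N$ by definition, I can write
\[\sum_{i \in N}(x_k)_i \;=\; \sum_{i \in N} |(x_k)_i - (x^*)_i| \;\leqslant\; \|x_k - x^*\|_1 \;\leqslant\; \sqrt{n}\,\|x_k - x^*\|_2,\]
and, by the same $\ell_1$--$\ell_2$ inequality applied to the complementary set,
\[\sum_{i \in B} |(x_k)_i - (x^*)_i| \;\leqslant\; \|x_k - x^*\|_1 \;\leqslant\; \sqrt{n}\,\|x_k - x^*\|_2.\]
Combining each of these with the first inequality and setting $\delta \overset{\underset{\mathrm{def}}{}}{=} \tfrac{1}{R\sqrt{n}}$ yields the remaining two bounds simultaneously.

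The ``hard part'' of Theorem \ref{theorem-5} has in fact already been absorbed into Theorem \ref{theorem-4}: proving the linear estimate $\|x_k - x^*\| \leqslant (M+N)(c^Tx_k - c^Tx^*)$ is where the accelerated dynamics, the summability $\sum_k \gamma_k < \infty$ from Lemma \ref{lemma-6}, and the geometric bound of Lemma \ref{lemma-8} all enter. Once that estimate is in hand, Theorem \ref{theorem-5} is essentially a norm-equivalence statement, so I would present the argument as a short three-line corollary-style proof rather than re-deriving the Saigal/Tsuchiya style local analysis used for the unaccelerated AFS.
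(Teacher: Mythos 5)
Your proposal is correct and follows essentially the same route as the paper: both obtain the first inequality by rearranging the bound $\|x_k-x^*\| \leqslant (M+N)(c^Tx_k-c^Tx^*)$ from Theorem \ref{theorem-4} with $R=M+N$, and both get the $N$- and $B$-bounds from an $\ell_1$--$\ell_2$ comparison (the paper restricts to the sub-vectors first, giving the slightly sharper constant $\delta=\min\{1/(\sqrt{p}R),\,1/(\sqrt{n-p}R)\}$ versus your $1/(R\sqrt{n})$, which is immaterial since only existence of some $\delta>0$ is claimed).
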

\begin{proof}
Let, $R = M+N$, then from equation \eqref{eq:21}, we have for all $k \geqslant 0$,
\begin{align*}
\frac{c^Tx_k-c^Tx^*}{\|x_k-x^*\|} \ \geqslant \ \frac{1}{M+N} \ = \ \frac{1}{R}
\end{align*}
It proves the first part of Theorem \ref{theorem-5}. Similarly from equation \eqref{eq:21},
\begin{align*}
& c^Tx_k-c^Tx^* \ \geqslant \frac{\|x_k-x^*\|}{R} \geqslant \ \frac{\|x_{k,N}\|}{R} \geqslant \  \frac{\sum_{i \in N}(x_k)_i}{\sqrt{p} R} \\
& c^Tx_k-c^Tx^* \ \geqslant \frac{\|x_k-x^*\|}{R} \geqslant \ \frac{\|x_{k,B}-x_B^*\|}{R} \geqslant \   \frac{\sum_{i \in B}\big | (x_k)_i-(x^*)_i \big |}{\sqrt{n-p} R} 
\end{align*}
By denoting  $\delta = \min \{\frac{1}{\sqrt{p}R}, \frac{1}{\sqrt{n-p}R}\}$, we have the remaining results of Theorem \ref{theorem-5}.
\end{proof}

\begin{theorem}
If $\alpha, \ \beta \in Q$, then the following identities hold:
\label{theorem-6}
\begin{enumerate}
\item For all $k \geqslant 1$,
\[\|X_k^{-1}X_{k-1}\|_\infty  \ < \ 5 \]
\item There exists a $L_2 \geqslant 1$ such that for all $k \geq L_2$,
\begin{align*}
-\frac{\|X_ks_k\|}{c^Tx_k-c^Tx^*} \leqslant \frac{-1}{\|X_k^{-1}\left(x_k-x^*\right)\|} \leqslant \frac{-1}{\sqrt{n}} 
\end{align*}
\item For all $k \geqslant 1$,
\begin{align*}
\frac{1}{\gamma_k} = \ \frac{1}{\beta_1 \beta_2 ... \beta_k} <  \ \frac{\prod_{j=1}^{k} \|X_j^{-1}X_{j-1}\|}{\beta^k} (\alpha +\beta)^k \ < \ \left(\frac{5}{\beta}\right)^k
\end{align*}
\end{enumerate}
\end{theorem}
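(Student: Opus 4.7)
My plan is to prove the three parts separately, with Part 1 feeding directly into Part 3, and Part 2 handled independently by a Cauchy--Schwarz estimate together with a limiting argument built on Theorem \ref{theorem-4}.

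For Part 1, I would invoke equation \eqref{eq:14} directly. That bound reads $|(x_{k+1})_j-(x_k)_j|/(x_k)_j \leqslant \alpha+\beta \leqslant 2/3$ coordinatewise, which gives the two-sided estimate $\tfrac{1}{3}(x_k)_j \leqslant (x_{k+1})_j \leqslant \tfrac{5}{3}(x_k)_j$. Shifting the index by one and using the lower bound yields $(x_{k-1})_j/(x_k)_j \leqslant 3$ for every $j$, so $\|X_k^{-1}X_{k-1}\|_\infty = \max_j (x_{k-1})_j/(x_k)_j \leqslant 3 < 5$, which is the stated bound.

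For Part 2, I would derive the left inequality from Cauchy--Schwarz applied after a standard duality manipulation. Since $c = A^Ty_k + s_k$ and $A(x_k-x^*)=0$, one has $c^Tx_k - c^Tx^* = s_k^T(x_k-x^*) = (X_ks_k)^T\bigl(X_k^{-1}(x_k-x^*)\bigr) \leqslant \|X_ks_k\|\,\|X_k^{-1}(x_k-x^*)\|$, which rearranges to the required inequality. For the right inequality I would split coordinates into the sets $B$ and $N$. For $i \in N$ the entry $(X_k^{-1}(x_k-x^*))_i$ equals $1$ identically, while for $i \in B$ the primal convergence $x_k \to x^*$ from Theorem \ref{theorem-4} gives $(x^*)_i/(x_k)_i \to 1$ and hence $(X_k^{-1}(x_k-x^*))_i \to 0$. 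Therefore $\|X_k^{-1}(x_k-x^*)\|^2 \to |N| = p \leqslant n$, so one can pick $L_2$ large enough that the $B$-coordinate contribution is at most $n-p$, which forces $\|X_k^{-1}(x_k-x^*)\| \leqslant \sqrt{n}$ for $k \geqslant L_2$.

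For Part 3, I would unwind the definition $\gamma_k = \prod_{j=1}^k \beta_j$ with $\beta_j = \beta/\|X_j^{-1}\delta(x_j)\|_\infty$ to obtain $1/\gamma_k = \prod_{j=1}^k \|X_j^{-1}\delta(x_j)\|_\infty/\beta^k$. The crucial estimate is $\|X_j^{-1}\delta(x_j)\|_\infty \leqslant \|X_j^{-1}X_{j-1}\|_\infty(\alpha+\beta)$, which I would obtain by factoring $X_j^{-1}\delta(x_j) = (X_j^{-1}X_{j-1})\,(X_{j-1}^{-1}\delta(x_j))$ and applying the submultiplicative property of the $\infty$-norm, after noting that the same computation that produced \eqref{eq:14} (applied one index earlier) yields $\|X_{j-1}^{-1}\delta(x_j)\|_\infty \leqslant \alpha+\beta$; the initial case $j=1$ uses the short-step formula $x_1 - x_0 = -\alpha X_0^2s_0/\gamma(X_0s_0)$, which gives $\|X_0^{-1}\delta(x_1)\|_\infty \leqslant \alpha \leqslant \alpha+\beta$. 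Multiplying these bounds across $j=1,\dots,k$ delivers the first inequality of Part 3. The second follows by combining Part 1, namely $\|X_j^{-1}X_{j-1}\|_\infty < 5$, with the strict inequality $\alpha+\beta < 1$ (since $\alpha+\beta \leqslant 2/3$), so $\prod_j \|X_j^{-1}X_{j-1}\|_\infty(\alpha+\beta)^k/\beta^k < 5^k/\beta^k = (5/\beta)^k$.

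I do not expect any deep obstacle: the only delicate points are making sure the strict inequalities propagate in Part 3 (which requires $\alpha+\beta < 1$ and $\|X_j^{-1}X_{j-1}\|_\infty < 5$ both strict), and pinning down $L_2$ in Part 2 cleanly so that the looser bound $\sqrt{n}$, rather than the tight limit $\sqrt{p}$, suffices. All the machinery needed is already present in equation \eqref{eq:14}, Cauchy--Schwarz, and the convergence result of Theorem \ref{theorem-4}.
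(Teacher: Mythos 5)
Your proof is correct, and for Parts 1 and 3 it follows essentially the same route as the paper: both arguments start from the coordinatewise bound \eqref{eq:14} and both reduce Part 3 to the single-factor estimate $1/\beta_j \leqslant \|X_j^{-1}X_{j-1}\|_\infty(\alpha+\beta)/\beta$. The small differences are to your advantage. In Part 1 you read off $\max_j (x_{k-1})_j/(x_k)_j \leqslant 1/(1-(\alpha+\beta)) \leqslant 3 < 5$ directly, whereas the paper detours through $\|X_k^{-1}x_{k-1}\|_\infty \leqslant \|e - X_k^{-1}x_{k-1}\|_\infty + \|e\|_\infty \leqslant 4+1$ to land exactly on $5$; your constant is tighter and the derivation cleaner. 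In Part 3 the paper re-expands the update formula for $\delta(x_k)$ and applies the triangle inequality to get the factor $(1+\alpha/\beta)$, while you obtain the same factor $(\alpha+\beta)/\beta$ by factoring $X_j^{-1}\delta(x_j) = (X_j^{-1}X_{j-1})(X_{j-1}^{-1}\delta(x_j))$ and citing \eqref{eq:14} one index earlier — equivalent content, slightly more economical. The genuine difference is Part 2: the paper does not prove it, deferring to Saigal and Tsuchiya--Muramatsu with the remark that the direction solves the EAP problem \eqref{eq:2}. You supply the standard argument in full — the left inequality via $c^T(x_k-x^*) = s_k^T(x_k-x^*) = (X_ks_k)^T X_k^{-1}(x_k-x^*)$ and Cauchy--Schwarz (using $A(x_k-x^*)=0$), and the right inequality via the split over $N$ (where each coordinate of $X_k^{-1}(x_k-x^*)$ equals $1$) and $B$ (where it tends to $0$ by Theorem \ref{theorem-4}), so that $\|X_k^{-1}(x_k-x^*)\|^2 \to p \leqslant n$. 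This is exactly the argument the cited references use, and it makes the theorem self-contained where the paper is not. The only cosmetic caveat, which applies equally to the paper's own write-up, is that the first inequality in Part 3 is really a $\leqslant$ as derived; strictness genuinely enters only through $\|X_j^{-1}X_{j-1}\|_\infty < 5$ and $\alpha+\beta < 1$ in the final comparison, which you correctly flag.
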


\begin{proof}
From equation \eqref{eq:14} for all $j = 1,2,...,n$, we have,
\begin{align}
\label{eq:24}
\frac{|(x_{k+1}-x_{k})_j|}{(x_k)_j} \ \leqslant \ \|X_k^{-1}\delta(x_{k+1})\|_{\infty} \ \leqslant \ \alpha + \beta \ \leqslant \ \frac{2}{3}
\end{align}
Simplifying equation \eqref{eq:24} further for all $k \geqslant 1, \ j = 1,2,...,n$ we have,
\begin{align}
\label{eq:25}
\frac{3}{5} \ \leqslant \ \frac{(x_{k-1})_j}{(x_k)_j} \ \leqslant \ 3
\end{align}
Then using equation \eqref{eq:25} to the definition of maximum norm, we have,
\begin{align*}
\|X_k^{-1}\delta(x_k)\|_{\infty} \ \leqslant \ \max_j \left\{1+\frac{(x_{k-1})_j}{(x_k)_j}\right\} \ \leqslant  \ 4
\end{align*}
Therefore, we have,
\begin{align}
\label{eq:26}
\|X_k^{-1}x_{k-1}\|_{\infty} \ \leqslant \ \|e - X_k^{-1}x_{k-1}\|_{\infty} + \|e\|_{\infty} \ \leqslant \ 4  + 1 = 5 
\end{align}
Which proves part (1) of Theorem \ref{theorem-6}. Part (2) of Theorem \ref{theorem-6} is well studied in the literature ( see \cite{saigal:1996}, \cite{tsuchiya:1995}). We can prove part (2) of this Theorem easily as the sequence $\frac{X_k^2s_k}{\|X_ks_k\|}$ generated by the GAFS algorithm solves the EAP problem provided in equation \eqref{eq:2}, i.e., there exists a $L_2$ such that for all $k > L_2$,
\begin{align*}
-\frac{\|X_ks_k\|}{c^Tx_k-c^Tx^*} \ \leqslant \ \frac{-1}{\|X_k^{-1}\left(x_k-x^*\right)\|} \ \leqslant \ \frac{-1}{\sqrt{n}} 
\end{align*}
For proving the last part, we first need an upper bound of $\frac{1}{\beta_k}$. For all $k \geqslant 1$, we have,
\begin{align*}
\frac{1}{\beta_{k}} = \frac{\|X_{k}^{-1}\delta(x_k)\|_{\infty}}{\beta} &= \big \|\frac{X_{k}^{-1}\delta(x_{k-1})}{\|X_{k-1}^{-1}\delta(x_{k-1})\|_{\infty}} - \frac{\alpha}{\beta} \frac{X_{k}^{-1}X_{k-1}^2s_{k-1}}{\gamma(X_{k-1}s_{k-1})}\big \|_{\infty}  \\
& \leqslant \|X_{k}^{-1}X_{k-1}\|_{\infty} \big \|\frac{X_{k-1}^{-1}\delta(x_{k-1})}{\|X_{k-1}^{-1}\delta(x_{k-1})\|_{\infty}} - \frac{\alpha}{\beta} \frac{X_{k-1}s_{k-1}}{\gamma(X_{k-1}s_{k-1})}\big \|_{\infty}  \\
& \leqslant \|X_{k}^{-1}X_{k-1}\|_{\infty}\left(1+ \frac{\alpha}{\beta}\right) \  < \ \frac{5(\alpha + \beta)}{ \beta} \ < \ \frac{5 (\beta +2)}{3\beta} \ < \ \frac{5}{\beta} 
\end{align*}
Here, we use the identity in equation \eqref{eq:26}. Then by the definition of $\gamma_k$ for all $k \geqslant 1$ we have,
\begin{align*}
\frac{1}{\gamma_k} = \ \frac{1}{\beta_1 \beta_2 ... \beta_k} \  < \ \left(\frac{5}{\beta}\right)^k
\end{align*}
It proves the remaining parts of Theorem \ref{theorem-6}.
\end{proof}

\noindent For the remaining sections, let us define sequences $\{u_k\}, \{v_k\}$ and $\{h_k\}$ as follows:
\begin{align}
\label{eq:30}
u_k \overset{\underset{\mathrm{def}}{}}{=} \frac{X_ks_k}{c^Tx_k-c^Tx^*}, \ \  v_k \overset{\underset{\mathrm{def}}{}}{=} \frac{X_k^{-1}\delta(x_k)}{\|X_k^{-1}\delta(x_k)\|_{\infty}}(c^Tx_k-c^Tx^*) \ \ h_k \overset{\underset{\mathrm{def}}{}}{=} \frac{c^T\delta(x_k)}{\|X_k^{-1}\delta(x_k)\|_{\infty}}
\end{align}

\section{Convergence Rate}
\label{sec:rate}
In this section, we measured the significance of AAFS over GAFS in terms of convergence rate given that GAFS converges linearly (see the following Theorem \ref{theorem-7}). The following Theorem gives us the linear convergence rate of GAFS algorithm.

\begin{theorem}
\label{theorem-7}
The following statements holds for the GAFS algorithm:

\begin{enumerate}
\item There exists a $L \geqslant 1$ such that for all $k \geqslant L$,
\begin{align*}
 \frac{c^Tx_{k+1}- c^Tx^*}{c^Tx_{k}- c^Tx^*} \leqslant 1- \frac{\alpha}{\sqrt{n}}- \frac{\alpha}{\sqrt{n}} \left(\frac{\beta}{5}\right)^k
\end{align*}
\item For $\alpha, \beta \in Q $ the following limit holds:
\begin{align*}
 \lim_{k \to \infty}\frac{c^Tx_{k+1}- c^Tx^*}{c^Tx_{k}- c^Tx^*} = 1- \frac{\alpha}{\sqrt{n}} \ < \ 1
\end{align*}
\end{enumerate}
 
\end{theorem}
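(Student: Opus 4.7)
The plan is to start from the exact identity furnished by Theorem \ref{theorem-1} part 1,
\[
\frac{c^Tx_{k+1}-c^Tx^*}{c^Tx_k-c^Tx^*} \;=\; 1 \;-\; \alpha\sum_{j=0}^{k}\frac{\|u_j\|^2}{\gamma(u_j)}\,\frac{r_k}{r_j},
\]
and to obtain the claimed upper bound on the left-hand side by producing a lower bound on this finite, term-wise non-negative sum. Because every summand is non-negative, it suffices to retain two carefully chosen indices and throw away the rest: one index will carry the leading $1/\sqrt{n}$ contribution, the other will carry the residual $(\beta/5)^k$ contribution. The elementary chain $\gamma(u)\leq\|u\|_\infty\leq\|u\|_2$, valid for every $u$, gives the omnipresent pointwise inequality $\|u_j\|^2/\gamma(u_j)\geq\|u_j\|$, which I will use repeatedly.

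For the first contribution I would keep $j=k$: here $r_k/r_k=1$ so the summand equals $\|u_k\|^2/\gamma(u_k)\geq\|u_k\|$, and Theorem \ref{theorem-6} part 2 gives $\|u_k\|=\|X_ks_k\|/(c^Tx_k-c^Tx^*)\geq 1/\sqrt{n}$ once $k\geq L_2$. For the second contribution I would keep $j=0$: since $\gamma_0=1$ (empty product) and the monotonicity $c^Tx_k\leq c^Tx_0$ from Theorem \ref{theorem-2} part 3 give
\[
\frac{r_k}{r_0} \;=\; \gamma_k\,\frac{c^Tx_0-c^Tx^*}{c^Tx_k-c^Tx^*}\;\geq\;\gamma_k,
\]
and Theorem \ref{theorem-6} part 3 then yields $\gamma_k>(\beta/5)^k$, this contributes at least $(\|u_0\|^2/\gamma(u_0))(\beta/5)^k$. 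Combining, the sum is at least $(1/\sqrt{n})+(1/\sqrt{n})(\beta/5)^k$ as soon as one ensures the prefactor at $j=0$ is at least $1/\sqrt{n}$; if the initialization-dependent quantity $\|u_0\|^2/\gamma(u_0)$ is too small, I would instead anchor at $j=j^\star\geq L_2$ (where $\|u_{j^\star}\|^2/\gamma(u_{j^\star})\geq 1/\sqrt{n}$ is already available from Theorem \ref{theorem-6} part 2) and absorb the constant $1/\gamma_{j^\star}$ into the threshold $L$. Multiplying the sum lower bound by $-\alpha$ and adding $1$ gives part 1.

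Part 2 then follows by passing to the limit in part 1: because $\beta/5<1$ under $\beta\in(0,1/\phi)$, the term $(\alpha/\sqrt{n})(\beta/5)^k$ vanishes as $k\to\infty$, so $\limsup_{k\to\infty}\leq 1-\alpha/\sqrt{n}$. For the matching $\liminf$ one argues that every summand with $j<k$ is damped by the prefactor $\gamma_k/\gamma_j$, which tends to $0$ because $\gamma_k\to 0$ (the convergent-series statement of Lemma \ref{lemma-6} invoked in Theorem \ref{theorem-3}), so in the limit the full sum collapses to its $j=k$ term, which approaches $1/\sqrt{n}$ by the standard long-step asymptotics of \cite{saigal:1996,tsuchiya:1995} cited in Theorem \ref{theorem-6}. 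The main obstacle I foresee is pinning down the constant in front of $(\beta/5)^k$ in part 1 exactly as $1/\sqrt{n}$ rather than an initialization-dependent constant; this is the technical role of choosing $L$ large (beyond $L_2$) and re-anchoring the second kept index, and it is also what forces the appeal to both parts 2 and 3 of Theorem \ref{theorem-6} simultaneously.
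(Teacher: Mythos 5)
Your part (1) is, at bottom, the paper's own argument repackaged. The paper does not pass through Theorem \ref{theorem-1}; it bounds the one-step recursion \eqref{eq:5} directly, keeping the current term $-\alpha\|X_ks_k\|/(c^Tx_k-c^Tx^*)\leqslant-\alpha/\sqrt{n}$ via Theorem \ref{theorem-6}(2) (exactly your $j=k$ term) and telescoping the momentum term all the way back to $c^Tx_1-c^Tx_0=-\alpha_0\|X_0s_0\|^2$, which is precisely your $j=0$ summand; it then replaces $\|X_0s_0\|$ by $\|X_ks_k\|$ (asserting monotonicity of $\{\|X_ks_k\|\}$) so that Theorem \ref{theorem-6}(2) can be applied a second time, and finishes with $\gamma_k>(\beta/5)^k$. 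So the two-term decomposition is the same; only the handling of the initialization-dependent constant differs. Your fix for that constant, however, is stated incorrectly: a multiplicative factor $1/\gamma_{j^\star}$ sitting in front of $(\beta/5)^k$ cannot be ``absorbed into the threshold $L$'' --- if $\gamma_{j^\star}>1$ the bound is weakened for \emph{every} $k$, and no choice of $L$ repairs a multiplicative loss. The correct repair is already available in the proof of Theorem \ref{theorem-6}(3): since $1/\beta_j<5/\beta$ for every $j$, one has $r_k/r_{j^\star}\geqslant\gamma_k/\gamma_{j^\star}=\prod_{j=j^\star+1}^{k}\beta_j>(\beta/5)^{k-j^\star}\geqslant(\beta/5)^k$, which delivers the stated constant $1/\sqrt{n}$ with no appeal to monotonicity of $\|X_ks_k\|$. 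With that substitution your part (1) is complete, and in fact cleaner than the paper's, which never justifies the claim $\|X_0s_0\|\geqslant\|X_ks_k\|$.

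For part (2) you are right that part (1) only yields $\limsup_{k\to\infty}\leqslant 1-\alpha/\sqrt{n}$ and that a matching lower bound is needed; the paper itself skips this and simply declares part (2) a direct consequence of part (1). But your sketch of the matching bound does not go through as stated. The off-diagonal summands carry the factor $r_k/r_j=(\gamma_k/\gamma_j)\cdot(c^Tx_j-c^Tx^*)/(c^Tx_k-c^Tx^*)$, and while $\gamma_k/\gamma_j\to0$ for fixed $j$, the second factor diverges geometrically (by part (1) itself), so the sum does not obviously collapse to its $j=k$ term. More seriously, nothing in the paper gives $\|u_k\|^2/\gamma(u_k)\to1/\sqrt{n}$: Theorem \ref{theorem-6}(2) is only the one-sided bound $\|u_k\|\geqslant1/\sqrt{n}$, and Lemma \ref{lemma-9} together with the limit $u_{k,N}\to e/p$ established in Theorem \ref{theorem-10} actually suggests $\|u_k\|^2/\gamma(u_k)\to1$. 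So your part (2) has a genuine gap --- though it is a gap the paper's own proof shares.
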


\begin{proof}
We used Theorem \ref{theorem-1} for proving part (1) of this Theorem. First, let us choose $L = L_2$ (part (2) of Theorem \ref{theorem-6}), then using the update formula \eqref{eq:5} for all $k > L$, we have, 
\begin{align*}
\frac{c^Tx_{k+1}- c^Tx^*}{c^Tx_{k}- c^Tx^*} & = 1- \frac{\alpha }{\gamma(X_ks_k)} \frac{c^TX_k^2s_k}{(c^Tx_{k}- c^Tx^*)} + \frac{\beta}{\|X_k^{-1}\delta(x_k)\|_{\infty}} \frac{c^Tx_{k}- c^Tx_{k-1}}{(c^Tx_{k}- c^Tx^*)}  \\
& \leqslant 1- \alpha \ \frac{\|X_ks_k\|}{c^Tx_{k}- c^Tx^*} + \beta_k \ \frac{c^Tx_{k}- c^Tx_{k-1}}{c^Tx_{k}- c^Tx^*}  \\
& \leqslant 1- \frac{\alpha}{\|X_k^{-1}\left(x_k-x^*\right)\|} + \beta_k \gamma_{k-1} \ \frac{c^Tx_{1}- c^Tx_{0}}{c^Tx_{k}- c^Tx^*}  \\
& \leqslant 1- \frac{\alpha}{\sqrt{n}} - \alpha \gamma_k \ \frac{\|X_0s_0\|}{c^Tx_{k}- c^Tx^*}  \\
& \leqslant 1- \frac{\alpha}{\sqrt{n}} - \alpha \gamma_k \ \frac{\|X_ks_k\|}{c^Tx_{k}- c^Tx^*}  \\
& \leqslant 1 - \frac{\alpha}{\sqrt{n}}- \frac{\alpha \gamma_k}{\sqrt{n}} \ \leqslant \  1- \frac{\alpha}{\sqrt{n}}- \frac{\alpha}{\sqrt{n}} \left(\frac{\beta}{5}\right)^k \ < \ 1- \frac{\alpha}{\sqrt{n}}
\end{align*}
 Here, we used the the fact that the sequence $\{\|X_ks_k\|\}$ is a decreasing sequence and converges to zero due to the property of complementary slackness, i.e., $\|X_0s_0\| \ \geqslant \ \|X_1s_1\| \ \geqslant ... \geqslant \  \|X_ks_k\|$ (see part (2) of Theorem \ref{theorem-3}).

Now, part (2) of Theorem \ref{theorem-7} is a direct consequence of part (1) of Theorem \ref{theorem-7} as the sequence $\{\left(\frac{\beta}{4 \sqrt{n}}\right)^k\}$ converges to zero as $k \rightarrow \infty$.
\end{proof}

\begin{remark}
Note that, Theorem \ref{theorem-7} indicates that GAFS algorithm converges linearly. Next, we compared the convergence rates between the proposed algorithms. In other words, how good is the sequence $\{c^TB(x_k)\}$ compared to the sequence $\{c^Tx_k\}$ when the latter converges linearly.  The next Theorem (Theorem \ref{theorem-ac1}) shows that it is better, in the sense that it converges faster, meaning that the AAFS accelerates the convergence of GAFS.
\end{remark}

\begin{theorem}
\label{theorem-ac1}
Assume that the sequence $\{c^Tx_k\}$ converges to $c^Tx^*$ linearly. Let $B(x_k)$ be as in \eqref{eq:4}, then the sequence $\{c^TB(x_k)\}$, converges faster to $c^Tx^*$ than $\{c^Tx_k\}$ in the sense that,
\begin{align*}
\lim_{k \rightarrow \infty} \frac{c^TB(x_k)-c^Tx^*}{c^Tx_k-c^Tx^*} = 0
\end{align*}
\end{theorem}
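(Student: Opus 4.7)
The plan is to reduce the claim to the classical scalar Shanks/Aitken $\Delta^2$ acceleration theorem applied coordinate by coordinate, and then convert the entrywise acceleration into the desired inner-product acceleration via a simple Cauchy--Schwarz bound.

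First I would recall and (briefly) verify the scalar fact that if a real sequence $\{a_k\}$ satisfies $a_k \to a^*$ with $\lim_{k\to\infty} \frac{a_{k+1}-a^*}{a_k - a^*} = \lambda$ for some $\lambda \in (-1,1)\setminus\{0\}$, then the Aitken transform $\tilde a_k := a_k - \frac{(a_{k+1}-a_k)^2}{a_k - 2a_{k+1}+a_{k+2}}$ is eventually well defined and satisfies $(\tilde a_k - a^*)/(a_k - a^*) \to 0$. This is standard: writing $a_k - a^* = \lambda^k(C + \epsilon_k)$ with $\epsilon_k \to 0$, the numerator of the correction is $\lambda^{2k}((\lambda-1)C+o(1))^2$ and the denominator is $\lambda^k((\lambda-1)^2 C+o(1))$, so the correction agrees with $a_k-a^*$ to leading order and their difference is $o(\lambda^k)$.

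Next I would apply this entrywise to $\{(x_k)_j\}$ for each fixed $j$. Here the main obstacle is that the hypothesis only asserts linear convergence of $\{c^T x_k\}$, so I need coordinatewise linear rates. From Theorem~\ref{theorem-4} we already have $\|x_k - x^*\| \leq (M+N)(c^T x_k - c^T x^*)$ and from Theorem~\ref{theorem-7} the rate $\rho := 1 - \alpha/\sqrt{n} \in (0,1)$ for $c^T x_k$, so $|(x_k)_j - (x^*)_j| = O(\rho^k)$ for every $j$. To extract a precise per-coordinate ratio $\lambda_j = \lim_k \frac{(x_{k+1})_j-(x^*)_j}{(x_k)_j-(x^*)_j} \in (-1,1)$, I would use part~(2) of Theorem~\ref{theorem-1} together with the fact that $u_k = X_k s_k/(c^T x_k - c^T x^*)$ admits a limit on the boundary set $N$ (a standard consequence of the EAP characterization invoked in Theorem~\ref{theorem-6}(2)) and that $\gamma_k \to 0$ geometrically. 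The coordinates in $N$ and in $B$ would be handled separately, and if for some $j$ one gets $\lambda_j = 0$ the conclusion is trivial since then the coordinate is already $o(\rho^k)$.

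Finally, once the scalar acceleration result has been applied componentwise to give $|(B(x_k))_j - (x^*)_j| = o\bigl(|(x_k)_j - (x^*)_j|\bigr) = o(c^T x_k - c^T x^*)$ for each $j$, I combine the entries by Cauchy--Schwarz:
\begin{align*}
\bigl|c^T B(x_k) - c^T x^*\bigr| \;=\; \bigl|c^T\bigl(B(x_k)-x^*\bigr)\bigr| \;\leq\; \|c\|\,\|B(x_k)-x^*\| \;=\; o\bigl(c^T x_k - c^T x^*\bigr),
\end{align*}
which is precisely the claim. The absolute value in the definition \eqref{eq:4} of $(B(x_k))_j$ does not affect the asymptotic analysis because, for $k$ large, the Aitken correction is a small perturbation of $(x_k)_j$, which is itself close to the nonnegative limit $(x^*)_j$, so the quantity inside $|\cdot|$ is eventually nonnegative. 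The hardest part of the argument, as noted above, is verifying the per-coordinate linear ratio hypothesis; the remaining steps are routine.
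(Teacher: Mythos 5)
Your strategy is essentially the paper's: both proofs reduce the claim to the scalar Aitken/Shanks acceleration fact applied coordinatewise, under the assumption that each coordinate error $(x_k)_j-(x^*)_j$ itself converges linearly with a well-defined ratio. The paper simply posits this in its equation \eqref{eq:31}, writing $\frac{c_j(x_{k+1})_j-c_j(x^*)_j}{c_j(x_k)_j-c_j(x^*)_j}=\sigma_j+(\lambda_k)_j$ with $(\lambda_k)_j\to 0$ ``by virtue of Theorem \ref{theorem-7}'', and then carries out the same leading-order cancellation you describe; so the per-coordinate ratio hypothesis that you correctly flag as the hardest step is not actually established in the paper either --- Theorem \ref{theorem-7} only controls the aggregate $c^Tx_k-c^Tx^*$, and neither your sketch via Theorem \ref{theorem-1}(2) and the limit of $u_{k,N}$ nor the paper's bare assertion rules out that the coordinatewise ratio fails to converge or equals $1$ (in which case the Aitken denominator degenerates; note also the paper needs $\sigma_j\neq 1$ for its final sum $\sum_j\frac{(\sigma_j-1)^2}{\sigma_j^2-2\sigma_j+1}=n$ to make sense). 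Where you genuinely differ, and improve on the paper, is the recombination step: you pass from the entrywise estimate $|(B(x_k))_j-(x^*)_j|=o\bigl(|(x_k)_j-(x^*)_j|\bigr)$ to the inner-product statement via Cauchy--Schwarz together with the bound $\|x_k-x^*\|\leqslant (M+N)(c^Tx_k-c^Tx^*)$ from Theorem \ref{theorem-4}, which is clean and correct; the paper instead replaces the common denominator $c^Tx_k-c^Tx^*$ by $n\,[c_j(x_k)_j-c_j(x^*)_j]$ inside the sum in \eqref{eq:33}, a step that is not valid in general. So: same underlying route, same unproved per-coordinate linear-ratio assumption (which you at least acknowledge), and a sounder final aggregation on your side.
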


\begin{proof} By virtue of Theorem \ref{theorem-7}, we can argue that there exists sequences $\{\sigma_j\}$ and $\{(\lambda_k)_j\}$ such that,
\begin{align}
\label{eq:31}
 \frac{c_j(x_{k+1})_j- c_j(x^*)_j}{c_j(x_{k})_j- c_j(x^*)_j} = \sigma_j + (\lambda_k)_j \ \ \forall \ j = 1,2,...,n, \ k \geq 1 
\end{align}
And $\lim_{k \rightarrow \infty} (\lambda_k)_j = 0 \ \forall \ j$. Simplifying \eqref{eq:31} for $(k+1)$th and $(k+2)$th terms, we have,
\begin{align}
 & c_j(x_{k+1})_j = c_j(x^*)_j + \left(\sigma_j + (\lambda_k)_j\right) \left(c_j(x_{k})_j- c_j(x^*)_j\right) \nonumber \\
 & c_j(x_{k+2})_j = c_j(x^*)_j + \left(\sigma_j + (\lambda_{k+1})_j\right) \left(c_j(x_{k+1})_j- c_j(x^*)_j\right) \label{eq:32}
\end{align}
for all $ j = 1,2,...,n, \ k \geq 1$. Now using \eqref{eq:31} and \eqref{eq:32}, we have,
\begin{align}
 \frac{c^TB(x_k)-c^Tx^*}{c^Tx_k-c^Tx^*} & = \frac{c^Tx_k-c^Tx^*}{c^Tx_k-c^Tx^*} \\ 
 & - \sum\limits_{j=1}^{n}  \frac{[c_j(x_k)_j-c_j(x_{k+1})_j]^2}{[c^Tx_k-c^Tx^*][c_j(x_k)_j-2c_j(x_{k+1})_j+c_j(x_{k+2})_j]} \nonumber \\
 = 1 & - \frac{1}{n} \sum\limits_{j=1}^{n}  \frac{[c_j(x_k)_j-c_j(x_{k+1})_j]^2}{[c_j(x_k)_j-c_j(x^*)_j][c_j(x_k)_j-2c_j(x_{k+1})_j+c_j(x_{k+2})_j]} \nonumber \\
  = 1 & - \frac{1}{n} \sum\limits_{j=1}^{n} \frac{\left[\frac{c_j(x_k)_j-c_j(x_{k+1})_j}{c_j(x_k)_j-c_j(x^*)_j}\right]^2}{\frac{c_j(x_k)_j-2c_j(x_{k+1})_j+c_j(x_{k+2})_j}{c_j(x_k)_j-c_j(x^*)_j}} \nonumber \\
  = 1 & - \frac{1}{n} \sum\limits_{j=1}^{n} \frac{\left[\sigma_j + (\lambda_k)_j-1\right]^2}{\left[\sigma_j + (\lambda_k)_j\right]\left[\sigma_j + (\lambda_{k+1})_j\right]- 2 \left[\sigma_j + (\lambda_k)_j\right]+1} \label{eq:33}
\end{align}
Taking the limit $k \rightarrow \infty$ in \eqref{eq:33} and using the property of sequence $\{(\lambda_k)_j\}$, we have,

\begin{multline*}
\lim_{k \rightarrow \infty} \frac{c^TB(x_k)-c^Tx^*}{c^Tx_k-c^Tx^*} \\
= 1- \lim_{k \rightarrow \infty} \frac{1}{n} \sum\limits_{j=1}^{n} \frac{\left[\sigma_j + (\lambda_k)_j-1\right]^2}{\left[\sigma_j + (\lambda_k)_j\right]\left[\sigma_j + (\lambda_{k+1})_j\right]- 2 \left[\sigma_j + (\lambda_k)_j\right]+1}  \\
 = 1- \frac{1}{n} \sum\limits_{j=1}^{n} \frac{(\sigma_j-1)^2}{\sigma_j^2-2 \sigma_j+1} \ = 1- 1 = 0
\end{multline*}
This proves the required Theorem.
\end{proof}

\section{Convergence of the Dual sequence}
\label{sec:dual}
In this section, we introduced a local version of potential function largely studied in the literature. For the convergence of the dual sequence, it is required to control both the step sizes $\alpha$ and $\beta$. For the original Affine Scaling method, it was first shown by Tsuchiya \textit{et al.} \cite{tsuchiya:1995} that for the dual convergence we need to have $\alpha \leqslant \frac{2}{3}$. A simpler version of its proof is also available in Saigal \cite{saigal:1996}. Here, we proved that the dual sequence generated by GAFS method converges if we have $\alpha, \beta \in Q$. We see that the original result of AFS can be found with the choice of $ \beta = 0$ in our proof. At first, we introduced the local potential function (defined in \cite{tsuchiya:1991}). For any $x > 0 $ with $c^Tx- c^Tx^* > 0$ and $N = \{j \ | \ (x^*)_j = 0\}$ with $p = |N|$, let us define the following function:
\begin{align*}
F_N(x) \overset{\underset{\mathrm{def}}{}}{=} p \log (c^Tx-c^Tx^*)- \sum\limits_{j \in N} \log (x)_j
\end{align*}

\begin{theorem}
\label{theorem-9} 
For the sequence $\{x_k\}$ we can show that for all $k \geqslant 0$,
\begin{multline}
F_N(x_{k+1})- F_N(x_k) = p \log (1- \theta \|w_{k,N}\|^2- \theta \sigma_k^2+ \theta  w_{k,N}^T v_{k,N}  \\
- \theta \left(\frac{2 \delta_k}{p} -\frac{2 \omega_k}{p}+ \epsilon_k + \phi h_k\right))
- \sum\limits_{j \in N} \log \left(1- \theta (w_k)_j - (\phi- \theta) (v_k)_j\right) \label{eq:35}
\end{multline}
; where $w_{k,N} = u_{k,N}+ v_{k,N}-\frac{1}{p}e, \ \ \bar{\alpha} = \frac{\alpha}{\gamma(u_{k,N})}, \ \ \bar{\beta} = \frac{\beta}{\|v_{k,N}\|_{\infty}},  \  \ \theta = \frac{p\bar{\alpha}}{p- \bar{\alpha}}, \ \ \phi = \frac{p\bar{\beta}}{p- \bar{\alpha}}$,  \\
$\quad \quad \sum\limits_{k = L}^{\infty} |\epsilon_k| < \infty, \quad \sum\limits_{k = L}^{\infty} |\delta_k| < \infty,  \quad \sum\limits_{k = L}^{\infty} |\omega_k| < \infty$
\end{theorem}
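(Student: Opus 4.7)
The plan is to compute $F_N(x_{k+1}) - F_N(x_k) = p\log\tfrac{c^Tx_{k+1}-c^Tx^*}{c^Tx_k-c^Tx^*} - \sum_{j\in N}\log\tfrac{(x_{k+1})_j}{(x_k)_j}$ by direct substitution of the GAFS update and then rearrange the result into the stated form. Starting from \eqref{eq:6} and the definitions in \eqref{eq:30}, taking inner product with $c$ gives $\tfrac{c^Tx_{k+1}-c^Tx^*}{c^Tx_k-c^Tx^*} = 1 - \alpha\|u_k\|^2/\gamma(u_k) + \beta h_k/(c^Tx_k-c^Tx^*)$, while reading off the $j$-th coordinate gives $\tfrac{(x_{k+1})_j}{(x_k)_j} = 1 - \alpha(u_k)_j/\gamma(u_k) + \beta(v_k)_j/(c^Tx_k-c^Tx^*)$. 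These produce a raw version of the identity still living in the global $\gamma(u_k)$ and $\|X_k^{-1}\delta(x_k)\|_\infty$ normalizations rather than the restricted scalings $\bar\alpha = \alpha/\gamma(u_{k,N})$ and $\bar\beta = \beta/\|v_{k,N}\|_\infty$ appearing on the right-hand side.

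Second, I would pass from the global scalings to the restricted ones $\bar\alpha,\bar\beta$ and introduce the translation $u_{k,N} = w_{k,N} - v_{k,N} + \tfrac{1}{p}e$ which is just the definition of $w_{k,N}$. Expanding $\|u_{k,N}\|^2$ in these variables produces the quadratic combination $\|w_{k,N}\|^2 + \sigma_k^2 - w_{k,N}^Tv_{k,N}$ together with linear cross-terms $\tfrac{1}{p}e^Tw_{k,N}$ and $\tfrac{1}{p}e^Tv_{k,N}$, which I would christen $\delta_k$ and $\omega_k$ respectively. The passage from $\bar\alpha$ to $\theta = p\bar\alpha/(p-\bar\alpha)$ arises from pulling a factor of $1-\bar\alpha/p$ out of the argument of the first logarithm so that the residual quadratic form inherits the $p/(p-\bar\alpha)$ prefactor; the same pull-out converts $\bar\beta$ to $\phi$ and explains the shifted coefficient $\phi-\theta$ of $(v_k)_j$ in the per-coordinate logarithm. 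The $B$-block contribution, namely $\|u_{k,B}\|^2$, the discrepancy between $\gamma(u_k)$ and $\gamma(u_{k,N})$, and the analogous terms in $v$, is collected into the residual $\epsilon_k$.

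The main obstacle will be verifying the summability claims $\sum|\epsilon_k|,\sum|\delta_k|,\sum|\omega_k|<\infty$. Each of these residuals is controlled by quantities that vanish geometrically: $\|u_{k,B}\|$ and $\|v_{k,B}\|$ are controlled by complementary slackness $X_ks_k\to 0$ from Theorem \ref{theorem-3} combined with the bound on $\|x_{k,B}-x^*_B\|$ from Theorem \ref{theorem-4}; the duality-gap residual $|e^Tu_{k,N}-1|$ decays at the same rate; and the ratio $\gamma(u_k)/\gamma(u_{k,N})$ approaches one correspondingly. The linear convergence of $c^Tx_k - c^Tx^*$ established in Theorem \ref{theorem-7} supplies a geometric majorant, from which absolute summability follows by a routine comparison. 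A natural consistency check throughout is to set $\beta=0$: the Nesterov pieces $v_{k,N}$, $h_k$, $\phi$ drop out and the identity must collapse to the classical Tsuchiya-Saigal formula for the local potential of standard AFS, which pins down the bookkeeping in the general case.
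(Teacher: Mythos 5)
Your proposal follows essentially the same route as the paper: substitute the GAFS update into $F_N$, reduce the global normalizations to the restricted ones via $\gamma(u_k)=\gamma(u_{k,N})$ and $\|v_k\|_\infty=\|v_{k,N}\|_\infty$, rewrite $u_{k,N}=w_{k,N}-v_{k,N}+\tfrac{1}{p}e$, factor out $\tfrac{p-\bar\alpha}{p}$ (which cancels between the two logarithm blocks and produces $\theta$ and $\phi$), and obtain the summability of the residuals from the geometric decay supplied by Theorem \ref{theorem-7} exactly as the paper does in Lemma \ref{lemma-9} and Theorem \ref{theorem-8}. The only deviation is bookkeeping: in the paper $\delta_k$ is defined by $e^Tu_{k,N}=1+\delta_k$ and $\omega_k=e^Tv_{k,N}$, so $e^Tw_{k,N}=\delta_k+\omega_k$ rather than $\delta_k$ alone, but this relabeling does not affect the argument.
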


\begin{proof}  Using the update formula \eqref{eq:5} and definition \eqref{eq:30}, we have, 
\begin{align}
\frac{c^Tx_{k+1}- c^Tx^*}{c^Tx_{k}- c^Tx^*} & = 1- \alpha \frac{c^TX_k^2s_k}{\gamma(X_ks_k) (c^Tx_k-c^Tx^*)} -\beta \frac{c^Tx_{k-1}-c^Tx_k}{\|X_k^{-1}\delta(x_k)\|_{\infty} (c^Tx_k-c^Tx^*)} \nonumber \\
& = 1- \alpha\frac{\|u_k\|^2}{\gamma(u_k)} - \beta \frac{h_k}{\|v_k\|_{\infty}} \  = \ 1 - \bar{\alpha} \|u_k\|^2 - \bar{\beta} h_k \label{eq:36}
\end{align}
And also for all $j \in N$, we have,
\begin{align}
 \frac{(x_{k+1})_j}{(x_{k})_j} \ & = \ 1- \alpha \frac{(X_ks_k)_j}{\gamma(X_ks_k)} - \beta \frac{\left[X_k^{-1}\delta(x_k)\right]_j}{\|X_k^{-1}\delta(x_k)\|_{\infty}} \nonumber \\
 & = 1- \alpha\frac{(u_k)_j}{\gamma(u_k)} - \beta \frac{(v_k)_j}{\|v_k\|_{\infty}} \ = \ 1 - \bar{\alpha} (u_k)_j - \bar{\beta} (v_k)_j \label{eq:37}
\end{align}
Now from part 2(a) of Lemma \ref{lemma-9} , there exist a $L \geqslant 1$ such that for all $k \geqslant L$, we have,
\begin{align}
 1 - & \bar{\alpha} \|u_k\|^2 - \bar{\beta} h_k \  = \ 1 - \bar{\alpha} \ \| \ w_{k,N} - v_{k,N} + \frac{1}{p} e \ \|^2 -\bar{\alpha} \epsilon_k - \bar{\beta} h_k \nonumber \\
  = \ & 1- \frac{\bar{\alpha}}{p} - \bar{\alpha} (\|w_{k,N}\|^2 +\|v_{k,N}\|^2) + \frac{2\bar{\alpha}}{p} e^T(v_{k,N}-w_{k,N}) \nonumber\\
  & \qquad \qquad \qquad \qquad \qquad \qquad  + 2 \bar{\alpha}  v_{k,N}^T w_{k,N} - \bar{\alpha} \epsilon_k - \bar{\beta} h_k \nonumber \\
 = \ & \frac{p-\bar{\alpha}}{p} \left[1- \theta \left(\|w_{k,N}\|^2 +\sigma_k^2 + \epsilon_k - 2  v_{k,N}^T w_{k,N} + \frac{2}{p} (\delta_k - \omega_k) \right) - \phi h_k\right] \label{eq:38}
\end{align}
This is a simplification of equation \eqref{eq:36}. Also simplifying equation \eqref{eq:37}, we have,
\begin{align}
1 - \bar{\alpha} (u_k)_j - \bar{\beta} (v_k)_j & = 1- \frac{\bar{\alpha}}{p} - \bar{\alpha} (w_k)_j + \bar{\alpha} (v_k)_j - \bar{\beta} (v_k)_j \nonumber \\
&= \frac{p-\bar{\alpha}}{p} \left[1- \theta (w_k)_j - (\phi- \theta) (v_k)_j \right] \label{eq:39}
\end{align}
Using these two identities from \eqref{eq:38} and \eqref{eq:39}, we have the desired result of Theorem \ref{theorem-9}.
\end{proof}

We know from the problem structure and assumptions that the sequence $\{c^Tx_k\}$ is bounded. In the next Theorem (Theorem \ref{theorem-10}) we show that with $\alpha, \beta \in Q$, the dual sequence converges to the analytic center of the optimal face of dual polytope. As defined by Saigal \cite{saigal:1996}, with $D \overset{\underset{\mathrm{def}}{}}{=} \{(y,s) : \ A_B^Ty = c_B, A_N^Ty+ s_N = c_N, s_B = 0\} $, we define the \textit{Analytic Center Problem} (ACP) of the optimal dual face as the solution of $(y^*,s^*)$ to the following problem:
\begin{align}
 \text{max} \ & \ \sum\limits_{j \in N} \log s_j \nonumber \\
 & (y,s) \in D  \label{eq:40} \\
 & \  s_N > 0 \nonumber
\end{align}

\begin{theorem}
\label{theorem-10} If $\alpha, \beta \in Q$, then there exist vectors $x^*, y^*$ and $s^*$ such that the sequences $\{x_k\}$, $\{y_k\}$ and $\{s_k\}$ generated by the GAFS algorithm converges to $x^*, y^*$ and $s^*$, respectively, i.e.,
\begin{enumerate}
\item $x_k \rightarrow x^*$,
\item $y_k \rightarrow y^*$,
\item $s_k \rightarrow s^*$
\end{enumerate}
Where  $x^*, y^*$ and $s^*$ are the optimal solutions of the respective primal and dual problems, and they also satisfy the strict complementary slackness property. Furthermore, the dual pair $(y^*,s^*)$ converges to the analytic center of the optimal dual face and the primal solution $x^*$ converges to the relative interior of the optimal primal face.
\end{theorem}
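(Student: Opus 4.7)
The plan is to complete the convergence analysis by leveraging the local potential function identity of Theorem \ref{theorem-9}. Primal convergence $x_k \to x^*$ to an optimal point is already secured by Theorem \ref{theorem-4}, so the new work is (i) establishing boundedness and convergence of the dual iterates $(y_k, s_k)$, (ii) verifying strict complementary slackness of the limit pair, and (iii) identifying $(y^*, s^*)$ as the analytic center of the optimal dual face defined by \eqref{eq:40}.

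First I would show that $F_N(x_k)$ stays bounded above. Expanding both sides of \eqref{eq:35} via the Taylor approximation $\log(1-t) = -t - t^2/2 + O(t^3)$, the first-order contributions cancel using the defining identity $w_{k,N} = u_{k,N} + v_{k,N} - \tfrac{1}{p}e$, leaving a quadratic remainder. The condition $\alpha + \beta \leq 2/3$ characterizing $Q$ is exactly what makes the coefficient arithmetic in this remainder nonpositive, in direct analogy with the classical step-size bound $\alpha \leq 2/3$ of Saigal \cite{saigal:1996} and Tsuchiya et al.\ \cite{tsuchiya:1995}. The leftover error terms $\epsilon_k$, $\delta_k$, $\omega_k$ are summable by Theorem \ref{theorem-9}, and the additional contribution from $h_k$ is controlled by $\sum_k \gamma_k < \infty$ (invoked already in Theorems \ref{theorem-3} and \ref{theorem-4}). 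Consequently $\sum_k (F_N(x_{k+1}) - F_N(x_k))^+ < \infty$, which gives a uniform upper bound $F_N(x_k) \leq F_{\max}$.

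This bound translates directly to $\prod_{j \in N}(x_k)_j \geq e^{-F_{\max}}(c^Tx_k - c^Tx^*)^p$. Combining this with the upper bound $(x_k)_i \leq R(c^Tx_k - c^Tx^*)$ for $i \in N$ supplied by Theorem \ref{theorem-5} and applying a pigeonhole over the $p$ indices yields a uniform positive lower bound of the form $(x_k)_j \geq c_0 (c^Tx_k - c^Tx^*)$ for every $j \in N$. Together with Theorem \ref{theorem-3}(2) (i.e.\ $X_k s_k \to 0$) this forces $(s_k)_j$ to remain bounded for $j \in N$, while for $j \in B$ the convergence $(x_k)_j \to x_j^* > 0$ drives $(s_k)_j \to 0$. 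Boundedness of $s_k$ then transfers to $y_k$ via $A^T y_k = c - s_k$ and the full row rank of $A$. Passing to a convergent subsequence and using that the feasibility relations are closed produces a dual feasible limit $(y^*, s^*)$ with $s_B^* = 0$ and $s_N^* > 0$, i.e.\ strict complementarity with $x^*$.

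The final and most delicate step is the identification of $(y^*, s^*)$ as the analytic center of the optimal dual face, i.e.\ the unique solution of \eqref{eq:40}. The KKT characterization of the analytic center demands a vector $\bar x_N^* > 0$ with $(\bar x_N^*)_j = 1/(s_j^*)$ for $j \in N$ and $A_N \bar x_N^* \in \mathrm{range}(A_B)$. Rewriting the normal equations $y_k = (A X_k^2 A^T)^{-1} A X_k^2 c$ after rescaling the non-basic coordinates by $1/(c^Tx_k - c^Tx^*)$ and using the two-sided bound $(x_k)_j/(c^Tx_k - c^Tx^*) \in [c_0, R]$ established above, I would pass to the limit along the lines of Tsuchiya--Monteiro \cite{tsuchiya:1995} to recover precisely this KKT system, forcing uniqueness of the accumulation point and hence full convergence. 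The main obstacle is the first step: the Nesterov-type term introduces cross contributions $v_{k,N}^T w_{k,N}$ in \eqref{eq:35} absent from the classical AFS analysis, and showing these are absorbed by the summable sequences $\epsilon_k, \delta_k, \omega_k$ requires exploiting the full tightness of the joint bound $\alpha + \beta \leq 2/3$ rather than treating $\alpha$ and $\beta$ separately.
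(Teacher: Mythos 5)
Your steps (i)--(ii) are sound in outline and broadly parallel the paper: the paper also combines the identity of Theorem \ref{theorem-9} with the lower bound on $F_N$ implied by Theorem \ref{theorem-5} (which gives $\sum_k \left(F_N(x_{k+1})-F_N(x_k)\right) > -\infty$), and your pigeonhole argument extracting $(x_k)_j \geqslant c_0\left(c^Tx_k - c^Tx^*\right)$ for $j \in N$ from a bound on $F_N$ is a legitimate route to boundedness of $\{s_k\}$, $\{y_k\}$ and to strict complementarity of subsequential limits.

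The genuine gap is in the final step. From boundedness of $F_N(x_k)$ you obtain only the two-sided bound $(x_k)_j/(c^Tx_k-c^Tx^*) \in [c_0, R]$ for $j \in N$ and compactness of the dual iterates; this yields convergent subsequences but does \emph{not} single out the analytic center among the possible accumulation points, so uniqueness (and hence convergence of the full sequences $\{y_k\},\{s_k\}$) does not follow. The paper's proof needs strictly more than boundedness: the two-case analysis (on the sign of $\theta\gamma(w_{k,N})+(\phi-\theta)\gamma(v_{k,N})$, using Lemma \ref{lemma-5} and the inequality $\tfrac{\alpha}{2(1-\alpha-\beta)}\leqslant 1$ from $\alpha+\beta\leqslant\tfrac{2}{3}$) produces the summability inequalities \eqref{eq:50}--\eqref{eq:51}, which together with the lower bound on the telescoping sum force $w_{k,N}\rightarrow 0$, i.e.\ $u_{k,N}=\frac{X_{k,N}s_{k,N}}{c^Tx_k-c^Tx^*}\rightarrow \frac{1}{p}e$ as in \eqref{eq:52}. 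It is exactly this asymptotic relation that lets one write $(s_{p_k})_j \rightarrow 1/a_j$ with $a_j = \lim p(x_{p_k})_j/(c^Tx_{p_k}-c^Tx^*)$ and verify the KKT system of \eqref{eq:40}, after which Lemma \ref{lemma-11} gives uniqueness. Your proposal defers precisely this to ``passing to the limit along the lines of Tsuchiya--Monteiro,'' but that passage itself presupposes $u_{k,N}\rightarrow e/p$; the normal equations plus the ratio bounds alone do not control the rate at which $s_{k,B}\rightarrow 0$ relative to $c^Tx_k-c^Tx^*$, so the limit KKT system cannot be recovered from them. Relatedly, your claim that the first-order terms in the Taylor expansion of \eqref{eq:35} ``cancel'' and leave a nonpositive quadratic remainder is not what happens: the cross term $v_{k,N}^Tw_{k,N}$ and the linear term $2p\theta\epsilon\|w_{k,N}\|$ survive, which is why the paper must run the two-case argument rather than a one-line sign check.
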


\begin{proof}
Since, $ \log(1-a) < -a$, we can find a $L_1\geqslant 1$ such that for all $k \geqslant L_1$,
\begin{multline}
F_N(x_{k+1})- F_N(x_k) \  \leqslant  \ -p \theta \left[\|w_{k,N}\|^2 +\sigma_k^2 + \epsilon_k - 2 v_{k,N}^T w_{k,N} + \frac{2}{p} (\delta_k - \omega_k)\right] \\
- p \phi h_k - \sum\limits_{j \in N} \log \left(1- \theta (w_k)_j - (\phi- \theta) (v_k)_j\right) \label{eq:41}
\end{multline}
Now, we analyze equation \eqref{eq:41} for two cases based on the sign of $ \theta \gamma(w_{k,N}) + (\phi-\theta)\gamma(v_{k,N})$. \\
\textbf{Case 1: $\ \ \theta \gamma(w_{k,N}) + (\phi-\theta)\gamma(v_{k,N}) \ \leqslant \ 0 $}\\
Then we must have $\theta (w_k)_j + (\phi- \theta) (v_k)_j \leqslant 0$ for all $j \in N$,  which implies,
\begin{align}
\log \left(1- \theta (w_k)_j - (\phi- \theta) (v_k)_j\right) \ \geqslant \ 0 \quad \text{for all} \quad j \in N \label{eq:42}
\end{align}
Using part (d) of Theorem \ref{theorem-8} and equation \eqref{eq:41} and \eqref{eq:42}, for all $k \geqslant L_1$ we have,
\begin{align}
F_N(x_{k+1}) & - F_N(x_k)  \nonumber \\ 
& \leqslant -p \theta \|w_{k,N}\|^2 + 2p \theta  v_{k,N}^T w_{k,N} - \theta (2 \delta_k + p \epsilon_k- 2\omega_k+ p \sigma_k^2) - p \phi h_k \nonumber \\
& \leqslant  -p \theta \|w_{k,N}\|^2 + 2p \theta \epsilon \|w_{k,N}\|- \theta (2 \delta_k + p \epsilon_k- 2\omega_k+ p \sigma_k^2) - p \phi h_k \label{eq:43}
\end{align}
\textbf{Case 2: $ \ \ \theta \gamma(w_{k,N}) + (\phi-\theta)\gamma(v_{k,N}) \ > \ 0 $}\\ 
Let $\bar{\epsilon} > 0$,  since $\gamma(v_{k,N}) = (c^Tx_k - c^Tx^*) \rightarrow 0$, there exist a $L_2 \geqslant 1$ such that for all $k \geqslant L_2$,
\begin{align*}
\gamma(v_{k,N}) < \bar{\epsilon}
\end{align*}
Then, using the condition of Case 2, we have for all $k \geqslant L_2$,
\begin{align}
\gamma(w_{k,N}) - \gamma(v_{k,N}) & \ > \ (1-\frac{\phi}{\theta}) \gamma(v_{k,N}) - \gamma(v_{k,N}) \ = \ - \frac{\phi}{\theta} \gamma(v_{k,N}) \ > \ - \frac{\phi}{\theta} \bar{\epsilon} \label{eq:44}
\end{align}
Since in equation \eqref{eq:44}, our choice of $\bar{\epsilon} > 0$ is arbitrary, this is true for any $\bar{\epsilon} > 0$, which implies for all $k \geqslant L_2$, we must have $\gamma(w_{k,N}) - \gamma(v_{k,N}) > 0$. Then, from the definition, we have,
\begin{align}
\label{eq:45}
\gamma(u_{k,N}) = \gamma(w_{k,N}- v_{k,N}+ \frac{1}{p}e) \geqslant \frac{\gamma(w_{k,N}- v_{k,N})+1}{p} \geqslant \frac{\gamma(w_{k,N})- \gamma(v_{k,N})+1}{p} 
\end{align}
As a simple consequence of the definition \eqref{eq:35} and the condition $\alpha, \beta \in Q$, we have,
\begin{align}
\frac{\theta}{2(1-\theta \gamma(w_{k,N})-(\phi-\theta)\gamma(v_{k,n}))} & = \frac{\bar{\alpha}}{2(1-\alpha-\beta)} = \frac{\alpha}{2(1-\alpha-\beta)} \frac{1}{\gamma(u_{k,N})} \leqslant \frac{1}{\gamma(u_{k,N})}  \nonumber \\
\frac{\phi}{2(1-\theta \gamma(w_{k,N})-(\phi-\theta)\gamma(v_{k,n}))} & = \frac{\bar{\beta}}{2(1-\alpha-\beta)} = \frac{\beta}{2(1-\alpha-\beta)} \frac{1}{\gamma(v_{k,N})} \leqslant \frac{1}{\gamma(v_{k,N})} \label{eq:46}
\end{align}
Now, as $\theta (w_k)_j+ (\phi-\theta)(v_k)_j \leqslant \theta \gamma(w_{k,N}) + (\phi-\theta)\gamma(v_{k,N}) $ for all $j \in N$, using Lemma \ref{lemma-5} and equation \eqref{eq:46}, we have,
\begin{align}
- \sum\limits_{j \in N} & \log \left(1- \theta (w_k)_j -  (\phi- \theta) (v_k)_j\right) \nonumber \\
&   \leqslant \ \theta \delta_k + (\phi-\theta) \omega_k + \frac{\|\theta w_{k,N}+ (\phi -\theta) v_{k,N}\|^2}{2(1-\theta \gamma(w_{k,N})-(\phi-\theta)\gamma(v_{k,n}))} \nonumber \\
& \leqslant \ \theta \delta_k + (\phi-\theta) \omega_k + \frac{\theta \|w_{k,N}\|^2 }{\gamma(u_{k,N})} + 2\frac{\epsilon(\phi-\theta) \|w_{k,N}\|}{\gamma(u_{k,N})} + \frac{(\phi-\theta)^2 \sigma_k^2 }{\theta \gamma(u_{k,N})} \label{eq:47}
\end{align}
Now, by combining equation \eqref{eq:41} and \eqref{eq:47}, we have,
\begin{multline}
F_N(x_{k+1})- F_N(x_k)   \ \leqslant \ \theta \|w_{k,N}\|^2 [-p + \frac{1}{\gamma(u_{k,N})}] + 2 \epsilon \|w_{k,N}\| (p \theta + \frac{\phi -\theta}{\gamma(u_{k,N})}) \\
+ \sigma_k^2 (-p \theta + \frac{(\phi -\theta)^2}{\theta \gamma(u_{k,N})})- p \theta \epsilon_k - p \theta \delta_k + (\phi + \theta) \omega_k - p \phi h_k \label{eq:48}
\end{multline}
Using the lower bound of equation \eqref{eq:45}, we can easily find,
\begin{align*}
-p \theta + \frac{\theta}{\gamma(u_{k,N})} \ & \leqslant \ - p \theta \frac{\gamma(w_{k,N}) - \gamma(v_{k,N})}{1+\gamma(w_{k,N}) - \gamma(v_{k,N})} = - p \bar{a} \\
p \theta + \frac{\phi -\theta}{\gamma(u_{k,N})} \ & \leqslant \ p \frac{\phi + \theta(\gamma(w_{k,N}) - \gamma(v_{k,N}))}{1+\gamma(w_{k,N}) - \gamma(v_{k,N})} = p \bar{b}
\end{align*}
Where, by the definition of  $\bar{a}$ and $\bar{b}$ given above, we can show that $\bar{a}, \bar{b} > 0$ are both finite constants. Now, from Theorem \ref{theorem-5}, we see that $\sum\limits_{k =L}^{\infty} (F_N(x_{k+1})- F_N(x_k)) > - \infty$. Also from Theorem \ref{theorem-9}, we get the following relation,
\begin{align}
\label{eq:49}
\sum\limits_{k =L}^{\infty} \left(|\delta_k| + \epsilon_k + \omega_k + h_k + \sigma_k^2\right) < \infty
\end{align}
Considering equations \eqref{eq:43}, \eqref{eq:48} and \eqref{eq:49}, for all $k \geqslant L$, we have,
\begin{align}
 & \ \text{Case 1:} \quad \quad  \sum\limits_{k =L}^{\infty} \|w_{k,N}\|^2 - 2 \epsilon \sum\limits_{k =L}^{\infty} \|w_{k,N}\| \ < \ \infty \label{eq:50} \\
 & \ \text{Case 2:} \quad \quad  \bar{a} \sum\limits_{k =L}^{\infty} \|w_{k,N}\|^2 - \bar{b} \sum\limits_{k =L}^{\infty} \|w_{k,N}\| \ < \ \infty \label{eq:51}
\end{align}
Both of the above cases, equation \eqref{eq:50} and \eqref{eq:51} imply that either the sequence $\{\gamma(w_{k,N})\}$ has a strictly positive/negative cluster point or $\lim_{k \rightarrow \infty }\gamma(w_{k,N}) = 0 $. If $\{\gamma(w_{k,N})\}$ has a cluster point then we must have, $\|w_{k,N}\| \rightarrow 0$.  Now, since $e^Tw_{k,N} = \delta_k$ and $\delta_k \rightarrow 0$, this implies whenever $\lim_{k \rightarrow \infty }\gamma(w_{k,N}) = 0 $, we must have, $w_{k,N} \rightarrow 0$. Either way, we have the following relationship,
\begin{align}
& \lim_{k \rightarrow \infty} w_{k,N} = 0  \ \ \Rightarrow \  \lim_{k \rightarrow \infty} u_{k,N} = \frac{1}{p} e \label{eq:52}
\end{align}
Now, for each $j \in N$, consider the sequences $\{\frac{(x_k)_j}{c^Tx_k-c^Tx^*}\}, \{\frac{(x_k)_i-(x^*)_i}{c^Tx_k-c^Tx^*}\}, \{y_k\}$ and $\{s_k\}$ for each $i \in B$. Let $s_{p_k} \rightarrow s^*$ for some sub-sequence $\{p_k\}$ of $k$. Since all of them are bounded for all $1 \leqslant i,j \leqslant n$. Thus using equation \eqref{eq:52}, we have,
\begin{align*}
 y_{p_k} \rightarrow y^*, \quad s_{p_k} \rightarrow s^*, & \quad \frac{p(x_{p_k})_j}{c^Tx_{p_k}-c^Tx^*} \rightarrow a_j \quad \text{for each} \quad j \in N \\
& p\frac{(x_{p_k})_i-(x^*)_i}{c^Tx_{p_k}-c^Tx^*} \rightarrow b_j \quad \text{for each} \quad j \in B 
\end{align*}
Considering equation \eqref{eq:13}, we know that $a_j > 0$ for all $j \in N$ and $(w_{p_k})_j \rightarrow 0$, for all $j \in N$ we have the following,
\begin{align*}
(s_{p_k})_j = \frac{c^Tx_{p_k}-c^Tx^*}{(x_{p_k})_j} \left((w_{p_k})_j - \frac{1}{p}\right) \rightarrow \ \frac{1}{a_j}
\end{align*}
Since from the definition of $D$, $A_Nx_{k,N}+ A_Bx_{k,B} = A_Bx_B^* + A_N * 0 = A_Bx_B^*$ holds and taking the limits, we see that $A_Na+ A_B b = 0$. This implies that $s_j = \frac{1}{a_J}$, for each $j \in N$ and $x = [x_B, x_N] = [-a, -b], \ s =[0, s_N^*], \ y = y^*$ solve the corresponding \textit{Karush Kahn Tucker} (KKT) conditions for the \textit{Analytic Center Problem} \eqref{eq:40}. Thus, $s_{p_k,B}$ converges to the analytic center for each sub-sequence, which in turn proves part (2) and (3) of Theorem \ref{theorem-10}. For proving the optimality, we notice that as $x^*, y^*$ and $s^*$ satisfies the primal and dual feasible criteria, respectively. They also satisfy   the complementary slackness property. Thus, $x^*, y^*$ and  $s^*$ are the optimal solutions for the respective primal and dual problems.
\end{proof}

\begin{remark}
Notice that, Theorem \ref{theorem-10} is a generalization of the original AFS algorithm. In this Theorem, if we consider $\beta = 0$ (without acceleration term), then it gives us the condition $\alpha \leqslant \frac{2}{3}$, which is in fact the respective bound for the original AFS (see \cite{saigal:1996}, \cite{tsuchiya:1995}).
\end{remark}
 
\begin{theorem}
\label{theorem-y} If $\alpha, \beta \in Q$, then there exist vectors $x^*, y^*,s^*$ such that the sequences $\{B(x_k)\}$, $\{y_k\}$ and  $\{s_k\}$ generated by the AAFS algorithm converges to $x^*, y^*$ and $s^*$ respectively, i.e.,
\begin{enumerate}
\item $B(x_k) \rightarrow x^*$,
\item $y_k \rightarrow y^*$,
\item $s_k \rightarrow s^*$
\end{enumerate}
Where  $x^*, y^*$ and $s^*$ are the optimal solutions of the respective primal and dual problems, and they also satisfy the strict complementary slackness property. Furthermore, the dual pair $(y^*,s^*)$ converges to the analytic center of the optimal dual face and the primal solution $x^*$ converges to the relative interior of the optimal primal face.
\end{theorem}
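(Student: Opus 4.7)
The plan is to reduce Theorem~\ref{theorem-y} almost entirely to the previously established Theorems~\ref{theorem-10} and~\ref{theorem-x}. The key structural observation is that, on inspecting Algorithm~\ref{alg:shank AFS}, the primal iterate $x_{k+1}$, the dual estimates $y_k, s_k$, and the intermediate point $z_k$ are produced by exactly the same formulas as in Algorithm~\ref{alg:acc AFS}; the Shanks step is applied only to form the auxiliary vector $B(x_k)$ used in the stopping criterion and has no feedback into the underlying iteration. Hence the triple $(\{x_k\}, \{y_k\}, \{s_k\})$ generated by AAFS coincides iterate by iterate with the one generated by GAFS on the same input data, and the same constant $x^*$ arises as the primal limit in both cases.

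Given this identification, I would first invoke Theorem~\ref{theorem-10} on these shared sequences to obtain $x_k \to x^*$, $y_k \to y^*$, and $s_k \to s^*$ under the hypothesis $\alpha, \beta \in Q$, together with its four qualitative conclusions: primal/dual optimality of $(x^*, y^*, s^*)$, strict complementary slackness of $(x^*, s^*)$, convergence of $(y^*, s^*)$ to the analytic center of the optimal dual face, and $x^*$ lying in the relative interior of the optimal primal face. Because the AAFS limits of $y_k$ and $s_k$ are literally those produced by GAFS, parts~(2) and~(3) of the claim, together with all four qualitative conclusions about the dual limit and the location of $x^*$, transfer verbatim.

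For part~(1), I would appeal to Theorem~\ref{theorem-x}, which already shows that $\lim_{k \to \infty} B(x_k) = x^*$ with each $B(x_k)$ strictly positive; combining this with the fact from the preceding step that the same $x^*$ is a primal optimum lying in the relative interior of the optimal primal face closes the argument. The only genuine subtlety, and really the sole point that needs to be spelled out, is the bookkeeping remark that the Shanks transformation is a pure post-processing operation that does not perturb the dynamics generating $\{x_k\}$; once that is stated, the whole theorem collapses into a short citation chain through Theorems~\ref{theorem-x} and~\ref{theorem-10}, and no new analytical obstacle appears, in contrast to the local-potential-function analysis that carried the weight of the proof of Theorem~\ref{theorem-10}.
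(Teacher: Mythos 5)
Your proposal is correct and follows essentially the same route as the paper: the paper likewise reduces the statement to Theorem~\ref{theorem-10} for the shared sequences $\{x_k\},\{y_k\},\{s_k\}$, uses the SST definition (the content of Theorem~\ref{theorem-x}) to get $B(x_k)\to x^*$, and notes that the dual updates are never computed from $B(x_k)$ --- exactly your ``pure post-processing'' observation. Your write-up is, if anything, slightly more explicit than the paper's about why the AAFS and GAFS iterates coincide.
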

\begin{proof} From Theorem \ref{theorem-10}, we know that the sequence $\{x_k\}, \{y_k\}$ and  $\{s_k\}$ generated by the GAFS converges to $x^*, y^*$ and $s^*$, respectively. Then, using the definition \eqref{eq:4} and the basic idea of SST, we can  conclude that for all $j =1,2,...,n$ and $\alpha, \ \beta \in Q$ the following relation holds,
\begin{align*}
\lim_{k \to \infty} (B(x_k))_j = (x^*)_j, \quad \lim_{k \to \infty} y_k = y^*, \quad \lim_{k \to \infty} s_k = s^*
\end{align*}
Since, this holds for all $j =1,2,...,n$, we can prove $\lim_{k \to \infty} B(x_k) = x^*$. The last part is satisfied as we do not update the dual sequences at each iteration based on the sequence $\{B(x_k)\}$.
\end{proof}

\section{Numerical Experiments}
\label{sec:num}

In this section, we verified  the efficiency of the proposed variants of primal Affine Scaling algorithm presented in Section 2 through several numerical experiments. All of the experiments were carried out in a Intel Xeon Processor E5-2670, with double processors each with 20 MB cache, 2.60 GHz, 8.00 GT/s Intel QPI and 64 GB memory CPU. For simplicity of exposition, we considered three pairs of step sizes $(\alpha, \beta) = (0.4,0.2)$, $ (0.5,0.1)$ and $ (0.55,0.1)$, respectively for our experimental setup. We considered three types of LP problems: (1): \textit{Randomized Gaussian LP}, (2): \textit{Netlib Sparse LP} (real life instances \cite{netlib}) and (3): \textit{Randomized LP} with increasing $n$, constant $m$. We evaluated the performance of GAFS and AAFS with a long-step version of classical AFS. We considered duality gap tolerance $\epsilon $ as $10^{-3}$, $10^{-4}$ and $10^{-7}$ respectively and compared the results of our algorithms with the commercial LP solver (CPLEX-dual simplex \cite{cplex}) and with the \textit{MATLAB Optimization Toolbox} function \textit{fmincon} \footnote{For simplification, we used `fmin' in all of the figures and tables} \cite{fmincon}. The \textit{fmincon} function allows us to select `Interior Point' algorithm for a basic comparison as AFS is also an Interior Point Method.

\subsection{Comparison among AFS, GAFS and AAFS for dense data:}
\label{subsec:1}
The random dense data for these tests are generated as follows: All elements of the data matrix $A \in \R^{m \times n}$ and the cost vector $c \in \R^n$ are chosen to be \textit{i.i.d.} Gaussian $ \sim \mathcal{N} (-9,9)$. The right hand side $b \in \mathbb{R}^m$ is generated at random from corresponding distribution, but we made sure that $b \in \mathcal{R}(\mathbf{A})$. For that, we generated two vectors $x_1, x_2 \in \R^n$ at random from the corresponding distributions, then multiplied them by $A$ and set $b$ as a convex combination of those two vectors. We also made sure that the generated problems have bounded feasible solutions. We ran all algorithms 15 times and reported the averaged performance.
\vspace{- 0.2 cm}
\begin{figure}[H]
\begin{subfigure}{0.32\textwidth}
\includegraphics[width=\linewidth]{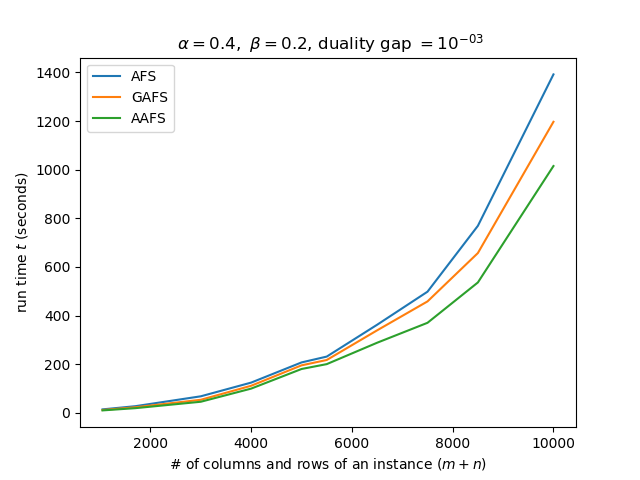}
\caption{$\alpha = 0.4, \ \beta = 0.2$}
\end{subfigure}
\hspace*{\fill}
\begin{subfigure}{0.32\textwidth}
\includegraphics[width=\linewidth]{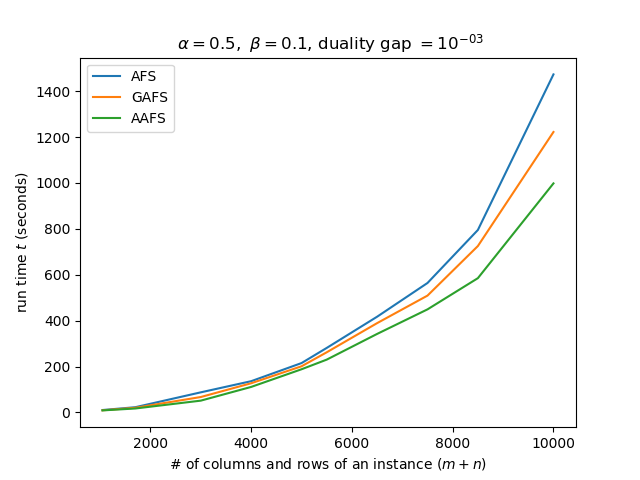}
\caption{$\alpha = 0.5, \ \beta = 0.1$}
\end{subfigure}
\hspace*{\fill}
\begin{subfigure}{0.32\textwidth}
\includegraphics[width=\linewidth]{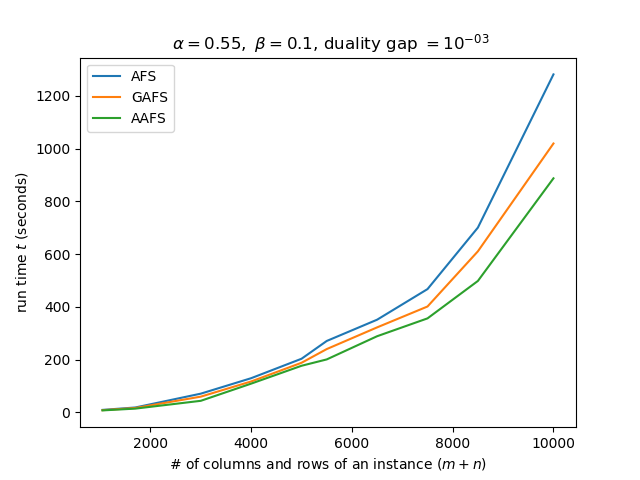}
\caption{$\alpha = 0.55, \ \beta = 0.1$}
\end{subfigure}
\caption{ Number of columns and rows ($m+n$) vs run time (duality gap $= 10^{-3}$, dense data)} \label{fig:1}
\end{figure}

\vspace{- 0.25 cm}

\begin{figure}[H]
\begin{subfigure}{0.32\textwidth}
\includegraphics[width=\linewidth]{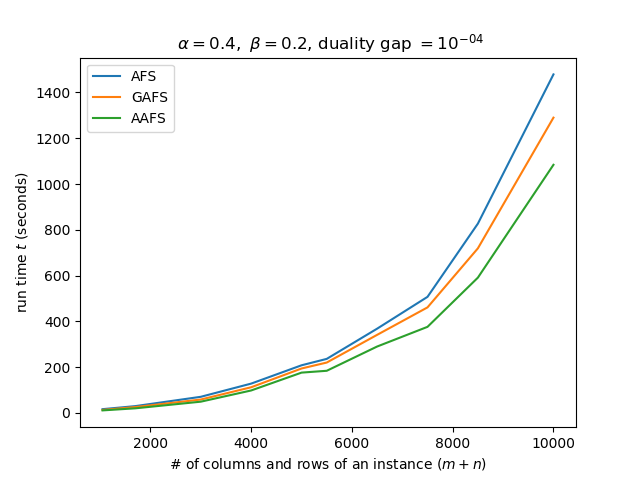}
\caption{$\alpha = 0.4, \ \beta = 0.2$}
\end{subfigure}
\hspace*{\fill}
\begin{subfigure}{0.32\textwidth}
\includegraphics[width=\linewidth]{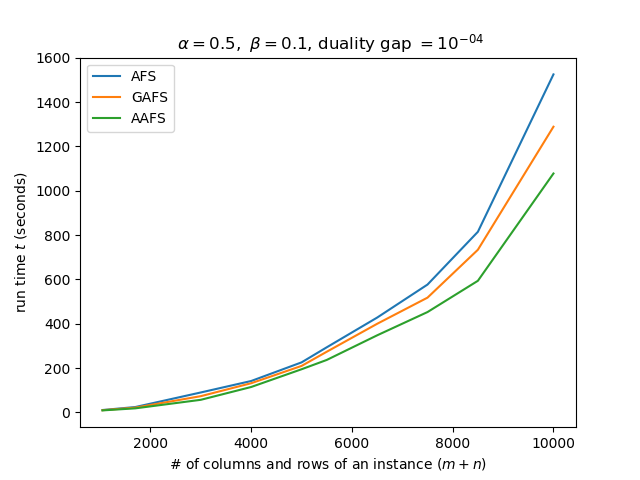}
\caption{$\alpha = 0.5, \ \beta = 0.1$}
\end{subfigure}
\hspace*{\fill}
\begin{subfigure}{0.32\textwidth}
\includegraphics[width=\linewidth]{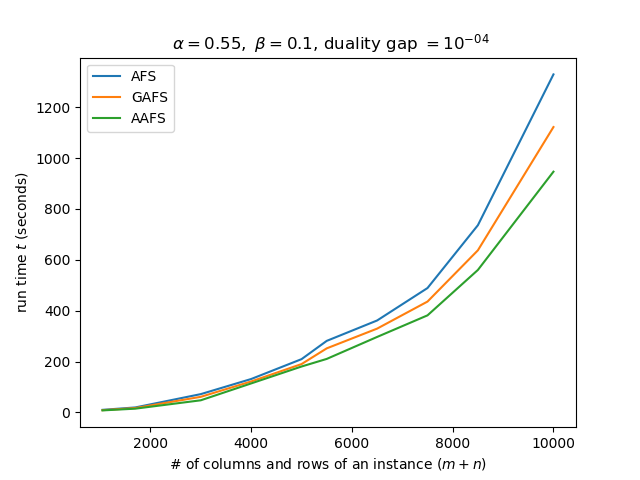}
\caption{$\alpha = 0.55, \ \beta = 0.1$}
\end{subfigure}
\caption{ Number of columns and rows ($m+n$) vs run time (duality gap $= 10^{-4}$, dense data)} \label{fig:2}
\end{figure}

\vspace{- 0.25 cm}

\begin{figure}[H]
\begin{subfigure}{0.32\textwidth}
\includegraphics[width=\linewidth]{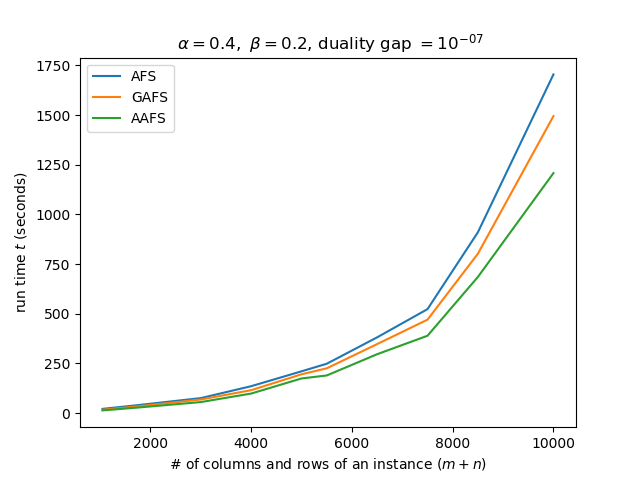}
\caption{$\alpha = 0.4, \ \beta = 0.2$}
\end{subfigure}
\hspace*{\fill}
\begin{subfigure}{0.32\textwidth}
\includegraphics[width=\linewidth]{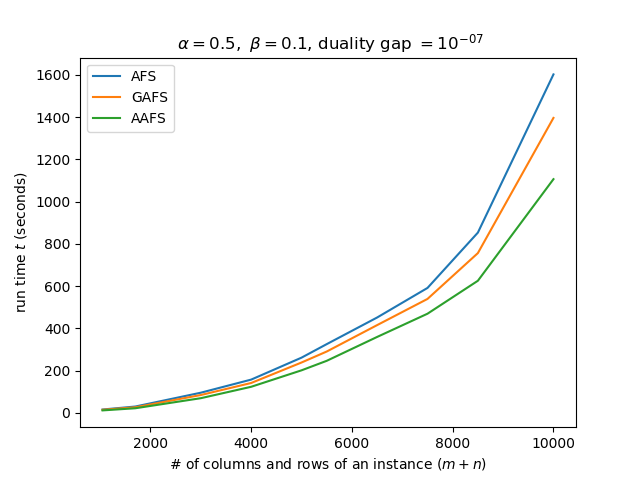}
\caption{$\alpha = 0.5, \ \beta = 0.1$}
\end{subfigure}
\hspace*{\fill}
\begin{subfigure}{0.32\textwidth}
\includegraphics[width=\linewidth]{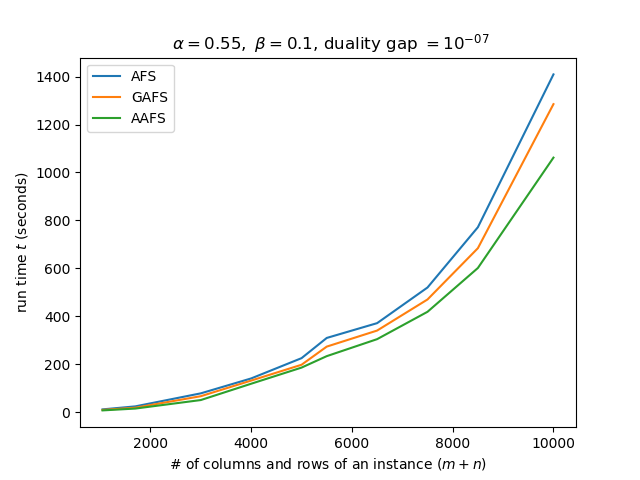}
\caption{$\alpha = 0.55, \ \beta = 0.1$}
\end{subfigure}
\caption{ Number of columns and rows ($m+n$) vs run time (duality gap $= 10^{-7}$, dense data)} \label{fig:3}
\end{figure}

In the above figures (Figure \ref{fig:1}, \ref{fig:2} and \ref{fig:3}), we compared AFS, GAFS and AAFS for different sets of $\alpha, \beta$ with diferent duality gaps. Our results show that the proposed variant algorithms reduced the runtime significantly. Furthermore, the reduction of runtime increases as the size of the instance gets larger (see Figure \ref{fig:1}, \ref{fig:2} and \ref{fig:3}). From figure \ref{fig:1}, \ref{fig:2} and \ref{fig:3}, we can conclude that the GAFS is faster than the original AFS irrespective of the size of the instances. Similarly, AAFS further accelerates the convergence of GAFS as the runtime decreases for all the instances. This is due to the integration of SST with the acceleration process and it converges much faster than the original AFS algorithm.

Now, we compared the performance of our proposed algorithms and AFS with the standard LP solvers \textit{fmincon} and `CPLEX-dual simplex'. For the comparison, we chose the best $\alpha, \beta$ pair from the above results ($\alpha = 0.55, \ \beta = 0.1$, validates our claim that `good result for larger $(\alpha+ \beta) \in Q$') and compared them for the duality gaps. At first, we presented the comparison graph for all the algorithms (AFS, GAFS, AAFS, \textit{fmincon} and CPLEX), where it is evident that performance of original AFS compare to \textit{fmincon} and CPLEX solver is very poor. However, our proposed acceleration scheme has significantly improve this performance gap. For  better understanding and fairness benchmark comparison, we compared our proposed AAFS with classical AFS and \textit{fmincon} (this comparison is fair as \textit{fmincon} uses the raw Barrier function method and CPLEX uses dual simplex \cite{cplex:h}, \cite{cplex}).
\vspace{-0.25 cm}
\begin{figure}[H]
\begin{subfigure}{0.32\textwidth}
\includegraphics[width=\linewidth]{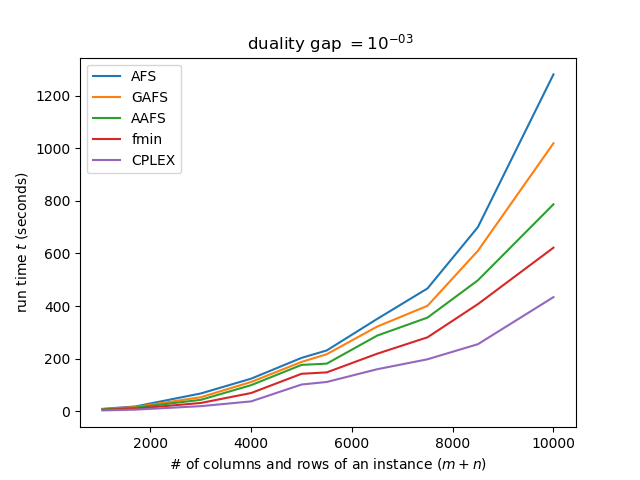}
\caption{duality gap $= 10^{-3}$}
\end{subfigure}
\hspace*{\fill}
\begin{subfigure}{0.32\textwidth}
\includegraphics[width=\linewidth]{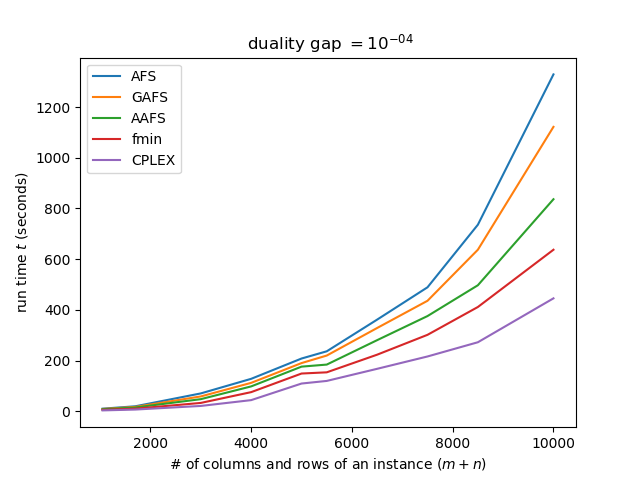}
\caption{duality gap $= 10^{-4}$}
\end{subfigure}
\hspace*{\fill}
\begin{subfigure}{0.32\textwidth}
\includegraphics[width=\linewidth]{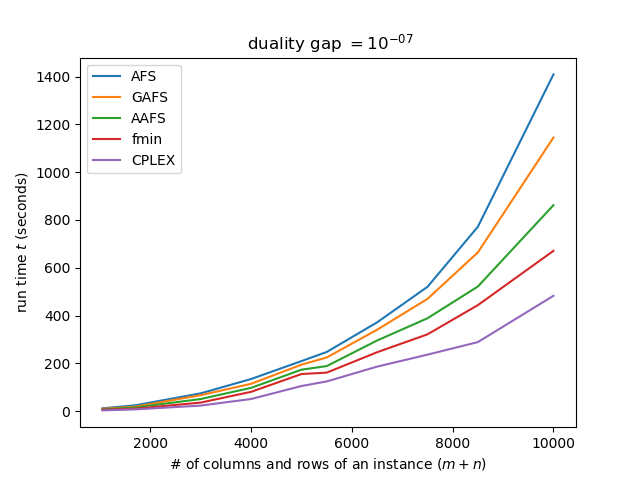}
\caption{duality gap $= 10^{-7}$}
\end{subfigure}
\caption{Comparison with `\textit{fmincon}' and `CPLEX' (dense data)} \label{fig:4}
\end{figure} 
\vspace{-0.25 cm}
\begin{figure}[H]
\begin{subfigure}{0.32\textwidth}
\includegraphics[width=\linewidth]{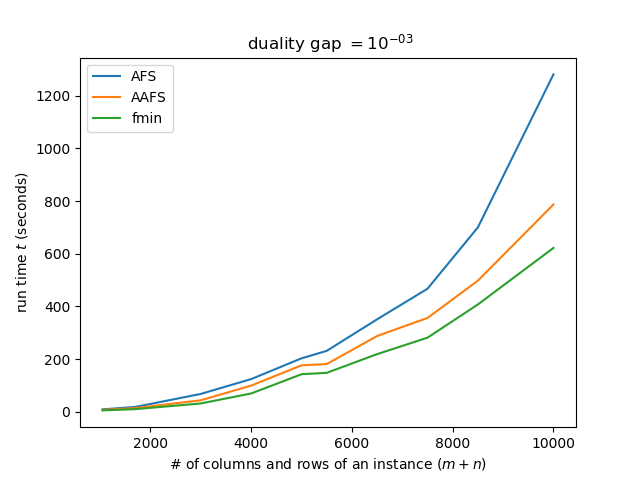}
\caption{duality gap $= 10^{-3}$}
\end{subfigure}
\hspace*{\fill}
\begin{subfigure}{0.32\textwidth}
\includegraphics[width=\linewidth]{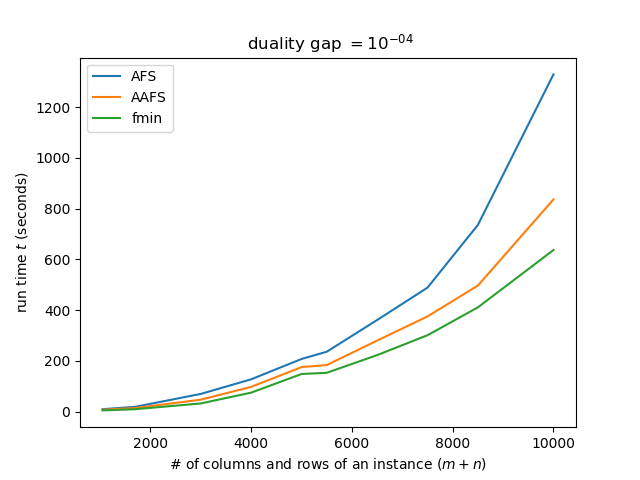}
\caption{duality gap $= 10^{-4}$}
\end{subfigure}
\hspace*{\fill}
\begin{subfigure}{0.32\textwidth}
\includegraphics[width=\linewidth]{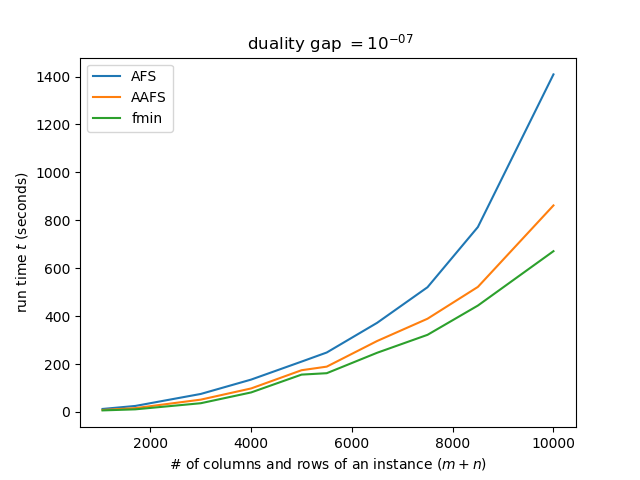}
\caption{duality gap $= 10^{-7}$}
\end{subfigure}
\caption{Comparison of `AAFS' with `AFS' and `\textit{fmincon}' (dense data)} \label{fig:5}
\end{figure}

\begin{table}[H]
\centering
\caption{Comparison among `GAFS', AAFS', `AFS and \textit{fmin}'$^a$}
\label{table:1 duality = -03}
\begin{tabular}{@{}ccccccc@{}}
\hline
Instances & \multicolumn{2}{c}{Dimension} & AFS & GAFS & AAFS & \textit{fmin} \\ \hline
 & $m$ & $n$ & Time & Time & Time & Time \\
1 & 400 & 650 & 9.02 & 7.96 & 7.11 & 5.07 \\
2 & 700 & 1000 & 18.16 & 16.22 & 13.87 & 9.87 \\
3 & 1000 & 2000 & 67.64 & 53.34 & 43.46 & 31.47 \\
4 & 1500 & 2500 & 123.95 & 111.40 & 99.30 & 69.34 \\
5 & 2000 & 3000 & 202.95 & 187.67 & 176.34 & 142.47 \\
6 & 2500 & 3000 & 231.20 & 216.90 & 181.03 & 147.79 \\
7 & 3000 & 3500 & 351.00 & 321.88 & 287.60 & 218.73 \\
8 & 3500 & 4000 & 467.07 & 401.00 & 356.10 & 281.20 \\
9 & 4000 & 4500 & 700.13 & 610.20 & 497.86 & 406.39 \\
10 & 4500 & 5500 & 1281.00 & 1019.00 & 787.20 & 619.31 \\ \hline
\end{tabular} \\
\footnotesize{$^a$ MATLAB Optimization Toolbox}
\end{table}

Based on the above figure (Figure \ref{fig:5}) and Table \ref{table:1 duality = -03}, we concluded that AFS takes on average \footnote{Note that, all comparison percentages reported in this work are calculated based on `\textit{fmincon}' CPU time} 75-80 $\%$ (\textit{min.} 40 $\%$, \textit{max.} 118 $\%$) more CPU time than \textit{fmincon} (Barrier method). However, our proposed AAFS takes on average 25-30 $\%$ (\textit{min.} 20 $\%$, \textit{max.} 45 $\%$) more CPU time than \textit{fmincon}. It is evident that the proposed acceleration reduced the CPU time consumption considerably (approximately on average 50 $\%$ reduction). The reason for this is that the classical AFS is an exponential time algorithm whereas the Barrier method is a polynomial time method. When we applied the proposed generalization and acceleration in AFS to generate GAFS and AAFS method respectively, they did well against Barrier function method as both methods uses the history information (i.e., AFS uses only $x_k$ to generate $x_{k+1}$, GAFS and AAFS uses $x_0,x_1,x_2,...,x_k$ to generate $x_{k+1}$).

\subsection{Comparison among AFS, GAFS and AAFS for sparse data}
\label{subsec:2}
In this subsection, we investigated the performance behaviour of classical AFS with the proposed GAFS and AAFS methods and also with MATLAB Optimization Toolbox function \textit{fmincon} \cite{fmincon} for several \textit{Netlib} LP instances (real life examples with sparse data \cite{netlib}). The experiment parameters remains the same as in the randomized instances. Figures \ref{fig:6}, \ref{fig:7} and \ref{fig:8} show the comparison graphs among AFS, GAFS and AAFS.

In the following figures, we compared AFS, GAFS and AAFS with the same parameters for the \textit{Netlib} LPs \footnote{For some instances, we slightly modify parameters $b$ and $c$ to make the instances solvable by our setup, for instance in some cases $b$ vector had some values as infinity which our setup can't handle, we replaced infinity with very large numbers}. Based on the figures below (Figure \ref{fig:6}, \ref{fig:7} and \ref{fig:8}), we can conclude that GAFS is faster than the original AFS and AAFS further accelerates the convergence of GAFS for all of the instances. Furthermore, one can notice that the runtime graphs do not follow the same trend as before (Figure \ref{fig:1}, \ref{fig:2} and \ref{fig:3}). The main reason for that is in these instances, we have another important parameter involved called sparsity (portion of nonzero entries in the matrix, $\delta = \frac{\text{nonzero entries of} \ A}{\text{total entries of} \ A}$) of the data matrix $A$. As shown in the following figures, sparsity affected the performance of the algorithms significantly.
\vspace{-0.2 cm}
\begin{figure}[H]
\begin{subfigure}{0.32\textwidth}
\includegraphics[width=\linewidth]{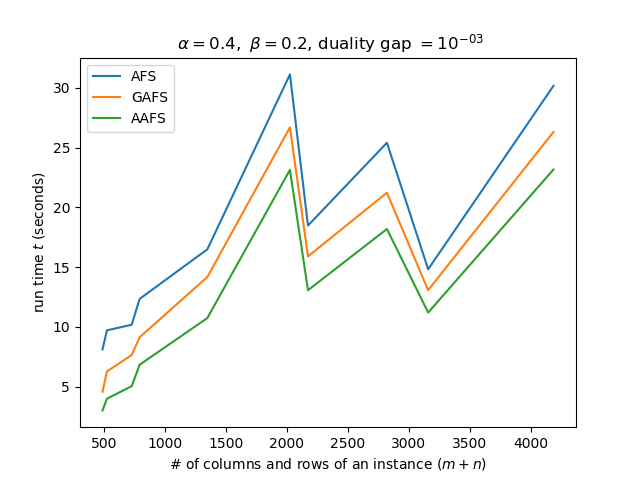}
\caption{$\alpha = 0.4, \ \beta = 0.2$}
\end{subfigure}
\hspace*{\fill}
\begin{subfigure}{0.32\textwidth}
\includegraphics[width=\linewidth]{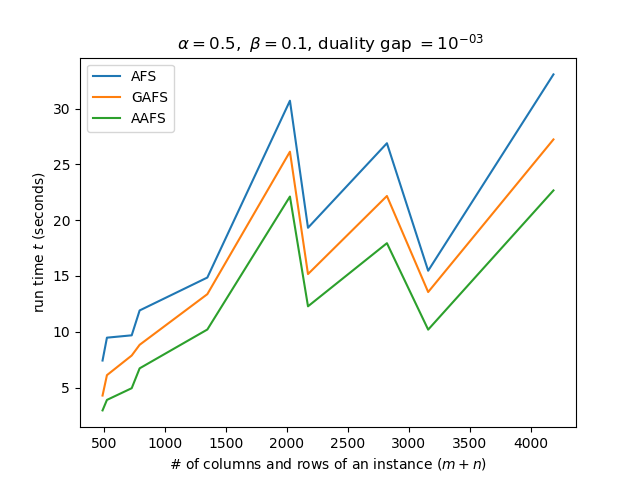}
\caption{$\alpha = 0.5, \ \beta = 0.1$}
\end{subfigure}
\hspace*{\fill}
\begin{subfigure}{0.32\textwidth}
\includegraphics[width=\linewidth]{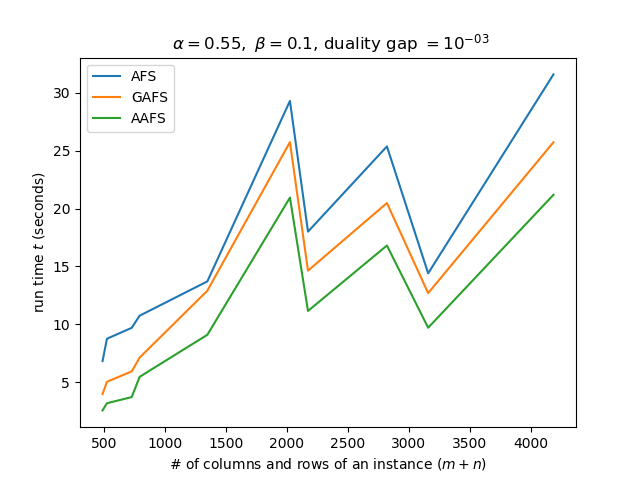}
\caption{$\alpha = 0.55, \ \beta = 0.1$}
\end{subfigure}
\caption{ Number of columns and rows ($m+n$) vs run time (duality gap $= 10^{-3}$, sparse data)} \label{fig:6}
\end{figure} 

\vspace{-0.35 cm}

\begin{figure}[H]
\begin{subfigure}{0.32\textwidth}
\includegraphics[width=\linewidth]{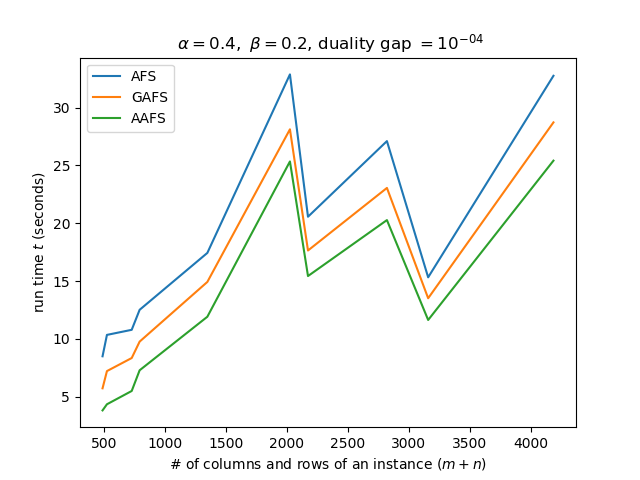}
\caption{$\alpha = 0.4, \ \beta = 0.2$}
\end{subfigure}
\hspace*{\fill}
\begin{subfigure}{0.32\textwidth}
\includegraphics[width=\linewidth]{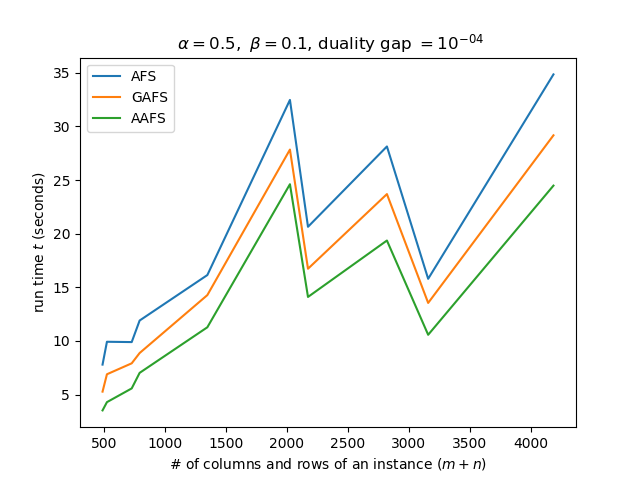}
\caption{$\alpha = 0.5, \ \beta = 0.1$}
\end{subfigure}
\hspace*{\fill}
\begin{subfigure}{0.32\textwidth}
\includegraphics[width=\linewidth]{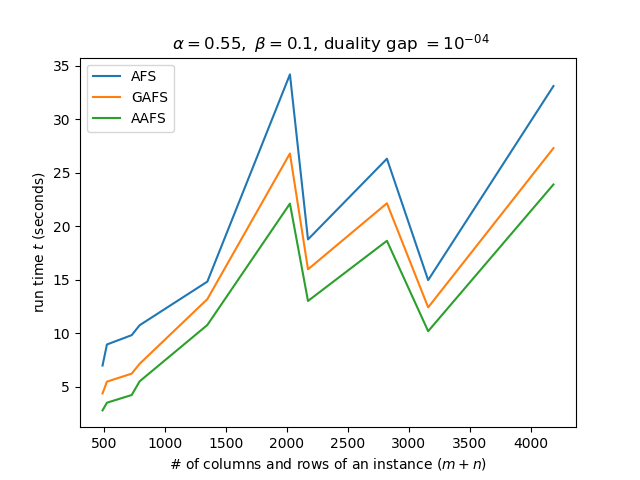}
\caption{$\alpha = 0.55, \ \beta = 0.1$}
\end{subfigure}
\caption{ Number of columns and rows ($m+n$) vs run time (duality gap $= 10^{-4}$, sparse data)} \label{fig:7}
\end{figure} 

\vspace{-0.35 cm}

\begin{figure}[H]
\begin{subfigure}{0.32\textwidth}
\includegraphics[width=\linewidth]{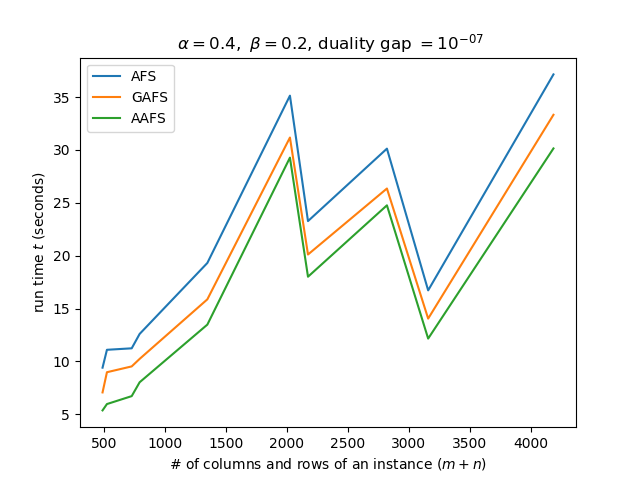}
\caption{$\alpha = 0.4, \ \beta = 0.2$}
\end{subfigure}
\hspace*{\fill}
\begin{subfigure}{0.32\textwidth}
\includegraphics[width=\linewidth]{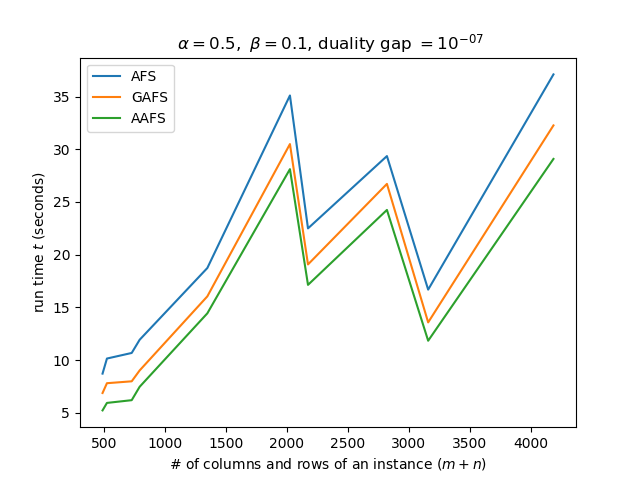}
\caption{$\alpha = 0.5, \ \beta = 0.1$}
\end{subfigure}
\hspace*{\fill}
\begin{subfigure}{0.32\textwidth}
\includegraphics[width=\linewidth]{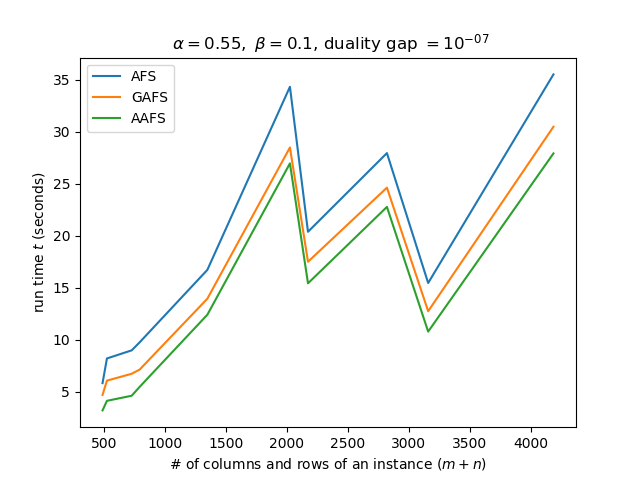}
\caption{$\alpha = 0.55, \ \beta = 0.1$}
\end{subfigure}
\caption{ Number of columns and rows ($m+n$) vs run time (duality gap $= 10^{-7}$, sparse data)} \label{fig:8}
\end{figure}

Now, we evaluated the performance of our proposed algorithms and AFS with the standard solver \textit{fmincon}. At first, we presented the comparison figure for all the algorithms (AFS, GAFS, AAFS and \textit{fmincon}). Then finally, for  better understanding, we compared our proposed AAFS with classical AFS and \textit{fmincon} (we explain in subsection \ref{subsec:1} that it is fair to compare with \textit{fmincon} as AFS, GAFS, AAFS and \textit{fmincon} uses the raw Interior Point Method whereas CPLEX uses dual simplex method \cite{cplex:h}, \cite{cplex}).
\vspace{-0.2 cm}
\begin{figure}[H]
\begin{subfigure}{0.32\textwidth}
\includegraphics[width=\linewidth]{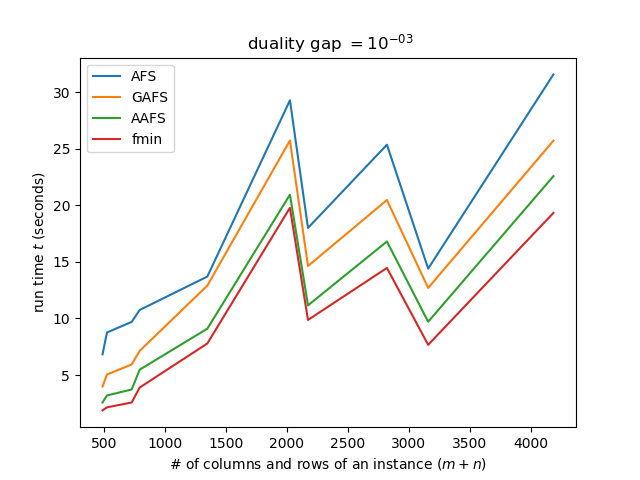}
\caption{$\text{duality gap} = 10^{-3}$}
\end{subfigure}
\hspace*{\fill}
\begin{subfigure}{0.32\textwidth}
\includegraphics[width=\linewidth]{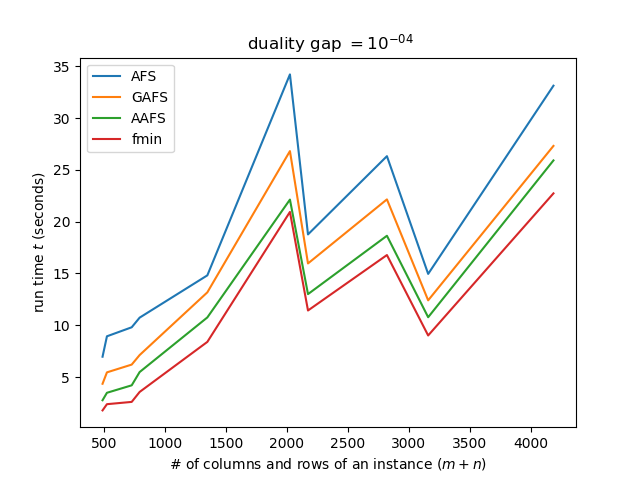}
\caption{$\text{duality gap} = 10^{-4}$}
\end{subfigure}
\hspace*{\fill}
\begin{subfigure}{0.32\textwidth}
\includegraphics[width=\linewidth]{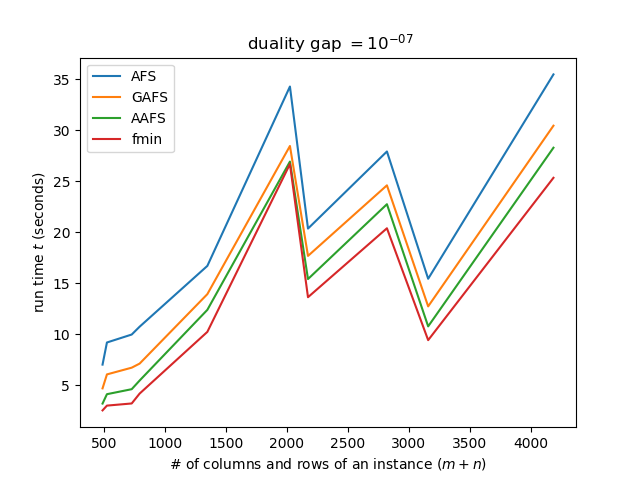}
\caption{$\text{duality gap} = 10^{-7}$}
\end{subfigure}
\caption{ Comparison of `GAFS' and `AAFS' with `AFS' and `\textit{fmincon}' (sparse data)} \label{fig:9}
\end{figure} 

\vspace{-0.2 cm}

\begin{figure}[H]
\begin{subfigure}{0.32\textwidth}
\includegraphics[width=\linewidth]{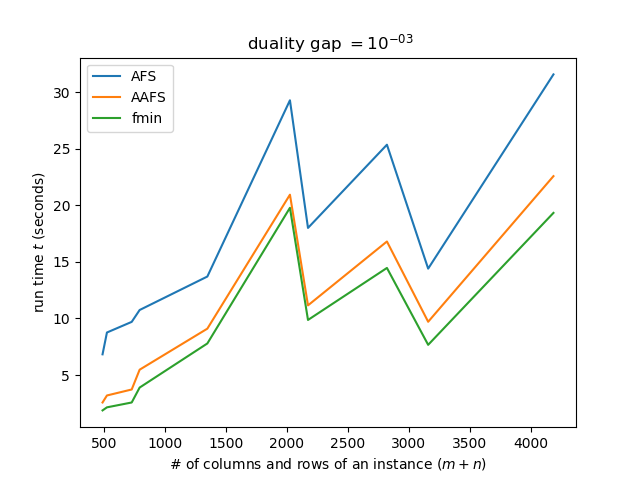}
\caption{$\text{duality gap} = 10^{-3}$}
\end{subfigure}
\hspace*{\fill}
\begin{subfigure}{0.32\textwidth}
\includegraphics[width=\linewidth]{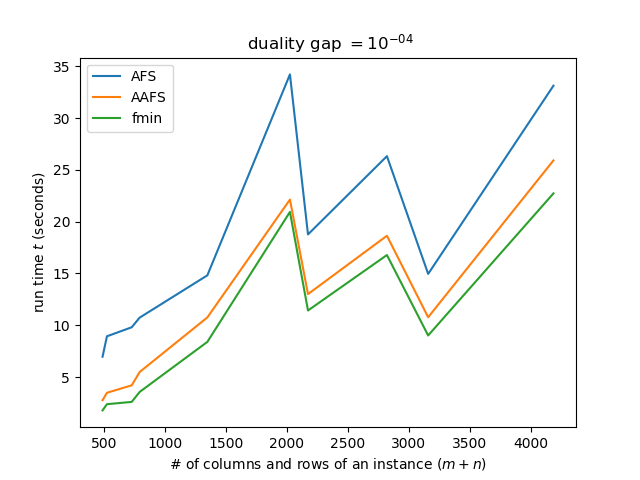}
\caption{$\text{duality gap} = 10^{-4}$}
\end{subfigure}
\hspace*{\fill}
\begin{subfigure}{0.32\textwidth}
\includegraphics[width=\linewidth]{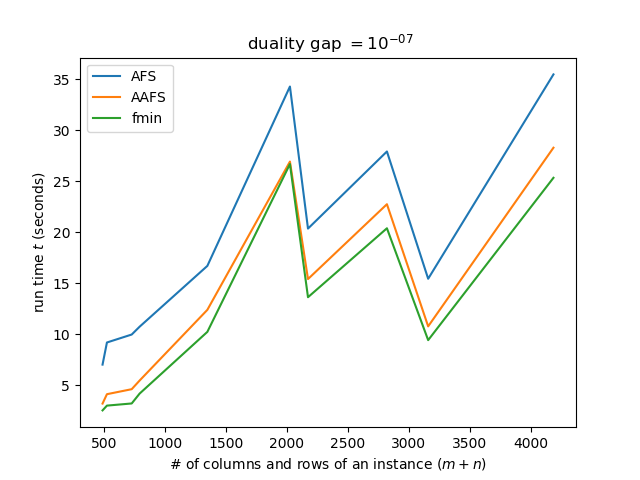}
\caption{$\text{duality gap} = 10^{-7}$}
\end{subfigure}
\caption{ Comparison of `AAFS' with `AFS' and `\textit{fmincon}' (sparse data)} \label{fig:10}
\end{figure} 

\begin{table}[H]
\small
\centering
\caption{Comparison of `GAFS' and `AAFS' with `AFS' and `\textit{fmin}$^a$}'
\label{table:2 random duality -03}
\begin{tabular}{@{}ccccccccc@{}}
\hline
Instances & \multicolumn{2}{c}{Dimension} & \begin{tabular}[c]{@{}c@{}}Nonzero\\ Entries\end{tabular} & Sparsity & AFS & GAFS & AAFS & \textit{fmin} \\ \hline
Title & $m$ & $n$ & $Z$ & $\delta$ & Time & Time & Time & Time \\
lp\_blend & 377 & 114 & 1302 & 0.0303 & 6.83 & 3.98 & 2.57 & 1.54 \\
lp\_adlittle & 389 & 138 & 1206 & 0.0225 & 8.76 & 5.04 & 3.19 & 1.97 \\
lp\_stocfor1 & 565 & 165 & 1359 & 0.0146 & 9.70 & 5.94 & 3.72 & 2.12 \\
lp\_recipe & 591 & 204 & 1871 & 0.0155 & 10.75 & 7.13 & 5.47 & 3.19 \\
lp\_brandy & 1047 & 303 & 5012 & 0.0158 & 13.71 & 12.91 & 9.1 & 7.15 \\
lp\_bandm & 1555 & 472 & 6097 & 0.0083 & 29.30 & 25.74 & 20.95 & 19.79 \\
lp\_scorpion & 1709 & 466 & 4282 & 0.0054 & 18.01 & 14.64 & 11.16 & 9.61 \\
lp\_agg & 2207 & 615 & 7085 & 0.0052 & 25.37 & 20.48 & 16.81 & 14.47 \\
lp\_degen2 & 2403 & 757 & 10387 & 0.0057 & 14.41 & 12.70 & 9.71 & 7.67 \\
lp\_finnis & 3123 & 1064 & 8052 & 0.0024 & 31.59 & 25.73 & 22.59 & 19.34 \\ \hline
\end{tabular}  \\
\footnotesize{$^a$ MATLAB Optimization Toolbox}
\end{table}
Based on the above figure (Figure \ref{fig:10}) and Table \ref{table:2 random duality -03}, we conclude that AFS takes on average 160-170 $\%$ (\textit{min.} 50 $\%$, \textit{max.} 357 $\%$) more CPU time than \textit{fmincon} (Barrier method). In comparison our proposed AAFS takes on average 30-35 $\%$ (\textit{min.} 16 $\%$, \textit{max.} 75 $\%$) more CPU time than \textit{fmincon}. It is evident that the proposed acceleration reduced the CPU time consumption considerably (approximately on average 130-135 $\%$ reduction). The reason for this is that we use the history information (i.e., $x_0, x_1, ..., x_k$ to update $x_{k+1}$) to develop GAFS and we use acceleration to GAFS to develop AAFS. By the construction of AAFS and GAFS, they should accelerate the convergence of AFS based on our convergence analysis, presented in Section \ref{sec:primal}. Another important consequence of acceleration is that the proposed AAFS is competitive with `\textit{fmincon}' for sparse instances (for some instances i.e., lp-bandm AAFS time consumption is much closer to `\textit{fmincon}' solver).

\subsection{Comparison between AFS, GAFS and AAFS for large $n$ and fixed $m$}
\label{subsec:3}

In this subsection, we considered the case where $m$ stays constant and $n$ grows exponentially. For better understanding, we consider $m = 50, 100$ and $n = 100,1000,10000,100000,1000000$ and run the experiments for duality gap $= 10^{-3}$ and $\alpha = 0.55, \beta = 0.1$. The comparison results of all algorithms (AFS, GAFS, AAFS and \textit{fmincon}) for this special instance is shown in Figure \ref{fig:11}.
\vspace{-0.25 cm}
\begin{figure}[H]
\begin{subfigure}{0.45\textwidth}
\includegraphics[width=\linewidth]{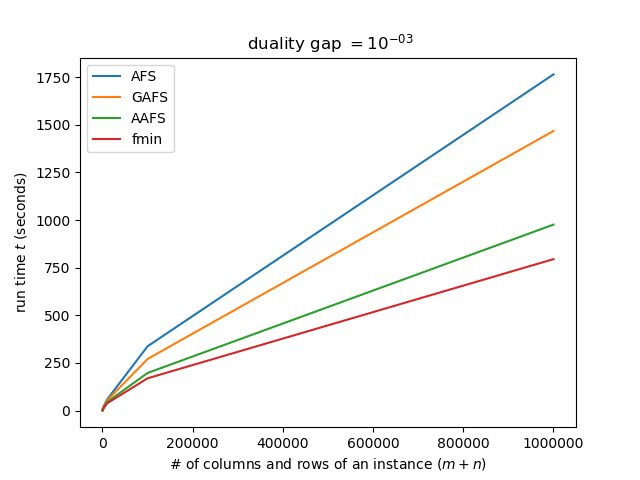}
\caption{$m = 50$}
\end{subfigure}
\hspace*{\fill}
\begin{subfigure}{0.45\textwidth}
\includegraphics[width=\linewidth]{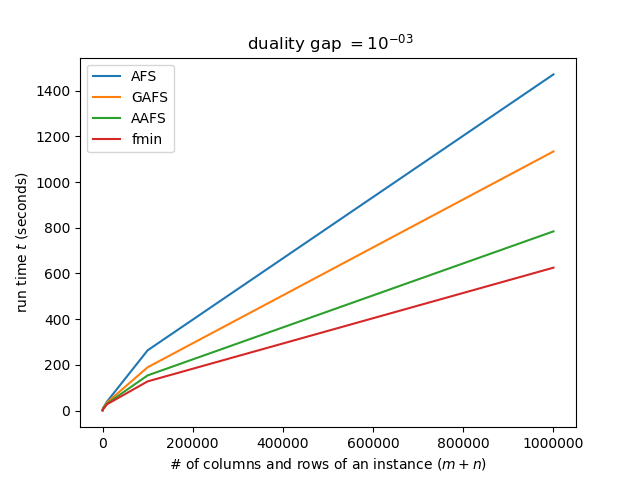}
\caption{$m = 100$}
\end{subfigure}
\caption{ Comparison of GAFS, AAFS, AFS and \textit{fmincon} while $m$ stays constant, $n$ increasing} \label{fig:11}
\end{figure} 

One interesting fact can be noted from Figure \ref{fig:11} is that the runtime graph follows the logarithmic trend as opposed to the exponential trend obtained in randomized instances. The main reason for this phenomenon is that though $n$ is increasing and total number of entries in the data-set $A$ is also increasing, the computational cost for computing the term $AX(AX^2A^T)^{-1}XA^T$ is cheap ($A \in \R^{m \times n}, \ AA^T \in \R^{m \times m}$, for small $m$, $AA^T$ inversion is cheap compared to the other cases). Furthermore, AAFS and GAFS both algorithms outperformed the classical AFS which supports the claim of this work. For a better understanding of the comparison, we plotted AFS, AAFS and \textit{fmincon} and the result is shown in Figure \ref{fig:12}. Based on Figure \ref{fig:12}, we can conclude that AFS takes on average 130-140 $\%$ (\textit{min.} 54 $\%$, \textit{max.} 240 $\%$) more CPU time than \textit{fmincon} (Barrier method). In comparison, our proposed AAFS takes on average 28-35 $\%$ (\textit{min.} 12 $\%$, \textit{max.} 52 $\%$) more CPU time than \textit{fmincon}. It is evident that the proposed acceleration reduced the CPU time consumption considerably (approximately on average 100-105 $\%$ reduction).
\vspace{-0.25 cm}
\begin{figure}[H]
\begin{subfigure}{0.45\textwidth}
\includegraphics[width=\linewidth]{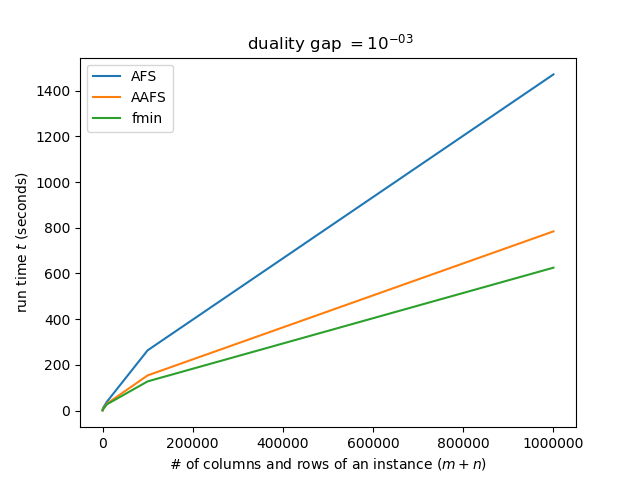}
\caption{$m = 50$}
\end{subfigure}
\hspace*{\fill}
\begin{subfigure}{0.45\textwidth}
\includegraphics[width=\linewidth]{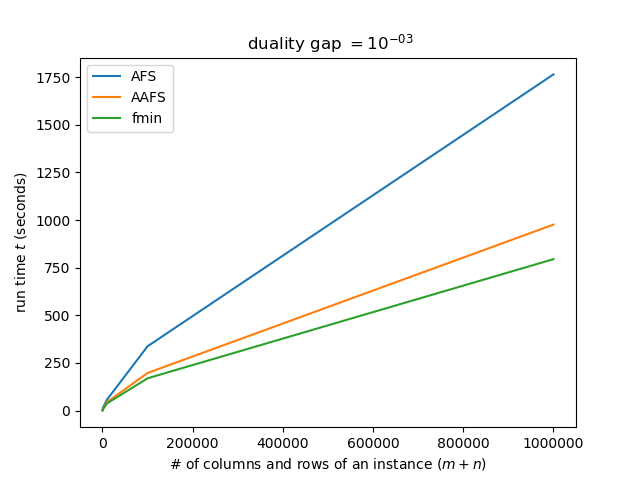}
\caption{$m = 100$}
\end{subfigure}
\caption{ Comparison of GAFS,  AFS and \textit{fmincon} while $m$ stays constant, $n$ increasing} \label{fig:12}
\end{figure} 

\noindent Our main goal of the numerical experiments is to show that the proposed GAFS and AAFS accelerate the classical AFS and support the claim proven in Section \ref{sec:primal}.  From the numerical results presented in Subsections \ref{subsec:1}, \ref{subsec:2} and \ref{subsec:3}, it is evident that the GAFS works faster than the classical AFS and AAFS outperforms AFS and GAFS for all of the instances. Apart from that, we also compared the proposed algorithms with standard LP solvers like \textit{fmincon} and CPLEX. Although the proposed GAFS and AAFS did not outperform the commercial LP solvers but in comparison with AFS, AAFS reduces the CPU time considerably (approximately on average 93-97 $\%$) for all of the instances. And almost for all of the instances, AAFS CPU time consumption is within (approximately 20-25 $\%$) of \textit{fmincon} solver. Proposed accelerated work show evidence of potential opportunity for applying acceleration to other Interior Point methods (i.e., Barrier method). Furthermore, it is a natural question to ask whether GAFS and AAFS require more computational cost compared to the original AFS for the additional acceleration effort. Since, the extra term $\frac{\beta}{\|X_k^{-1}(x_k-x_{k-1})\|_{\infty}}(x_k-x_{k-1})$ in GAFS requires only $O(n^2)$ algebraic operations and the extra term $B(x_k)$ in AAFS requires only $O(n^3)$ algebraic operations, both GAFS and AAFS require at most $O(n^3)$ algebraic operations at each iterations which make them computationally cheap. Benefits gained from the proposed accelerated techniques offset this additional computational effort. While the original AFS algorithm uses the current update to find the next update, the proposed algorithms use all the previous updates to find the next update (see Theorem \ref{theorem-1}) and thus the proposed generalized algorithm runs faster than the original algorithm.

\section{Conclusion}
\label{sec:conc}
In this work, we proposed two Affine Scaling algorithms for solving LP problems. The first algorithm (GAFS) integrated Nesterov's restarting strategy with the AFS method. Here, we introduced an additional residual term to the extrapolation step and determined the acceleration parameter $\beta$ adaptively. The proposed algorithm also generalizes the original AFS algorithm in the context of an extra parameter (i.e., the original AFS has $\beta =0$). The second algorithm (AAFS) integrated Shanks non-linear acceleration technique with the update of GAFS. Here, we introduced entry-wise SST to accelerate the process of GAFS. It is evident from our numerical experiments that the proposed AAFS and GAFS outperformed the classical AFS algorithm in comparison with standard LP solver \textit{fmincon} (AFS takes approximately more than 121-130 $\%$ of CPU time compared to \textit{fmincon} whereas in comparison AAFS takes approximately more than 20-25 $\%$). In terms of theoretical contribution, our proposed GAFS and AAFS revealed some interesting properties about the convergence characteristics of Affine Scaling method in general. Based on our analysis it is evident that the convergence criterion for AFS “$\alpha \leq \frac{2}{3}$” is a universal bound as GAFS and AAFS satisfy a more generalized bound “$\alpha + \beta \leq \frac{2}{3}$”. Finally, we believe that standard LP solvers can adapt acceleration in Barrier method (see below) for designing much more efficient solvers based on the theoretical and numerical results presented in this work.\\

\textbf{Future research:} Based on our theoretical and numerical analysis of GAFS and AAFS, it is evident that in future it is possible to discover a more general family of numerically efficient and theoretically interesting Affine Scaling methods and our work can help researchers to look for these types of methods for other IPMs. Moreover, based on our theoretical analysis, we believe these types of efficient Affine Scaling variants can also be designed for the following class of problems: Semi Definite Programming \cite{vanderbei:1999}, Nonlinear Smooth Programming \cite{wang:2009}, Linear Convex Programming \cite{cunha:2011}, Support Vector Machine \cite{maria:2011}, Linear Box Constrained Optimization \cite{wang:2014}, Nonlinear Box Constrained Optimization \cite{huang:2017}. This is due to the similarity of these methods in terms of algorithmic structure, i.e., the only difference between AFS and the above-mentioned AFS variants \cite{vanderbei:1999,wang:2009,cunha:2011,maria:2011,wang:2014,huang:2017} is the defining formulas for the sequences $\{y_k\}$ and  $\{s_k\}$ (see Algorithm \ref{alg:acc AFS} in Section \ref{sec:afs}). Furthermore, recent developments in optimization literature shows that Affine Scaling scheme is quite competitive with the state-of-the-art techniques for Linear Box Constrained Optimization, Nonlinear Box Constrained Optimization and Support Vector Machine problems \cite{wang:2014,huang:2017,maria:2011}. Our convergence analysis will enrich optimization literature and serve as a theoretical basis for applying the proposed acceleration schemes to the above-mentioned algorithms.

Finally, the numerical results and the convergence analysis suggest that acceleration can also be applied to other efficient Interior Point methods (Barrier method/Path following method and Karmarkar method). Since AFS and Barrier method follow the same scheme (only difference is that the objective function defined in the EAP problem \ref{eq:2} is different for Barrier method), the convergence analysis will also hold for Barrier method. Note that, for the Affine Scaling method, we defined the EAP problem (equation \ref{eq:2} is Section \ref{sec:afs}) as follows:
\begin{align}
\label{eq:n}
\textbf{min} \quad & w = c^Td \nonumber\\
\textbf{s.t} \ & Ad =0 \\
& \| X^{-1}d \| \leqslant \alpha  \nonumber
\end{align}
The main difference among Affine Scaling, Karmarkar method and Path following/Barrier method is that the later two methods use the following objective functions $G(x+d,s)$ and $B_\mu (x+d)$ in the EAP problems respectively,
\begin{align*}
    w = G(x+d,s) & \overset{\underset{\mathrm{def}}{}}{=} q \log s^T(x+d) - \sum\limits_{j=1}^{n} \log (d_j+x_j)- \sum\limits_{j=1}^{n} \log s_j \\
    w = B_{\mu}(x+d) & \overset{\underset{\mathrm{def}}{}}{=} c^T(x+d) - \mu \sum\limits_{j=1}^{n} \log (d_j+x_j)
\end{align*}
Here, $(x,s)$ is the primal dual pair and $q, \mu$ are parameters. Now, instead of using the original functions, we approximate the above penalty functions as follows:
\begin{align*}
    G(x+d,s) & \approx G(x,s) + \nabla^T G(x,s) d = G(x,s) + \frac{q}{s^Tx} s - X^{-1} =: w \ \\
    B_{\mu}(x+d) & \approx B_{\mu}(x) + \left(c^T- \mu e^T X^{-1}\right) d + \frac{1}{2} \mu d X^{-2}d =: w
\end{align*}
We can easily find the respective optimal solution $d^*$ for the EAP problems for the Karmarkar method and the Barrier method as follows:
\begin{align}
\label{eq:d}
    d^*_{\text{Karmarkar}} & = - \alpha \frac{Xu}{\|u\|} ; \quad u = \left(I - XA^T \left(AX^2A^T\right)^{-1} AX\right) \left(\frac{q}{s^Tx} Xs - e\right) \nonumber\\
    d^*_{Barrier}(\mu) &  = \left(I - X^2A^T \left(AX^2A^T\right)^{-1}A\right) \left(Xe - \frac{1}{\mu}X^2 c\right) 
\end{align}
Finally, considering the optimal direction vectors in \eqref{eq:d}, we can write the update formula for the primal sequences as follows:
\begin{align*}
    & \text{Karmarkar:} \quad   x_{k+1} = z_k + d^*_{\text{Karmarkar}} \\
    & \text{Barrier:} \quad   x_{k+1} = z_k + d^*_{\text{Barrier}}
\end{align*}
Here, the sequence $z_k$ is defined in \eqref{eq:3a}. Using the similar setup of this work, we can construct the update formula for the respective dual sequences.
We also believe that by designing special iterated sequences for $\alpha_k, \beta_k$ with appropriate choice of sequences $\{\pi_k\}$ and $\{\tau_k\}$, one can design efficient quadratically convergent algorithms, i.e.,
\begin{align*}
    & \alpha_{k+1} =: \alpha_k + \pi_k \\
    & \beta_{k+1} =: \beta_k + \tau_k
\end{align*}

\section{Acknowledgements}
We thank Mr. Tasnim Ibn Faiz, MIE, Northeastern University for his help and advice on programming in AMPL. We also thank Mr. Md Saiful Islam, MIE, Northeastern University for his help in the Numerical section. Finally, the authors are truly grateful to the anonymous referees and the editor for their  valuable comments and suggestions in the revision process which have greatly improved the quality of this paper.

\appendix

\section*{Appendix A}
\label{appendix-sec1}

In this section, we presented some Lemmas and their proofs which are required for the convergence analysis (most of the Lemmas are well known in the literature but they need proof in our context).

\begin{lemma}
\label{lemma-6}
For $\alpha, \beta \in Q$, the sequences $\beta_{k}$ and $\gamma_k$ defined in \eqref{eq:7} have the following properties:
\begin{enumerate}
\item There exists an $L \geqslant 1$ such that $\beta_k < 1$ for all $k \geqslant L$.
\item The sequence $\gamma_k \rightarrow 0$ as $k \rightarrow \infty$.
\end{enumerate}
\end{lemma}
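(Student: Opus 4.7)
The plan is to tackle part (2) first, then part (1). For part (2), I start from Theorem~\ref{theorem-2}(3), which shows that $\{c^Tx_k\}$ is strictly decreasing and (being bounded below by $c^Tx^*$) convergent; writing $t_k := c^Tx_{k-1} - c^Tx_k > 0$, this gives $\sum_{k=1}^\infty t_k = c^Tx_0 - c^Tx^* < \infty$ and in particular $t_k \to 0$. Taking the inner product of the update \eqref{eq:6} with $c$ and using $c^TX_k^2 s_k = \|X_ks_k\|^2$ (valid because $AX_k^2 s_k = 0$ and $s_k = c-A^Ty_k$) produces the scalar recursion
\begin{equation*}
t_{k+1} \;=\; \alpha_k\|X_ks_k\|^2 + \beta_k t_k \;\geq\; \beta_k t_k,
\end{equation*}
which iterates to $t_{k+1} \geq \beta_k\beta_{k-1}\cdots\beta_1 t_1 = \gamma_k t_1$. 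Rearranging yields $\gamma_k \leq t_{k+1}/t_1 \to 0$, which is part (2).

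For part (1), the goal is to establish a lower bound on $\rho_k := \|X_k^{-1}\delta(x_k)\|_\infty$ that strictly exceeds $\beta$. Applying the reverse triangle inequality to the identity
\begin{equation*}
X_{k-1}^{-1}\delta(x_k) \;=\; -\alpha\,\frac{X_{k-1}s_{k-1}}{\gamma(X_{k-1}s_{k-1})} + \beta\,\frac{X_{k-1}^{-1}\delta(x_{k-1})}{\|X_{k-1}^{-1}\delta(x_{k-1})\|_\infty},
\end{equation*}
together with the universal inequality $\|X_{k-1}s_{k-1}\|_\infty \geq \gamma(X_{k-1}s_{k-1})$, gives $\|X_{k-1}^{-1}\delta(x_k)\|_\infty \geq \alpha-\beta$. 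Writing $X_{k-1}^{-1}\delta(x_k) = (X_{k-1}^{-1}X_k)(X_k^{-1}\delta(x_k))$ and using the ratio bound $\|X_{k-1}^{-1}X_k\|_\infty \leq 5/3$ (implicit in the proof of Theorem~\ref{theorem-6}(1)) then produces $\rho_k \geq 3(\alpha-\beta)/5$. To lift this past $\beta$, I would iterate the recurrence one further step and invoke the condition $\beta < 1/\phi$ from the definition of $Q$ (equivalently $\beta^2+\beta < 1$), so that a two-step unfolding of $X_k^{-1}\delta(x_k)$ forces the compounded $\alpha$-push to dominate the compounded $\beta$-feedback, yielding $\rho_k > \beta$ for all $k \geq L$.

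The main obstacle is exactly this sharpening step. A one-step estimate delivers only $\rho_k \geq 3(\alpha-\beta)/5$, which may be smaller than $\beta$ (or even negative, when $\alpha<\beta$) within $Q$, so some form of second-order iteration of the recurrence is unavoidable. The golden-ratio restriction $\beta<1/\phi$ is the clue that a two-step unfolding is intended, with the algebraic identity $\phi^{-1}+\phi^{-2}=1$ providing exactly the slack needed to control the $\beta$-feedback. Along the way one must verify that $\|X_ks_k\|_\infty/\gamma(X_ks_k)$ stays bounded (so the $\alpha$-push does not degenerate) and that no pathological cancellation occurs between the two vectors on the right-hand side of the update; both checks rely on combining Theorems~\ref{theorem-3} and~\ref{theorem-6}, and constitute the delicate technical core.
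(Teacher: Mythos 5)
Your part (2) is correct and is actually a different (and self-contained) route from the paper's: you derive $\gamma_k\to 0$ directly from the scalar recursion $t_{k+1}=\alpha_k\|X_ks_k\|^2+\beta_k t_k\geqslant \beta_k t_k$, so that $\gamma_k\leqslant t_{k+1}/t_1\to 0$ by the summability of the telescoping series $\sum t_k\leqslant c^Tx_0-c^Tx^*$. The paper instead deduces part (2) as a corollary of part (1), bounding $\beta_k<\tilde\beta:=\frac{\beta^2}{1-\beta}<1$ for $k\geqslant L$ and concluding $\gamma_{k+m}<\gamma_{k-1}\tilde\beta^{\,m}\to 0$. Your version has the advantage of not requiring part (1) at all; it only needs Theorem \ref{theorem-2} and the identity $c^TX_k^2s_k=\|X_ks_k\|^2$, both of which are available without circularity.

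Part (1), however, has a genuine gap, and the two-step unfolding you propose will not close it. The normalized feedback term $\beta\,X_k^{-1}\delta(x_k)/\|X_k^{-1}\delta(x_k)\|_\infty$ has $\ell_\infty$-norm exactly $\beta$ by construction, and unfolding the recurrence once more merely re-expresses that unit-normalized vector; it does not shrink the possible cancellation against the $\alpha$-term, so every unfolding of this type returns a bound of the form $\alpha-\beta$ (up to the $X_{k-1}^{-1}X_k$ ratio constants), which is useless on the part of $Q$ where $\alpha\leqslant\beta$. The bound you actually need, $\rho_k=\|X_k^{-1}\delta(x_k)\|_\infty>\beta$, comes in the paper from an entirely different source: the coordinates indexed by $N=\{j:(x^*)_j=0\}$ decay geometrically at rate no slower than $\beta$, so that for $j\in N$ and $k$ large one has $(x_{k-1})_j/(x_k)_j\geqslant 1/\beta$, whence
\begin{equation*}
\rho_k \;\geqslant\; \Bigl|\tfrac{(x_{k-1})_j}{(x_k)_j}-1\Bigr| \;\geqslant\; \frac{1-\beta}{\beta},
\qquad\text{and hence}\qquad
\beta_k=\frac{\beta}{\rho_k}\;\leqslant\;\frac{\beta^2}{1-\beta}\;<\;1,
\end{equation*}
the last inequality being precisely where $\beta<1/\phi$ (i.e.\ $\beta^2+\beta<1$) enters. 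You correctly guessed the algebraic role of the golden ratio but attached it to the wrong mechanism: it certifies $\beta^2/(1-\beta)<1$ after the ratio bound $(x_{k-1})_j/(x_k)_j\geqslant 1/\beta$ is in hand, not a compounded push-versus-feedback estimate on the update direction. Your plan never engages with the vanishing coordinate set $N$, which is the essential ingredient; without it (or some substitute for the geometric decay claim, which the paper itself states rather tersely) the lower bound on $\rho_k$ cannot be pushed past $\beta$ on all of $Q$.
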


\begin{proof}
Let, $\lim_{k \rightarrow \infty} x_k = x^*$ and $N = \{j \ | \ (x^*)_j = 0\}$. Then, we have $\lim_{k \rightarrow \infty} (x_k)_j = 0$, for all $j \in N$. As we know, $|\beta| < 1$, there exists a $M > 0$ and $L \geqslant 1$ such that for all $k \geqslant L$, we have,
\begin{align*}
(x_{k-1})_j \ \geqslant \ M \beta^{k-1}, \  (x_{k})_j \ \leqslant \ M \beta^{k}
\end{align*}
Thus for all $j \in N$ and for all $k \geqslant L$, we have,
\begin{align*}
\|X_k^{-1} \left(x_{k-1}-x_k\right)\|_{\infty} \ \geqslant \ |\frac{(x_{k-1})_j}{(x_k)_j} -1| \ \geqslant \ |\frac{ M \beta^{k-1}}{M \beta^{k}} -1| = \frac{1- \beta}{\beta}
\end{align*}
Since, $\beta \in Q$, thus $\beta < \frac{1}{\phi}$ gives us $\frac{\beta^2}{1-\beta} < 1$, therefore, we have,
\begin{align*}
\beta_k = \frac{\beta}{\|X_k^{-1} \left(x_{k-1}-x_k\right)\|_{\infty}} \ < \ \frac{\beta^2}{1-\beta} \ < \ 1
\end{align*}
This proves part (1) of Lemma \ref{lemma-6}. For the second part, let $\tilde{\beta} = \frac{\beta^2}{1-\beta} < 1$, then it is easy to see that for all $k \geqslant L, m > 0$, we have,
\begin{align*}
\gamma_{k+m} \ = \ \prod_{i =1}^{k-1} \beta_{i}. \prod_{j =k}^{k+m} \beta_{j} \ <   \ \prod_{i =1}^{k-1} \beta_{i}. \left(\tilde{\beta}\right)^{m} \rightarrow 0 \ \text{as} \ m \rightarrow \infty
\end{align*}
This proves  $\gamma_k \rightarrow 0$ as $k \rightarrow \infty$.
\end{proof}

\begin{lemma}
\label{lemma-7} 
There exists a $L_2 \geqslant 1$ such that for all $k \geqslant L_2$,
\begin{align*}
\frac{\gamma_k}{c^Tx_k-c^Tx^*} \ < \ L_2
\end{align*} 
\end{lemma}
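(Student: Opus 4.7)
The plan is to isolate $\gamma_k$ as a multiplicative factor inside the one-step drop $c^Tx_{k-1}-c^Tx_k$, and then use monotonicity to convert this lower bound on the drop into an upper bound on $\gamma_k/(c^Tx_k-c^Tx^\ast)$. The identity from equation \eqref{eq:10} is perfectly designed for this: it expresses the one-step drop as a nonnegative sum of past contributions, each weighted by $\gamma_{k-1}/\gamma_j$, so every individual term in the sum is a legitimate lower bound.

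Concretely, I would start from
\begin{align*}
c^Tx_{k-1}-c^Tx_k \;=\; \alpha\sum_{j=0}^{k-1}\frac{\gamma_{k-1}}{\gamma_j}\frac{\|u_j\|^2}{\gamma(u_j)}\bigl(c^Tx_j-c^Tx^\ast\bigr),
\end{align*}
which follows from \eqref{eq:10} after flipping signs. All summands are nonnegative (by construction of the $\gamma_j$'s and since $c^Tx_j-c^Tx^\ast>0$), so I can drop every term except $j=0$. Using $\gamma_0=1$ and the definition $u_0=X_0s_0/(c^Tx_0-c^Tx^\ast)$, the $j=0$ contribution simplifies to $\alpha\,\gamma_{k-1}\,\|X_0s_0\|^2/\gamma(X_0s_0)$, which gives the lower bound
\begin{align*}
c^Tx_{k-1}-c^Tx_k \;\geqslant\; \alpha\,\gamma_{k-1}\,\frac{\|X_0s_0\|^2}{\gamma(X_0s_0)}.
\end{align*}

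Since $c^Tx_{k-1}-c^Tx_k\leqslant c^Tx_{k-1}-c^Tx^\ast$ by Theorem \ref{theorem-2}(3), rearranging yields
\begin{align*}
\frac{\gamma_{k-1}}{c^Tx_{k-1}-c^Tx^\ast}\;\leqslant\;\frac{\gamma(X_0s_0)}{\alpha\,\|X_0s_0\|^2}\;=:\;C,
\end{align*}
a bound independent of $k$. Choosing $L_2$ to be any integer that is at least $\max(1,C)$ (and at least $1$ so the index condition is meaningful) completes the argument: for every $k\geqslant L_2$ we have $\gamma_k/(c^Tx_k-c^Tx^\ast)<L_2$.

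The only mild obstacles are bookkeeping: verifying $\gamma_0=1$ from the empty-product convention in \eqref{eq:7}, and confirming that $u_0$ is well defined (which reduces to $c^Tx_0>c^Tx^\ast$, true by the nontriviality assumption on the LP together with $x_0$ being a strict interior point). No induction, no limit passage, and no appeal to Lemma \ref{lemma-6} is needed—the single-term lower bound extracted from \eqref{eq:10} does all the work, making the argument almost one line once the right term is singled out.
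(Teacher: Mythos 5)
Your proof is correct, but it is a genuinely different argument from the paper's. The paper writes $r_k = \gamma_k\|u_k\|/\|X_ks_k\|$, invokes the boundedness of $\{u_k\}$ from Lemma \ref{lemma-9}, asserts a uniform lower bound $\|X_ks_k\| > \epsilon_2$ for all $k$, uses $\gamma_k \to 0$ (Lemma \ref{lemma-6}) to get $\gamma_k < \epsilon_2$ eventually, and then takes a maximum over the finitely many remaining indices. You instead keep only the $j=0$ term of the nonnegative sum in \eqref{eq:10} to get $c^Tx_{k-1}-c^Tx_k \geqslant \alpha\,\gamma_{k-1}\,\|X_0s_0\|^2/\gamma(X_0s_0)$, and combine this with $c^Tx_{k-1}-c^Tx_k \leqslant c^Tx_{k-1}-c^Tx^{\ast}$ to obtain the explicit, $k$-independent constant $C = \gamma(X_0s_0)/(\alpha\|X_0s_0\|^2)$. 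Your route is more elementary and self-contained: it needs neither Lemma \ref{lemma-9} nor Lemma \ref{lemma-6}, it bounds the ratio for every $k \geqslant 0$ rather than only for large $k$, and--importantly--it sidesteps the paper's uniform lower bound on $\|X_ks_k\|$, a step that sits uneasily with Theorem \ref{theorem-3}(2), which asserts $X_ks_k \to \mathbf{0}$; your version therefore arguably repairs a weakness in the paper's own argument. The only cosmetic point is the final choice of $L_2$: since the lemma asks for a strict inequality, take $L_2$ to be any integer strictly larger than $\max(1,C)$ (your derived bound is in fact strict for $k \geqslant 2$ anyway, so this is pure bookkeeping).
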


\begin{proof}
From Lemma \ref{lemma-9}, we know that the sequence $\{u_k\}$ is bounded, it means there exists a $M_1 > 0$ such that for all $k$, we have $\|u_k\| \leqslant M_1$. Since, $\|X_ks_k\| > 0$, there exists a $\epsilon_2 > 0$ such that for all $k$, we have
$ \|X_ks_k\| > \epsilon_2 $. Similarly, as $\gamma_k \rightarrow 0 $ as $k \rightarrow \infty$, there exists a $L_3\geqslant 1$ such that for all $k \geqslant L_3$,
\begin{align*}
\gamma_k < \epsilon_2
\end{align*}
Combining these facts, for all $k \geqslant L_3$, we have,
\begin{align*}
r_k \ = \ \frac{\gamma_k}{c^Tx_k-c^Tx^*} = \frac{\gamma_k \|u_k\|}{\|X_ks_k\|} \ \leqslant \ \frac{\epsilon_2 M_1}{\epsilon_2} = M_1 
\end{align*}
Let, $ L_2 = \max\{M_1, r_1, r_2,..., r_{L_3}\} $, then for all $k$, we have,
\begin{align*}
r_k \ & \leqslant \ \max\{ r_1, r_2,..., r_{L_3}, r_{L_3+1}, ...\} \leqslant \ \max\{M_1, r_1, r_2,..., r_{L_3}\} = L_2
\end{align*}
Therefore, for all $k$ we have $r_k \ \leqslant \ L_2 $.
\end{proof}

\begin{lemma}
\label{lemma-8} 
If we define $G(k) = \sum\limits_{j = k}^{\infty} \gamma_j$, then there exists a $\bar{N} > 0$ such that for all $k \geqslant 0$ the following relation holds:
\begin{align}
\label{eq:18}
f_k \ = \ \frac{G(k)}{c^T x_k-c^Tx^*} \ \leqslant \ \bar{N}
\end{align}
\end{lemma}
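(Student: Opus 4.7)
The plan is to bootstrap from Lemma 7 (which gives a pointwise bound $\gamma_j \leqslant L_2(c^Tx_j - c^Tx^*)$) together with the linear convergence of the objective values established in Theorem \ref{theorem-7}. These two ingredients should combine naturally: Lemma 7 controls each term $\gamma_j$ by $c^Tx_j - c^Tx^*$, and linear convergence controls the tail sum of $c^Tx_j - c^Tx^*$ by the leading term.

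More concretely, I would first invoke Lemma 7 to write
\[
G(k) \ = \ \sum_{j = k}^{\infty} \gamma_j \ \leqslant \ L_2 \sum_{j = k}^{\infty} (c^T x_j - c^T x^*).
\]
Next, by part (1) of Theorem \ref{theorem-7}, there exists $L \geqslant 1$ such that for all $k \geqslant L$,
\[
\frac{c^T x_{k+1} - c^T x^*}{c^T x_k - c^T x^*} \ \leqslant \ \rho, \qquad \rho \ := \ 1 - \frac{\alpha}{\sqrt{n}} \ < \ 1.
\]
Iterating this bound yields $c^T x_j - c^T x^* \leqslant \rho^{j-k}(c^T x_k - c^T x^*)$ for all $j \geqslant k \geqslant L$, so the geometric tail sum gives
\[
G(k) \ \leqslant \ L_2 (c^T x_k - c^T x^*) \sum_{j = k}^{\infty} \rho^{j-k} \ = \ \frac{L_2}{1-\rho}\,(c^T x_k - c^T x^*),
\]
which means $f_k \leqslant L_2/(1-\rho) = L_2 \sqrt{n}/\alpha$ for all $k \geqslant L$.

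To finish, I would handle the initial segment $k < L$ separately. Since $\gamma_k \to 0$ geometrically by Lemma \ref{lemma-6}, each $G(k)$ is finite; and since $c^T x_k - c^T x^* > 0$ for all $k$ (by strict decrease, Theorem \ref{theorem-2}(3), together with convergence to $c^T x^*$), each $f_k$ is a finite positive number. Taking
\[
\bar N \ = \ \max\!\left\{\,\frac{L_2 \sqrt{n}}{\alpha},\ f_0,\ f_1,\ \ldots,\ f_{L-1}\right\}
\]
yields the uniform bound claimed in \eqref{eq:18}.

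The only real obstacle is recognizing that a pointwise ratio bound (Lemma 7) does not by itself control the tail sum ratio; one genuinely needs the geometric decay rate of $\{c^T x_k - c^T x^*\}$ from Theorem \ref{theorem-7} to guarantee the tail sum is of the same order as its leading term. Once that linear rate is available, the argument is a straightforward geometric series estimate, and the finite initial segment is absorbed into the constant.
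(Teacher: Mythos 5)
Your argument is correct, and the final bound $f_k \leqslant \max\{L_2\sqrt{n}/\alpha,\, f_0,\dots,f_{L-1}\}$ is a valid instantiation of the claim; but you reach it by a genuinely different route than the paper. The paper never touches the tail sum $\sum_{j\geqslant k}(c^Tx_j-c^Tx^*)$: it instead exploits the multiplicative structure of $\gamma_j$, writing $G(k)=\gamma_k\bigl(1+\beta_{k+1}+\beta_{k+1}\beta_{k+2}+\cdots\bigr)$, bounding the parenthetical factor by the geometric series $\sum_j \tilde{\beta}^{\,j}$ with $\tilde{\beta}=\beta^2/(1-\beta)<1$ from Lemma \ref{lemma-6}, and then applying Lemma \ref{lemma-7} a single time to the leading factor $\gamma_k/(c^Tx_k-c^Tx^*)\leqslant L_2$, giving $f_k< L_2/(1-\tilde{\beta})$ for $k\geqslant L$; the initial segment is absorbed into a max exactly as you do. You instead apply Lemma \ref{lemma-7} termwise and pay for the tail sum with the linear rate $\rho=1-\alpha/\sqrt{n}$ from Theorem \ref{theorem-7}. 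What the paper's route buys is self-containment: it needs only Lemmas \ref{lemma-6} and \ref{lemma-7}, both of which sit upstream of everything in Section \ref{sec:primal}. Your route imports Theorem \ref{theorem-7}, which is proved in Section \ref{sec:rate} via part (2) of Theorem \ref{theorem-6}, and that chain presupposes convergence of the primal iterates to $x^*$ --- while Lemma \ref{lemma-8} itself is invoked in the proof of Theorem \ref{theorem-4} to obtain the quantitative bound \eqref{eq:21}. The cycle is avoidable (only Theorem \ref{theorem-4} part (1), which does not use Lemma \ref{lemma-8}, is truly needed for Theorem \ref{theorem-6} part (2)), but you should make that dependency ordering explicit; as written, your proof leans on later and heavier machinery where the intended argument needs only the two appendix lemmas.
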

\begin{proof} From Lemma \ref{lemma-6}, we know that there exists a $L \geqslant 1$ such that for all $k \geqslant L$, $\beta_k < 1$. Let $\tilde{\beta} = \max_{j \geqslant k}\{\beta_j\}$,  then from the definition of $G(k)$ for all $k \geqslant L$, we have,
\begin{align*}
f_k = \frac{G(k)}{c^T x_k-c^Tx^*} = & \frac{\gamma_k}{c^T x_k-c^Tx^*} (1+ \beta_{k+1}+ \beta_{k+1}\beta_{k+2}+ \beta_{k+1}\beta_{k+2}\beta_{k+3} + ...) \\
& \ < \ L_2 \sum\limits_{J=0}^{\infty} (\tilde{\beta})^j \ = \ \frac{L_2}{1-\tilde{\beta}}
\end{align*}
Let, $\bar{N} = \max \{f_1, f_2,..., f_{L}, \frac{L_2}{1-\tilde{\beta}}\}$, then for all $k \geqslant 0$, we have,
\begin{align*}
f_k = \frac{G(k)}{c^T x_k-c^Tx^*} \leqslant \max \{f_1, f_2,..., f_{L}, \frac{L_2}{1-\tilde{\beta}}\} \ = \ \bar{N}
\end{align*}
This proves Lemma \ref{lemma-8}.
\end{proof}

\begin{theorem}
\label{theorem-8} 
The sequences $\{v_k\}$ and $\{h_k\}$ satisfy the following properties:
Let $\epsilon > 0$, then there exists a $L \geqslant 1$ such that for all $k  \geqslant L$,
\begin{align*}
 & (a) \quad  \|v_{k,N}\|= \sigma_k, \ \ \sum\limits_{k = L}^{\infty} \sigma_k < \infty \quad \text{and} \quad  \sum\limits_{k = L}^{\infty} \sigma_k^2 < \infty \qquad \qquad \qquad \qquad \qquad \qquad \quad \quad \quad \quad \quad \quad \\
 & (b) \quad e^Tv_{k,N} = \omega_k, \ \ \sum\limits_{k = L}^{\infty} \omega_k < \infty \\
 & (c) \quad \sum\limits_{k = L}^{\infty} h_k < \infty \\
 & (d) \quad \text{There exists a} \ \ \epsilon_1 > 0 \ \text{such that}  \ \epsilon_1 \ \leqslant \ \gamma(v_{k,N}) \ \leqslant \ \epsilon \\
 & (e) \quad \|v_k\|_{\infty} = \|v_{k,N}\|_{\infty} = \gamma(v_{k}) = \gamma(v_{k,N})
\end{align*}
\end{theorem}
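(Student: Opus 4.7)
The five claims are all size estimates on $\{v_k\}$ and $\{h_k\}$. My plan is to organize them so that the structural claim (e) is settled first and then (a)--(d) drop out of (e) combined with three results already proved: geometric decay $c^Tx_k - c^Tx^* \to 0$ from Theorem \ref{theorem-7}, summability of increments $\sum_k \|x_{k+1}-x_k\| < \infty$ from Theorem \ref{theorem-4}, and the two-sided ratio bound $3/5 \leqslant (x_{k-1})_j/(x_k)_j \leqslant 3$ together with the lower bound $\|X_k^{-1}\delta(x_k)\|_\infty > \beta$ for $k$ large, from Theorem \ref{theorem-6} and Lemma \ref{lemma-6}.

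For (e), I split on the optimal partition $B \cup N$. If $j \in B$, then $(x_k)_j \to (x^*)_j > 0$ while $(\delta(x_k))_j \to 0$ (because $\{x_k\}$ is Cauchy by Theorem \ref{theorem-4}), so $(X_k^{-1}\delta(x_k))_j \to 0$. If $j \in N$, Theorem \ref{theorem-6} keeps $(x_{k-1})_j/(x_k)_j$ in $[3/5, 3]$, so $|(X_k^{-1}\delta(x_k))_j| = |1 - (x_{k-1})_j/(x_k)_j|$ is bounded away from zero in general. Hence the $\ell^\infty$-norm is attained inside $N$ for all large $k$, which yields $\|v_k\|_\infty = \|v_{k,N}\|_\infty$; by the normalization built into $v_k$, this common value equals $c^Tx_k - c^Tx^*$. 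The equalities involving $\gamma$ require verifying that a coordinate realizing the maximum in $N$ carries a positive sign; I would push this through by examining the componentwise recursion \eqref{eq:6} for $j \in N$, where strict complementarity eventually makes the affine-scaling term dominate and fixes the sign pattern of $(\delta(x_k))_j$ at the maximizing index. This is the most delicate step and the main obstacle of the proof.

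Parts (a), (b), and (c) then reduce to routine norm comparisons and telescoping. By (e), $\sigma_k = \|v_{k,N}\| \leqslant \sqrt{p}\,\|v_{k,N}\|_\infty \leqslant \sqrt{p}(c^Tx_k - c^Tx^*)$, and Theorem \ref{theorem-7} dominates the right-hand side by a convergent geometric series, giving both $\sum \sigma_k < \infty$ and $\sum \sigma_k^2 < \infty$; Cauchy--Schwarz then yields $|\omega_k| = |e^T v_{k,N}| \leqslant \sqrt{p}\,\sigma_k$, so $\sum |\omega_k| < \infty$ for (b). For (c), combining $\|X_k^{-1}\delta(x_k)\|_\infty > \beta$ (Lemma \ref{lemma-6}) with $c^T\delta(x_k) = c^Tx_k - c^Tx_{k-1} < 0$ (Theorem \ref{theorem-2}) gives $|h_k| \leqslant (c^Tx_{k-1} - c^Tx_k)/\beta$, whose partial sums telescope to $(c^Tx_0 - c^Tx^*)/\beta < \infty$.

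For (d), the upper bound is immediate from (e) together with Theorem \ref{theorem-3}: $\gamma(v_{k,N}) \leqslant \|v_{k,N}\|_\infty = c^Tx_k - c^Tx^* \to 0$, so it falls below any prescribed $\epsilon$ eventually. The lower bound $\gamma(v_{k,N}) \geqslant \epsilon_1$ is the companion of (e)'s sign argument: once we know $\gamma(v_{k,N}) = \|v_{k,N}\|_\infty$ and that this norm is attained on $N$ with a positive maximizer, strict decrease of $\{c^Tx_k\}$ (Theorem \ref{theorem-2}) prevents the positive part from collapsing prematurely, producing the required positive lower bound on the range of $k$ under consideration. All of the heavy lifting is concentrated in the sign control inside (e) and the matching lower bound in (d); the remaining items follow by direct combinations of geometric convergence, telescoping, and the ratio bounds already in hand.
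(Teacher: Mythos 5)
Your treatment of (a), (b), (c) and the upper bound in (d) is essentially the paper's own argument: $\|v_{k,N}\|\le \sqrt{n}\,\|v_{k}\|_\infty = \sqrt{n}\,(c^Tx_k-c^Tx^*)$ followed by the geometric decay of Theorem \ref{theorem-7} gives (a) and, via $|e^Tv_{k,N}|\le \sqrt{p}\,\|v_{k,N}\|$, gives (b); and (c) is the same telescoping sum $\sum_k \frac{\beta_k}{\beta}\,(c^Tx_{k-1}-c^Tx_k)$ that uses the lower bound on $\|X_k^{-1}\delta(x_k)\|_\infty$ coming from Lemma \ref{lemma-6}. (Note the paper obtains (a) directly from $\|w\|_2\le\sqrt{n}\,\|w\|_\infty$ without first establishing (e); your detour through (e) is harmless but creates an unnecessary dependency.)

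The gap is in (e) and in the lower bound of (d), which you correctly flag as ``the main obstacle'' but then do not close. Two specific problems. First, the one concrete justification you offer for the sup norm being attained on $N$ --- that the ratio bound $3/5\le (x_{k-1})_j/(x_k)_j\le 3$ of Theorem \ref{theorem-6} keeps $|1-(x_{k-1})_j/(x_k)_j|$ ``bounded away from zero'' for $j\in N$ --- does not work: that interval contains $1$, so the bound is perfectly compatible with these components vanishing. The ingredient that actually forces attainment on $N$ is the uniform lower bound $\|X_k^{-1}\delta(x_k)\|_\infty\ge(1-\beta)/\beta>0$ established inside the proof of Lemma \ref{lemma-6}, combined with $(X_k^{-1}\delta(x_k))_j\to 0$ for $j\in B$ (which you do prove via the Cauchy property of $\{x_k\}$). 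Second, the sign control --- that a maximizing index in $N$ carries the sign needed for $\gamma(v_{k,N})=\|v_{k,N}\|_\infty$, and that $\gamma(v_{k,N})$ admits a positive lower bound $\epsilon_1$ --- is left as a plan (``examining the componentwise recursion \eqref{eq:6}\dots strict complementarity eventually makes the affine-scaling term dominate'') rather than an argument. To be fair, the paper itself disposes of this step by asserting that $(x_{k-1})_j/(x_k)_j>1$ eventually for all $j\in N$ and that the existence of $\epsilon_1$ follows from $c^Tx_k-c^Tx^*\downarrow 0$; but a proposal that defers exactly the step the paper glosses over has not actually proved (d) and (e).
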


\begin{proof}
Part (a): We know from Theorem \ref{theorem-7}, there exist a $L_1 \geqslant 1$ such that for all $k \geqslant L_1$,
\begin{align}
 c^Tx_{k}- c^Tx^* \leqslant \left(1- \frac{\alpha}{\sqrt{n}}\right) (c^Tx_{k-1}- c^Tx^*) \leqslant \left(1- \frac{\alpha}{\sqrt{n}}\right)^{k-L_1} (c^Tx_{L_1}- c^Tx^*) \label{eq:A1}
\end{align}
As a consequence of \eqref{eq:A1}, for all $k \geqslant L_1$, we have,
\begin{align*}
\sum\limits_{k = L_1}^{\infty} \sigma_k = \sum\limits_{k = L_1}^{\infty} \|v_{k,N} \| & \ \leqslant \ \sum\limits_{k = L_1}^{\infty} \frac{\|X_k^{-1}(x_{k-1}-x_k)\|}{\|X_k^{-1}(x_k-x_{k-1})\|_{\infty}}(c^Tx_k-c^Tx^*) \\
& \ \leqslant \ \sqrt{n} \sum\limits_{k = L_1}^{\infty} (c^Tx_k-c^Tx^*)\\
& \ \leqslant \ \sqrt{n} (c^Tx_{L_1}- c^Tx^*) \sum\limits_{k = L_1}^{\infty} \left(1- \frac{\alpha}{\sqrt{n}}\right)^{k-L_1} \\
& \ = \ \frac{n(c^Tx_{L_1}- c^Tx^*) }{\alpha} \ < \ \infty
\end{align*}
Using the similar process, we can show that for all $k \geqslant L_1$,
\begin{align*}
 \sum\limits_{k = L_1}^{\infty} \sigma_k^2 \ =  \  \sum\limits_{k = L_1}^{\infty} \|v_{k,N} \|^2 \ < \ \infty
\end{align*}
This proves part (a) of Theorem \ref{theorem-8}. \\

\noindent Part (b): Using \eqref{eq:A1}, for all $k \geqslant L_1$, we have,
\begin{align*}
e^Tv_{k,N} = \frac{e^TX_{k,N}^{-1}(x_{k-1,N}-x_{k,N})}{\|X_k^{-1}(x_k-x_{k-1})\|_{\infty}}(c^Tx_k-c^Tx^*) \leqslant q \sqrt{n} ( c^Tx_{k}- c^Tx^* )
\end{align*}
Then we have,
\begin{align*}
\sum\limits_{k = L_1}^{\infty} \omega_k \ = \ \sum\limits_{k = L_1}^{\infty} e^Tv_{k,N} \ \leqslant \ q \sqrt{n} \sum\limits_{k = L_1}^{\infty} ( c^Tx_{k}- c^Tx^* ) < \ \frac{nq(c^Tx_{L_1}- c^Tx^*) }{\alpha} \ < \ \infty
\end{align*}
This proves part (b) of Theorem \ref{theorem-8}. \\

\noindent Part (c): From Lemma \ref{lemma-6}, we know there exist a $L_2 \geqslant 2$ such that for all $k \geqslant L_2$, 
\begin{align*}
\beta_k = \frac{\beta}{\|X_k^{-1} \left(x_{k-1}-x_k\right)\|_{\infty}} \ < \ \frac{\beta^2}{1-\beta} \ < \ 1
\end{align*}
As a consequence of the above relation, we have,
\begin{align*}
 \sum\limits_{k = L_2}^{\infty} h_k =  \sum\limits_{k = L_2}^{\infty}  \frac{c^T(x_{k-1}-x_k)}{\|X_k^{-1}(x_k-x_{k-1})\|_{\infty}} & = \sum\limits_{k = L_2}^{\infty} \frac{\beta_k}{\beta} (c^Tx_{k-1}-c^Tx_k) \\
 & < \  \frac{\beta}{1-\beta} (c^Tx_{L_2-1}-c^Tx^*) \ < \ \infty
\end{align*}
This proves part (c) of Theorem \ref{theorem-8}. \\

\noindent Part(d): Now, let $\epsilon > 0$. Since, $\lim_{k \rightarrow \infty} (c^Tx_k -c^Tx^*) = 0$ and $c^Tx_k -c^Tx^* > 0$, there exists an $\epsilon_1 > 0, L_3 \geqslant 1$ such that for all $k \geqslant L_3$, we have
$\epsilon_1 < \gamma(v_{k,N}) < \epsilon$, which is exactly what we want for part (d) of Theorem \ref{theorem-8}. \\

\noindent Part (e): Since, from the definition, we have $\lim_{k \rightarrow \infty} \frac{(x_{k-1})_j}{(x_{k})_j} = 1$, for all $j \in B$ and also $ \frac{(x_{k-1})_j}{(x_{k})_j} > 1$, for all $j \in N$, the required result follows as there exists a $L_4 \geqslant 1$ such that for all $k \geqslant L_4$, the following relation holds,
\begin{align*}
\|v_k\|_{\infty} \ = \ \|v_{k,N}\|_{\infty} \ = \ \gamma(v_{k}) \ = \ \gamma(v_{k,N})
\end{align*}
Finally, if we choose $L = \max\{L_1, L_2,L_3,L_4\}$ and combine all of the identities, we get part (e) of Theorem \ref{theorem-8}.
\end{proof}

\section*{Appendix B}
\label{appendix-sec2}

In this section, we provided some known results and their variants for the EAP problem given in \eqref{eq:2}. We discussed some properties of the sequences $\{x_k\}, \{y_k\}, \{s_k\}, \{d_k \} =\{X_k^2s_k\}$; generated by the GAFS discussed in Section \ref{sec:afs}. We provided some required proofs here as well. These properties also hold for the original AFS. We also describe some of the properties of the solution of the EAP problem \eqref{eq:2} as per our formulation. The EAP problem \eqref{eq:2} is similar to the one well studied in the literature (see \cite{saigal:1996}). Since the generalization parameter doesn't affect the EAP problem, we introduced some properties of EAP without providing proofs as most of the proofs are available in the literature (see \cite{dikin:1991}, \cite{saigal:1996}, \cite{tsuchiya:1995}).
As shown in several works by Saigal \cite{saigal:1996}, Vanderbei \textit{et al.} \cite{vanderbei:1986} and Dikin \cite{dikin:1967}, the solution $d^*$ of the EAP problem \eqref{eq:2} satisfies the following Lemma.

\begin{lemma}
\label{lemma-1}
Assume that the rows of $A$ are linearly independent and $c$ is not a linear combination of the rows of $A$. Let $x$ and  $z $ be some positive vectors with $Ax = Az =b$. Then the optimal solution $d^*$ of \eqref{eq:2} is given by
\[d^*= - \alpha \frac{X^2(c-A^Ty)}{\|X(c-A^Ty)\|} = - \alpha\frac{X^2s}{\|Xs\|} \quad \text{with} \quad y = \left(AX^2A^T\right)^{-1}AX^2c \]
Furthermore, the vector $\bar{x} = x+ d^* + \bar{\beta} (x-z)$ satisfies $A \bar{x} =b$ and $c^T \bar{x} < c^T x$.
\end{lemma}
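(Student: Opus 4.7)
The plan is to reduce the EAP problem to a standard constrained optimization in a scaled variable and read off the closed-form minimizer via orthogonal projection onto a null space. First I would introduce the scaling $p = X^{-1} d$ (legitimate since $x>0$ makes $X$ invertible), which rewrites \eqref{eq:2} as minimizing $(Xc)^T p$ over the set $\{p : AX p = 0,\ \|p\|\le \alpha\}$. The feasible set is the intersection of the Euclidean ball of radius $\alpha$ with the null space $\mathcal{N}(AX)$; since the objective is linear, the optimum must lie on the boundary sphere, in the direction opposite the projection of $Xc$ onto $\mathcal{N}(AX)$.

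Next I would exhibit that projection explicitly. Because the rows of $A$ are linearly independent and $X$ is invertible, $AX^2A^T = (AX)(AX)^T$ is nonsingular, so the orthogonal projector onto $\mathcal{N}(AX)$ is $P = I - XA^T(AX^2A^T)^{-1}AX$. Applying $P$ to $Xc$ and defining $y = (AX^2A^T)^{-1}AX^2 c$, $s = c - A^T y$, a direct computation gives $P(Xc) = Xs$. The optimal scaled direction is therefore $p^* = -\alpha\, Xs/\|Xs\|$, and unscaling yields $d^* = Xp^* = -\alpha\, X^2 s/\|Xs\|$. The hypothesis that $c$ is not a linear combination of the rows of $A$ guarantees $s\neq 0$, hence $Xs\neq 0$, so the denominator is safely nonzero and the minimizer is unique.

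For the statement about $\bar{x}$, feasibility is immediate: $A\bar{x} = Ax + Ad^* + \bar{\beta}(Ax - Az) = b + 0 + \bar{\beta}(b-b) = b$, where $Ad^*=0$ follows from $d^* \in \mathcal{N}(A)$ (being a scalar multiple of $X \cdot Xs$ and $Xs \in \mathcal{N}(AX)$ by construction of the projector). For the cost inequality, I would compute $c^T d^* = -\alpha\, (Xc)^T(Xs)/\|Xs\|$ and then use the key identity $(Xc)^T(Xs) = (Xs)^T(Xs) + y^T(AX^2 s) = \|Xs\|^2$, where the cross term vanishes because $AX^2 s = AX^2 c - AX^2 A^T(AX^2A^T)^{-1}AX^2 c = 0$. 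This yields $c^T d^* = -\alpha\|Xs\| < 0$, and combining with the contextual assumption $c^T x \le c^T z$ gives $c^T\bar{x} = c^T x - \alpha\|Xs\| + \bar{\beta}(c^T x - c^T z) < c^T x$.

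The only nontrivial step is recognizing the projected gradient as $Xs$; once that identification is made, the rest is bookkeeping. I would expect a minor expository obstacle in making the invertibility of $AX^2A^T$ and the nonvanishing of $\|Xs\|$ transparent, since both rely on hypotheses (rank of $A$, and $c\notin \mathcal{R}(A^T)$) that are easy to overlook but essential for the formulas to be well-defined.
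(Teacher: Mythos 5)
Your proof is correct, and for the parts the paper actually argues it follows the same route: the paper likewise verifies $A\bar{x}=b$ by linearity using $Ad^*=0$ and $Ax=Az=b$, and obtains $c^T\bar{x}<c^Tx$ from the identity $c^TX^2s=\|Xs\|^2$ (its Lemma~\ref{lemma-2}, which is exactly your projector computation $P(Xc)=Xs$ with $P=P^T=P^2$) together with the standing assumption $c^Tx<c^Tz$. The one substantive difference is that the paper does not prove the optimality of $d^*$ at all --- it simply cites Vanderbei et al.\ for that claim --- whereas you supply the full argument by rescaling to $p=X^{-1}d$ and minimizing the linear objective over the ball intersected with $\mathcal{N}(AX)$. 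Your version is therefore more self-contained, and you are right to flag that the rank hypothesis on $A$ (invertibility of $AX^2A^T$) and the hypothesis $c\notin\mathcal{R}(A^T)$ (so that $Xs\neq 0$) are exactly what make the closed-form expression well defined; the paper leaves both points implicit.
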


\begin{proof}
We see that $d^*$ is the optimal solution of the EAP problem (2) (see \cite{vanderbei:1986}). Now, since $x$ and $z $ satisfy the condition, $Ax= Az = b$, then we have $A\bar{x} = Ax + Ad^* + \bar{\beta} (Ax-Az) = b$, which shows  $A\bar{x} =b$. Now for the last part, we have,
\begin{align*}
c^T \bar{x} = c^Tx - \alpha \frac{c^TX^2s}{\|Xs\|}+ \bar{\beta} (c^Tx-c^Tz) < c^Tx - \alpha \|Xs\| \ < \ c^Tx
\end{align*}
This proves the above Lemma. Here,  we use the identity $c^TX^2s = \|Xs\|^2$ (see Lemma \ref{lemma-2}).
\end{proof}

\begin{lemma}
\label{lemma-2} [Theorem 1 in \cite{saigal:1996}]
For all $k \geqslant 0$ the following identity holds:
\begin{align*}
c^Td_k = c^TX_k^2s_k = \|X_ks_k\|^2 = \|X_k^{-1}d_k\|^2
\end{align*}
\end{lemma}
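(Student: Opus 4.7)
The plan is to unpack each of the four expressions in terms of the basic definitions $d_k = X_k^2 s_k$, $s_k = c - A^T y_k$, and $y_k = (AX_k^2A^T)^{-1}AX_k^2 c$, and then reduce every identity to a single orthogonality fact, namely $AX_k^2 s_k = 0$. This orthogonality is immediate: by the definition of $y_k$ we have $AX_k^2 A^T y_k = AX_k^2 c$, so
\begin{align*}
AX_k^2 s_k \;=\; AX_k^2(c - A^T y_k) \;=\; AX_k^2 c - AX_k^2 A^T y_k \;=\; 0,
\end{align*}
which in particular says $A d_k = 0$.

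Given this, the chain of equalities is routine. The first equality $c^T d_k = c^T X_k^2 s_k$ is just the definition of $d_k$. For the middle equality I would write $c = s_k + A^T y_k$ and compute
\begin{align*}
c^T X_k^2 s_k \;=\; s_k^T X_k^2 s_k + y_k^T A X_k^2 s_k \;=\; \|X_k s_k\|^2 + 0,
\end{align*}
where the second term vanishes by the orthogonality established above. Finally, since $X_k$ is a positive diagonal matrix, $X_k^{-1} d_k = X_k^{-1}(X_k^2 s_k) = X_k s_k$, and taking norms yields $\|X_k^{-1} d_k\|^2 = \|X_k s_k\|^2$, closing the loop.

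There is no genuine obstacle here — the lemma is a bookkeeping identity whose only nontrivial ingredient is the projection relation $AX_k^2 s_k = 0$, which follows directly from the normal-equation definition of $y_k$. I would present the argument in the order: (i) derive $AX_k^2 s_k = 0$; (ii) use it to collapse $c^T X_k^2 s_k$ to $s_k^T X_k^2 s_k = \|X_k s_k\|^2$; (iii) verify $X_k^{-1} d_k = X_k s_k$ from $d_k = X_k^2 s_k$.
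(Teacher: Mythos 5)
Your proof is correct and rests on the same underlying fact as the paper's: the paper writes $X_k s_k = P_k X_k c$ for the orthogonal projector $P_k = I - X_kA^T\left(AX_k^2A^T\right)^{-1}AX_k$ and uses $P_k = P_k^T = P_k^2$ to get $c^TX_kP_kX_kc = \|P_kX_kc\|^2$, whereas you extract the equivalent orthogonality $AX_k^2s_k = 0$ directly from the normal equations and decompose $c = s_k + A^Ty_k$. The two arguments are interchangeable bookkeeping for the same projection identity, and your handling of the outer equalities ($c^Td_k = c^TX_k^2s_k$ by definition, $X_k^{-1}d_k = X_ks_k$ since $X_k$ is positive diagonal) matches what the paper leaves implicit.
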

\begin{proof}
At first, let us denote $P_k = I - X_k A^T\left(AX_k^2A^T\right)^{-1}AX_k$, it can be easily shown that $P_k$ is a projection matrix, (i.e., $P_k = P_k^T = P_k^2$). Thus, we have,
\begin{align*}
c^Td_k = c^TX_k^2s_k = & c^TX_k \left(I - X_k A^T\left(AX_k^2A^T\right)^{-1}AX_k\right)X_kc \\
& = \ c^TX_kP_kX_kc \ = \|P_kX_kc\|^2 \\
 \text{And}  \quad  P_kX_kc \ = & \ X_k \left(c-A^T\left(AX_k^2A^T\right)^{-1}AX_k^2c\right) = X_ks_k
\end{align*}
The proof is complete.

\end{proof}

\begin{lemma}
\label{lemma-3} [Theorem 4 in \cite{saigal:1996}]
For all $x > 0$, there exists a $q(A) > 0$ such that,
\begin{align*}
\|\left(AX^2A^T\right)^{-1}AX^2p\| \ \leqslant \ q(A) \|p\|
\end{align*}
\end{lemma}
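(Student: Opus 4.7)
\medskip

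\noindent\textbf{Proof plan for Lemma \ref{lemma-3}.}

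My plan is to express the linear operator $(AX^2A^T)^{-1}AX^2 : \mathbb{R}^n \to \mathbb{R}^m$ as a convex combination of inverses of the (finitely many) basic submatrices of $A$, and then bound it by the worst such inverse. Since $A$ has rank $m$, let $\mathcal{B}$ denote the finite collection of index sets $B \subset \{1,\dots,n\}$ with $|B| = m$ for which the square submatrix $A_B$ is nonsingular. By the Cauchy–Binet formula,
\[
\det(AX^2A^T) \;=\; \sum_{B \in \mathcal{B}} \det(A_B)^2 \prod_{i \in B} x_i^2,
\]
which is strictly positive for $x > 0$. I then define the weights
\[
\lambda_B(x) \;=\; \frac{\det(A_B)^2 \prod_{i \in B} x_i^2}{\det(AX^2A^T)}, \qquad B \in \mathcal{B},
\]
so that $\lambda_B(x) \geqslant 0$ and $\sum_{B \in \mathcal{B}} \lambda_B(x) = 1$.

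Next, for $p \in \mathbb{R}^n$ with subvector $p_B \in \mathbb{R}^m$ indexed by $B$, I would establish the identity
\[
(AX^2A^T)^{-1} A X^2 p \;=\; \sum_{B \in \mathcal{B}} \lambda_B(x)\, A_B^{-1} p_B.
\]
The cleanest verification is to apply Cramer's rule to the normal equations $AX^2A^T y = AX^2 p$ and expand each determinant by Cauchy–Binet; the cofactor expansions collapse to the right-hand side after the bookkeeping. Alternatively, one can check directly that the right-hand side satisfies the same normal equations (using $\sum_{B} \det(A_B)^2(\prod_{i\in B}x_i^2)\,A_B A_B^{-1} = AX^2 A^T$ after a Cauchy–Binet style expansion).

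Once this representation is in hand, the bound follows immediately: by the triangle inequality and $\|p_B\| \leqslant \|p\|$,
\[
\bigl\|(AX^2A^T)^{-1} A X^2 p\bigr\| \;\leqslant\; \sum_{B \in \mathcal{B}} \lambda_B(x)\, \|A_B^{-1}\|\, \|p_B\| \;\leqslant\; \Bigl(\max_{B \in \mathcal{B}} \|A_B^{-1}\|\Bigr)\, \|p\|,
\]
so taking $q(A) := \max_{B \in \mathcal{B}} \|A_B^{-1}\|$ finishes the argument. Because $\mathcal{B}$ is a combinatorial invariant of $A$, $q(A)$ depends only on $A$ and not on $x$.

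The main obstacle is verifying the basis representation identity. The Cramer/Cauchy–Binet calculation is standard but notationally heavy; if a shorter route is desired, one can instead argue by compactness after a change of variables that normalizes the weights $\prod_{i \in B} x_i^2$, reducing the problem to a continuous map on a compact simplex of weights and appealing to the nonsingularity of $A_B$ for at least one $B \in \mathcal{B}$ in each limiting regime. Either route yields the same constant $q(A)$, but the explicit basis formula is the more informative one.
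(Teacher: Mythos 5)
Your argument is correct, but note that the paper itself offers no proof of this lemma at all: it is simply imported by citation as Theorem~4 of Saigal's paper, with the appendix explicitly deferring to the literature. What you have written is a self-contained reconstruction of the classical argument behind that cited result: the representation
\[
(AX^2A^T)^{-1}AX^2p \;=\; \sum_{B}\lambda_B(x)\,A_B^{-1}p_B,\qquad \lambda_B(x)=\frac{\det(A_B)^2\prod_{i\in B}x_i^2}{\det(AX^2A^T)},
\]
is the well-known Dikin/Ben--Tal--Teboulle convex-combination formula for weighted least-squares solutions, and your Cauchy--Binet computation of $\det(AX^2A^T)$ and the resulting bound $q(A)=\max_B\|A_B^{-1}\|$ (using $\|p_B\|\leqslant\|p\|$ and convexity of the weights) are exactly how the uniform bound over all positive diagonal scalings is obtained in the sources the paper leans on. You correctly flag the basis-representation identity as the only nontrivial step; it is a known result and your sanity checks (Cramer's rule plus Cauchy--Binet, or direct verification of the normal equations) both go through. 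The one part I would not lean on is the ``compactness'' alternative you sketch at the end: without the basis decomposition it is not obvious that the operators $(AX^2A^T)^{-1}AX^2$ stay bounded as some coordinates of $x$ degenerate to $0$ or $\infty$, so that route needs the same combinatorial input anyway. Net effect: the paper buys brevity by citation, while your proof makes the lemma self-contained and produces an explicit, $x$-independent constant.
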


\begin{lemma}
\label{lemma-4} [Corollary 6 in \cite{saigal:1996}]
For all $x > 0$, there exists a $p(A,c) > 0$ such that, if $\bar{d}$ solves EAP (2) then the following relationship holds:
\begin{align*}
\|\bar{d}\| \ \leqslant \ p(A,c) \ c^T \bar{d} = M c^T \bar{d}
\end{align*}
\end{lemma}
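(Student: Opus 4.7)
The lemma asserts that, for the EAP-optimal direction $\bar d$ at a strictly feasible $x$, the ratio of step length $\|\bar d\|$ to linear cost improvement $c^T\bar d$ is uniformly bounded by a constant depending only on $A$ and $c$. My plan is to reduce the inequality to the two AFS identities established in Lemmas~\ref{lemma-1}--\ref{lemma-2}, and then control the remaining ratio via Lemma~\ref{lemma-3}.

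First, by Lemma~\ref{lemma-1} the EAP optimum at $x$ is $d^\star=-\alpha X^2 s/\|Xs\|$, where $s=c-A^T y$ and $y=(AX^2A^T)^{-1}AX^2 c$. Let $\bar d:=X^2 s$ be the associated unscaled direction (this is the form in which the lemma is applied in Theorem~\ref{theorem-4}); Lemma~\ref{lemma-2} then gives $c^T\bar d=\|Xs\|^2$ and $\|\bar d\|=\|X^2 s\|$, so the claim reduces to the single direction-level estimate
\[\|X^2 s\|\ \leqslant\ p(A,c)\,\|Xs\|^2.\]
Second, I invoke Lemma~\ref{lemma-3} with $p=c$ to get the uniform estimate $\|y\|\leqslant q(A)\|c\|$, hence $\|s\|\leqslant K:=(1+\|A\|\,q(A))\|c\|$ independent of $x$. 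Together with the standing assumption that the LP attains its optimum, so that the relevant portion of $\{x\geqslant 0:Ax=b\}$ is compact and $\max_i x_i\leqslant B$ for some $B=B(A,b)$, this already produces a uniform bound $\|X^2 s\|\leqslant B\|Xs\|$.

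Third, to upgrade from the \emph{linear} factor $\|Xs\|$ to the required \emph{quadratic} factor $\|Xs\|^2$, I would exploit the projection identity $Xs=P_X(Xc)$ with $P_X=I-XA^T(AX^2A^T)^{-1}AX$, so that $\bar d=X\cdot P_X(Xc)$ lies in $\ker A$. Applying Lemma~\ref{lemma-3} a second time inside the scaled subspace $\ker(AX)$ extracts the extra factor of $\|Xs\|$ needed to convert $B\|Xs\|$ into $p(A,c)\|Xs\|^2$, following the standard argument for Corollary~6 of Saigal~\cite{saigal:1996}.

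\textbf{Main obstacle.} The crucial step is precisely the quadratic upgrade in the third paragraph. A direct Cauchy--Schwarz bound only yields $\|X^2 s\|\leqslant(\max_i x_i)\|Xs\|$, which is linear in $\|Xs\|$, and near optimality $\|Xs\|\to 0$ while $\max_i x_i$ can stay bounded away from zero. Any proof must therefore exploit the coordinated complementary-slackness behaviour of the AFS iterates: large coordinates of $x$ are paired with vanishing coordinates of $s$, so the components of $Xs$ and $X^2 s$ shrink at comparable rates. Turning this intuition into a uniform bound via the projection $P_X$ and Lemma~\ref{lemma-3} is the technical heart of the proof, and is where the constant $p(A,c)$ truly ceases to depend on $x$.
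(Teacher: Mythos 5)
Your reduction is fine as far as it goes: with $\bar d=X^2s$, Lemma~\ref{lemma-2} gives $c^T\bar d=\|Xs\|^2$, so the claim is exactly the uniform bound $\|X^2s\|\leqslant p(A,c)\,\|Xs\|^2$ over all $x>0$. But the proof stops precisely where the content of the lemma lives. Your second paragraph only yields $\|X^2s\|\leqslant(\max_i x_i)\|Xs\|$, which, as you yourself note, is linear in $\|Xs\|$ and therefore useless (near optimality $\|Xs\|\to 0$ while $\max_i x_i$ need not). It also quietly changes the statement: invoking compactness of the feasible region to get $\max_i x_i\leqslant B(A,b)$ is unjustified (the paper assumes the LP has an optimal solution, not a bounded feasible set) and would make the constant depend on $b$, whereas the lemma asserts a constant $p(A,c)$ depending only on $A$ and $c$ and valid for \emph{all} $x>0$, not just feasible points or iterates. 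The third paragraph, where the quadratic upgrade is supposed to happen, is not an argument: ``applying Lemma~\ref{lemma-3} a second time inside $\ker(AX)$'' does not extract a factor of $\|Xs\|$ from anything --- Lemma~\ref{lemma-3} is a norm bound on weighted least-squares solutions and produces constants, not small factors. Your closing paragraph concedes this is the technical heart, so the proposal is an honest reduction plus a statement of the difficulty, not a proof.

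For the record, the paper itself does not prove this lemma either; it is imported verbatim as Corollary~6 of Saigal \cite{saigal:1996}, and Appendix~B explicitly declines to reprove the EAP facts. The missing idea in Saigal's argument is not complementary slackness of the iterates (the lemma is a pointwise statement about the EAP solution at an arbitrary $x>0$) but the finite decomposition underlying Lemma~\ref{lemma-3}: the weighted least-squares multiplier $y(x)$ lies in the convex hull of the finitely many basic solutions determined by nonsingular $m\times m$ submatrices of $A$, hence $s(x)$ is a convex combination of finitely many basic slack vectors, and a minimum-ratio argument over this finite family bounds $\|X^2s\|/c^TX^2s$ uniformly. Equivalently, one shows the normalized directions $X^2s/\|X^2s\|$ stay uniformly bounded away from the hyperplane $c^\perp$ inside $\ker A$. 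If you want a self-contained proof you need that combinatorial decomposition; no amount of Cauchy--Schwarz or projection bookkeeping will produce the quadratic right-hand side.
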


\begin{lemma}
\label{lemma-5} [Lemma 8 in \cite{saigal:1996}] Let $w \in \R^q$ and $0 <\lambda < 1$ be such that $w_j \leqslant \lambda$ then
\begin{align*}
\sum\limits_{i =1}^{q} \log(1-w_i) \geqslant -e^Tw- \frac{\|w\|^2}{2(1-\lambda)}
\end{align*}
\end{lemma}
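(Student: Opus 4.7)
The plan is to reduce the sum inequality to a one-dimensional inequality that I prove pointwise for each coordinate, then sum. Concretely, I would establish that for any scalar $w \leqslant \lambda < 1$,
\[
\log(1-w) \ \geqslant \ -w - \frac{w^2}{2(1-\lambda)},
\]
and summing this inequality over the coordinates $w_1, \ldots, w_q$ reproduces exactly the claimed bound, since $\sum_i w_i = e^Tw$ and $\sum_i w_i^2 = \|w\|^2$. So the entire content of the lemma is the scalar statement.

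For the scalar inequality, I would introduce the auxiliary function
\[
g(x) \ \overset{\underset{\mathrm{def}}{}}{=} \ \log(1-x) + x + \frac{x^2}{2(1-\lambda)}, \qquad x \in (-\infty, \lambda],
\]
and argue $g \geqslant 0$ by elementary calculus. A direct computation gives
\[
g'(x) \ = \ -\frac{1}{1-x} + 1 + \frac{x}{1-\lambda} \ = \ -\frac{x}{1-x} + \frac{x}{1-\lambda} \ = \ \frac{x(\lambda - x)}{(1-\lambda)(1-x)}.
\]
Because $\lambda < 1$, the denominator is strictly positive throughout $(-\infty, \lambda]$, so $g'$ has the same sign as $x(\lambda - x)$. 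Thus $g' \leqslant 0$ on $(-\infty, 0]$ and $g' \geqslant 0$ on $[0, \lambda]$, identifying $x = 0$ as the global minimum. Since $g(0) = 0$, we obtain $g(x) \geqslant 0$ on the whole interval, which is the desired scalar inequality.

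The only subtle point — and what I would flag as the main thing to get right — is ensuring the bound holds uniformly for both signs of $w_i$. The familiar Taylor-series argument $-\log(1-x) - x = \sum_{k \geqslant 2} x^k/k$ combined with termwise comparison against $\lambda^{k-2}$ only works cleanly when $x \geqslant 0$, because otherwise the terms alternate in sign. The monotonicity argument above has the advantage of handling $x < 0$ and $0 \leqslant x \leqslant \lambda$ simultaneously with no case split, which makes it the cleanest route. After that, summing the per-coordinate inequality is immediate and completes the proof.
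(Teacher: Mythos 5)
Your proof is correct. Note that the paper itself does not prove this lemma at all --- it is stated in Appendix B purely as a citation to Lemma 8 of Saigal's paper --- so you have supplied a self-contained argument where the authors deferred to the literature. Your reduction to the scalar inequality and the monotonicity analysis of $g(x) = \log(1-x) + x + \tfrac{x^2}{2(1-\lambda)}$ are both accurate: the derivative computation $g'(x) = \tfrac{x(\lambda-x)}{(1-\lambda)(1-x)}$ is right, the sign analysis correctly identifies $x=0$ as the global minimum on $(-\infty,\lambda]$, and $g(0)=0$ closes the argument. Your remark about the two signs is also well taken, and in fact understates the issue slightly: the standard Taylor-series route is not merely awkward for $w_i < 0$ but breaks down entirely when $w_i \leqslant -1$, since the series for $\log(1-w_i)$ then fails to converge, whereas the hypothesis $w_j \leqslant \lambda$ places no lower bound on the coordinates. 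The calculus argument covers the full range $x < 1$ uniformly, which is exactly what the lemma requires.
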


\begin{lemma}
\label{lemma-9} 
It is a very well known Lemma in the literature for AFS methods (see Theorem 13 in \cite{saigal:1996}, Lemma 3.8 and 3.11 in \cite{tsuchiya:1995}). The sequence $\{u_k\}$ has the following properties:
\begin{enumerate}
\item It is bounded
\item There exists a $L \geqslant 1$ such that for all $k > L$,
\begin{align*}
 & (a) \ \|u_k\|^2 = \|u_{k,N}\|^2+\epsilon_k, \ \ \sum\limits_{k = L}^{\infty} |\epsilon_k| < \infty \quad \quad \qquad \qquad \qquad \qquad \qquad \qquad \quad \quad \quad \quad \quad \quad \\
 & (b) \ e^Tu_{k,N} = 1+ \delta_k, \ \ \sum\limits_{k = L}^{\infty} |\delta_k| < \infty \\
 & (c) \ \frac{1}{\alpha} \ \geqslant \ \gamma(u_{k,N}) \ \geqslant \ \frac{1}{2p} \vspace{- 0.15 cm}\\
 & (d) \ \gamma(u_{k})  \ = \ \gamma(u_{k,N}) 
\end{align*}
\end{enumerate}
\end{lemma}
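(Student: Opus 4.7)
My approach follows the well-developed arguments of Saigal~\cite{saigal:1996} and Tsuchiya~\cite{tsuchiya:1995} for the classical AFS, since the generalization parameter $\beta$ enters only through the acceleration step and does not affect the structural identities satisfied by $\{u_k\}$. The starting point is the projection representation $X_k s_k = P_k X_k c$, where $P_k = I - X_k A^T(A X_k^2 A^T)^{-1} A X_k$ is the orthogonal projection onto $\ker(A X_k)$. Fixing any dual-optimal pair $(y^*, s^*)$ --- which by KKT complementary slackness with $x^*$ necessarily satisfies $s^*_B = 0$ --- and using $c = A^T y^* + s^*$ together with $P_k X_k A^T = 0$, one obtains $X_k s_k = P_k X_k s^*$. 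Since $s^*_B = 0$, the vector $X_k s^*$ has only its $N$-block nonzero, giving $\|X_k s_k\| \leq \|X_{k,N}\|_\infty \|s^*_N\|$. Combined with Theorem~\ref{theorem-5}'s inequality $c^T x_k - c^T x^* \geq \delta \|X_{k,N}\|_\infty$, this establishes the uniform boundedness of $\{u_k\}$ in part (1).

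For parts (a), (b), and (d) the plan is to exploit the explicit formulas $y_k - y^* = (A X_k^2 A^T)^{-1} A_N X_{k,N}^2 s^*_N$ and $s_{k,B} = -A_B^T (y_k - y^*)$, which together yield $\|s_{k,B}\| \leq K \|X_{k,N}\|_\infty^2$ once the operator norm of $A_B^T (A X_k^2 A^T)^{-1} A_N$ is controlled. Theorem~\ref{theorem-5} then upgrades this to $\|s_{k,B}\| \leq K'(c^T x_k - c^T x^*)^2$, and Theorem~\ref{theorem-7} forces geometric (hence summable) decay of the latter. The algebraic identity $c^T x_k - c^T x^* = s_k^T(x_k - x^*)$ (from $A x_k = A x^* = b$) together with $x^*_N = 0$ and $s^*_B = 0$ produces
$e^T u_{k,N} - 1 = \frac{s_{k,B}^T(x^*_B - x_{k,B})}{c^T x_k - c^T x^*} =: \delta_k$,
so $|\delta_k| \leq \|s_{k,B}\| \cdot \|x_{k,B} - x^*_B\|/(c^T x_k - c^T x^*)$, which is summable by Theorem~\ref{theorem-5} combined with the geometric bound on $\|s_{k,B}\|$. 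Summability of $\epsilon_k := \|u_{k,B}\|^2 \leq C\|s_{k,B}\|^2/(c^T x_k - c^T x^*)^2$ follows in the same way, proving (a). The same decay forces $(u_k)_j \to 0$ for every $j \in B$, which combined with the lower bound proved below yields (d) for $k$ large enough.

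For part (c), the lower bound is immediate from (b): $e^T u_{k,N} = 1 + \delta_k \geq 1/2$ for large $k$, and summing $p$ entries gives $p\,\gamma(u_{k,N}) \geq e^T u_{k,N}$, hence $\gamma(u_{k,N}) \geq 1/(2p)$. For the upper bound, I use feasibility of the next iterate: since $c^T x_{k+1} \geq c^T x^*$ and the $\beta$-contribution in update~\eqref{eq:5} is negative, dropping it yields $\alpha \|X_k s_k\|^2 / \gamma(X_k s_k) \leq c^T x_k - c^T x^*$, i.e., $\alpha \|u_k\|^2 \leq \gamma(u_k)$. Combining with $\|u_k\|^2 \geq \gamma(u_k)^2$ gives $\gamma(u_k) \leq 1/\alpha$, and the trivial inequality $\gamma(u_{k,N}) \leq \gamma(u_k)$ completes the bound.

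The main obstacle is the uniform control of $\|A_B^T (A X_k^2 A^T)^{-1} A_N\|$ that underlies the geometric decay of $\|s_{k,B}\|$. When the primal optimum is non-degenerate ($|B| = m$) this is straightforward because $A_B X_{k,B}^2 A_B^T$ is uniformly invertible. In the degenerate case $|B| < m$ the matrix $A X_k^2 A^T$ becomes ill-conditioned as $k \to \infty$, but a careful block decomposition (as in~\cite{tsuchiya:1995}) shows that the singular directions are precisely those annihilated by the $X_{k,N}^2$ factor appearing in $y_k - y^*$, so the two degeneracies cancel and the full-rank hypothesis on $A$ is invoked only here.
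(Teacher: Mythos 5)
Your proposal attempts a real proof, whereas the paper itself does not prove this lemma at all: it simply observes that the GAFS direction solves the same EAP \eqref{eq:2} as classical AFS and defers to Theorem 13 of \cite{saigal:1996} and Lemmas 3.8 and 3.11 of \cite{tsuchiya:1995}. Measured on its own terms, your sketch has two substantive problems. First, the boundedness argument for part (1) is circular in this paper's logical order and assumes too much. You invoke Theorem \ref{theorem-5}, but Theorem \ref{theorem-5} rests on Theorem \ref{theorem-4}, which rests on Lemmas \ref{lemma-8} and \ref{lemma-7}, and Lemma \ref{lemma-7} explicitly uses the boundedness of $\{u_k\}$ from this very lemma. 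Moreover, picking a dual-optimal pair with $s^*_B=0$ requires complementary slackness between a dual optimum and the limit point $x^*$, i.e.\ it presupposes that $x^*$ is primal optimal, which is only established in Theorem \ref{theorem-10}, downstream of Lemma \ref{lemma-9}. None of this is needed: since the $\beta$-term in \eqref{eq:5} only decreases the objective further and $\gamma(X_ks_k)\leqslant\|X_ks_k\|$,
\begin{align*}
c^Tx_k-c^Tx^* \ \geqslant\ c^Tx_k-c^Tx_{k+1} \ \geqslant\ \alpha\,\frac{\|X_ks_k\|^2}{\gamma(X_ks_k)} \ \geqslant\ \alpha\,\|X_ks_k\|,
\end{align*}
so $\|u_k\|\leqslant 1/\alpha$ directly --- this is the same one-line computation you already use for the upper bound in part (c), so part (1) follows from your own argument there without any dual object.

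Second, for parts 2(a) and 2(b) the entire technical content is the quadratic decay $\|s_{k,B}\|=O(\|X_{k,N}\|_\infty^2)$ in the degenerate case $|B|<m$, where $AX_k^2A^T$ becomes ill-conditioned. You correctly identify this as ``the main obstacle'' and then defer it to ``a careful block decomposition as in \cite{tsuchiya:1995}'' --- but that decomposition \emph{is} the lemma being proved; Lemma \ref{lemma-3} only gives $\|y_k-y^*\|=O(1)$, not the quadratic rate. So the proposal reduces the statement to the same external results the paper cites, with the surrounding reductions filled in. Those reductions are correct: the identity $e^Tu_{k,N}-1=s_{k,B}^T(x_B^*-x_{k,B})/(c^Tx_k-c^Tx^*)$, the summability via the geometric decay of Theorem \ref{theorem-7}, and the derivation of (c) and (d) from (a) and (b) all check out. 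In summary, parts (1) and 2(c) are fully handled (more simply than you realized), while parts 2(a), 2(b), 2(d) remain open at exactly the step where the degeneracy analysis of \cite{tsuchiya:1995} must be imported.
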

\begin{proof}
The proof is similar to the one for AFS method given by Saigal \cite{saigal:1996}, as the direction $\frac{X_k^2s_k}{\|X_ks_k\|}$ generated by GAFS algorithm also satisfies the EAP problem defined in equation \eqref{eq:2} (for a detailed proof see Tsuchiya \textit{et al.} \cite{tsuchiya:1995}).
\end{proof}

\begin{lemma}{(Lemma 15 in \cite{saigal:1996})}
\label{lemma-11}
If the analytic center defined by the solution of problem \eqref{eq:40} exists, it is unique.
\end{lemma}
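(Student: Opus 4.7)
\textbf{Proof plan for Lemma \ref{lemma-11}.} The plan is the standard strict-concavity argument adapted to the dual-face analytic center. First I would observe that the feasible set $D \cap \{s_N > 0\}$ of problem \eqref{eq:40} is convex, since $D$ is an affine subspace (defined by the linear equalities $A_B^T y = c_B$, $A_N^T y + s_N = c_N$, $s_B = 0$) and the open orthant $\{s_N > 0\}$ is convex. Next, I would note that the objective $\Phi(s_N) = \sum_{j \in N} \log s_j$ is \emph{strictly} concave on $\{s_N > 0\}$ because $\log$ is strictly concave on $(0, \infty)$ and the sum inherits strict concavity coordinatewise.

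Then I would argue uniqueness of the optimal $s_N^*$ by contradiction: if $(y^1, s^1)$ and $(y^2, s^2)$ were two distinct maximizers with $s^1_N \neq s^2_N$, then by strict concavity the midpoint $\tfrac{1}{2}(s^1_N + s^2_N)$ (which is feasible, being the $s_N$-component of the midpoint of the two full feasible points) would give a strictly larger objective value, contradicting optimality. Hence the optimal $s_N^*$ is uniquely determined, and therefore so is the full optimal slack $s^* = (0, s_N^*)$.

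The last and only delicate step is upgrading uniqueness of $s^*$ to uniqueness of $y^*$. From $s^1_N = s^2_N$ and the constraint $A_N^T y^i + s_N^i = c_N$, I would deduce $A_N^T(y^1 - y^2) = 0$; combined with $A_B^T(y^1 - y^2) = A_B^T y^1 - A_B^T y^2 = c_B - c_B = 0$, this gives $A^T(y^1 - y^2) = 0$. Invoking the standing assumption that $A$ has rank $m$ (so $A^T$ has trivial null space in $\mathbb{R}^m$), one concludes $y^1 = y^2$, which together with $s^1 = s^2$ yields uniqueness of $(y^*, s^*)$. The potential obstacle here is precisely the rank hypothesis on $A$: without full row rank of $A$, uniqueness of $s^*$ would not force uniqueness of $y^*$, and one would only obtain uniqueness up to the kernel of $A^T$. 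Since the paper has assumed rank$(A) = m$ in Section \ref{sec:primal}, this obstacle dissolves, and the argument closes in three to four lines.
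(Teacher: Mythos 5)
Your argument is correct and complete. The paper itself offers no proof of this lemma---it is stated as a direct citation of Lemma 15 in \cite{saigal:1996}---and the strict-concavity argument you give is precisely the standard one used there: convexity of the feasible set of \eqref{eq:40}, strict concavity of $\sum_{j\in N}\log s_j$ forcing uniqueness of $s_N^*$, and then the rank-$m$ assumption on $A$ (so that $A^T$ has trivial kernel) to upgrade uniqueness of $s^*$ to uniqueness of $y^*$. You have correctly identified the rank hypothesis as the one point where the argument could fail, and it is indeed among the paper's standing assumptions in Section~\ref{sec:primal}.
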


\bibliographystyle{ieeetr}
\bibliography{aafs}

\end{document}